\newcounter{itemcounter}
\numberwithin{itemcounter}{section}
\newtheorem{thm}[itemcounter]{Theorem}
\newtheorem{lem}[itemcounter]{Lemma}
\newtheorem{defi}[itemcounter]{Definition}
\newtheorem{prop}[itemcounter]{Proposition}
\newtheorem{cor}[itemcounter]{Corollary}
\newtheorem{rem}[itemcounter]{Remark}
\newtheorem*{thm*}{Theorem}
\newtheorem*{con*}{Conjecture}
\newtheorem*{cor*}{Corollary}
\newtheorem*{ack*}{Acknowledgements}
\newcommand{\Res}{\mathop{\rm Res}\nolimits}
\newcommand{\Ind}{\mathop{\rm Ind}\nolimits}
\newcommand{\Inf}{\mathop{\rm Inf}\nolimits}
\newcommand{\Syl}{\mathop{\rm Syl}\nolimits}
\newcommand{\Irr}{\mathop{\rm Irr}\nolimits}
\newcommand{\IBr}{\mathop{\rm IBr}\nolimits}
\newcommand{\Hom}{\mathop{\rm Hom}\nolimits}
\newcommand{\Aut}{\mathop{\rm Aut}\nolimits}
\newcommand{\Ext}{\mathop{\rm Ext}\nolimits}
\newcommand{\Out}{\mathop{\rm Out}\nolimits}
\newcommand{\Inn}{\mathop{\rm Inn}\nolimits}
\newcommand{\Stab}{\mathop{\rm Stab}\nolimits}
\newcommand{\Br}{\mathop{\rm Br}\nolimits}
\newcommand{\Pic}{\mathop{\rm Pic}\nolimits}
\newcommand{\nth}{\mathop{\rm th}\nolimits}
\newcommand{\Id}{\mathop{\rm Id}\nolimits}
\newcommand{\rad}{\mathop{\rm rad}\nolimits}
\newcommand{\CF}{\mathop{\rm CF}\nolimits}
\newcommand{\prj}{\mathop{\rm prj}\nolimits}
\newcommand{\Perf}{\mathop{\rm Perf}\nolimits}
\newcommand{\Picent}{\mathop{\rm Picent}\nolimits}
\newcommand{\cT} {\mathcal{T}}
\newcommand{\cL} {\mathcal{L}}
\newcommand{\cO} {\mathcal{O}}
\newcommand{\cF} {\mathcal{F}}
\newcommand{\ZZ} {\mathbb{Z}}
\newcommand{\al}{\alpha}
\title{Some examples of Picard groups of blocks \footnote{This research was supported by the EPSRC (grant no. EP/M015548/1).} }
\author{Charles W. Eaton\footnote{School of Mathematics, University of Manchester, Manchester, M13 9PL, United Kingdom. Email: charles.eaton@manchester.ac.uk} and Michael Livesey\footnote{Friedrich-Schiller-Universit\"{a}t Jena, Fakult\"{a}t f\"{u}r Mathematik und Informatik, Institut f\"{u}r Mathematik, 07737 Jena, Germany. Email: michael.livesey@uni-jena.de}}
\date{22nd May 2019}
\begin{document}

\maketitle

\begin{abstract}
We calculate examples of Picard groups for $2$-blocks with abelian defect groups with respect to a complete discrete valuation ring. These include all blocks with abelian $2$-groups of $2$-rank at most three with the exception of the principal block of $J_1$. In particular this shows directly that all such Picard groups are finite and $\Picent$, the group of Morita auto-equivalences fixing the centre, is trivial. These are amongst the first calculations of this kind. Further we prove some general results concerning Picard groups of blocks with normal defect groups as well as some other cases.
\end{abstract}

\section{Introduction}

Let $\cO$ be a complete discrete valuation ring with $k:=\cO / J(\cO)$ algebraically closed of prime characteristic $p$. Let $K$ be the field of fractions of $\cO$, of characteristic zero. Let $G$ be a finite group and $B$ be a block of $\cO G$. We always assume that $K$ contains all $|G|^{\nth}$ roots of unity. The Picard group $\Pic(B)$ of $B$ consists of isomorphism classes of $B$-$B$-bimodules which induce $\cO$-linear Morita auto-equivalences of $B$. For $B$-$B$-bimodules $M$ and $N$, the group multiplication is given by $M \otimes_B N$. As yet relatively few examples have been calculated and there are many open questions regarding their structure, as raised in~\cite{bkl18}. One is that, whilst the Picard group of a $k$-block is usually infinite, it is not clear whether the Picard group of an $\cO$-block of a finite group must be finite. A related question is whether every element of $\Pic(B)$ can be taken to be a bimodule with endopermutation source. There are also no known examples of blocks $B$ where the subgroup $\Picent(B)$ of $\Pic(B)$ inducing the identity map on $Z(B)$ is nontrivial.

The purpose of this article is to find the Picard groups of some classes of examples, both to provide evidence for the main open questions, but also as tools for the classification of Morita equivalence classes. Previous examples where the Picard groups have been calculated are the blocks with cyclic or Klein four defect groups, and blocks of groups $P \rtimes E$ where $P$ is an abelian $p$-group with $E$ abelian and $[P,E]=P$, all in~\cite{bkl18}. Further it follows from~\cite{hk02} that $\Pic(B)$ is finite when $B$ is the unique block of a finite group $G$ with a self-centralizing normal $p$-subgroup.

In~\cite{el2018} and~\cite{wzz17} the Morita equivalence classes of $2$-blocks with abelian defect group of $2$-rank at most three were classified. Our examples of Picard groups include those for representatives of each Morita equivalence class of such blocks (except the principal block of $J_1$). Further we give general results in the case that the defect group of a block is abelian and normal.

Following~\cite{bkl18}, the groups $\cL(B)$ and $\cT(B)$ are the subgroups of $\Pic(B)$ of bimodules with linear and trivial sources respectively. For $n\geq1$, we denote by $G_n$ the group $(C_{2^n} \times C_{2^n})\rtimes C_3$, where the action of $C_3$ is given by permuting $a$, $b$ and $(ab)^{-1}$, where $a$ and $b$ are generators for the two cyclic factors. In particular, $G_1 \cong A_4$. Our main results are Theorem~\ref{thm:PicNorm} and Propositions~\ref{prop:sl2} and~\ref{Aut(SL_2(8)):prop}, which we summarise here.

\begin{thm}
\label{maintheorem}
Let $P$ be a finite abelian $2$-group.
\begin{enumerate}[(i)]
\item If $B=\cO (P\times G_n)$, where $n \geq 1$, then $\Pic(B) = \cL(B) \cong (P \rtimes \Aut(P)) \times(C_3\rtimes \Out(G_n))$;
\item If $B=\cO (G_{n_1} \times G_{n_2})$, where $n_1,n_2 \geq 1$, then $\Pic(B)=\cT(\cO (B)\cong$
\begin{enumerate}[(i)]
\item $(C_3\rtimes\Out(G_{n_1}))\wr S_2$ if $n_1=n_2$,
\item $(C_3\rtimes\Out(G_{n_1}))\times(C_3\rtimes \Out(G_{n_2}))$, if $n_1\neq n_1$;
\end{enumerate}
\item If $B=B_0(\cO (P \times A_5))$, then $\Pic(B) = \cL(B) \cong (P \rtimes \Aut(P))\times C_2$;
\item If $B=B_0(\cO (G_n \times A_5))$, where $n \geq 1$, then $\Pic(B) = \cT(B) \cong (C_3\rtimes\Out(G_n)) \times C_2$;
\item If $B=B_0(\cO ((C_{2^n})^3 \rtimes C_7))$, where $n \geq 1$, then $\Pic(B)=\cT(B) \cong C_7 \rtimes \Out((C_{2^n})^3 \rtimes C_7)$;
\item If $B=\cO ((C_{2^n})^3 \rtimes (C_7 \rtimes C_3)))$, where $n \geq 1$, then $\Pic(B) = \cT (B) \cong C_3 \rtimes \Out((C_{2^n})^3 \rtimes (C_7 \rtimes C_3))$;
\item If $B = B_0(\cO SL_2(2^n))$, where $n \geq 2$, then $\Pic(B) = \cT (B) \cong C_n$;
\item If $B = B_0(\cO \Aut(SL_2(8)))$, then $\Pic(B) = \cT(B) \cong C_3$.
\end{enumerate}

In each case $\Picent(B)=1$.
\end{thm}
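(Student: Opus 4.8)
This statement repackages Theorem~\ref{thm:PicNorm} and Propositions~\ref{prop:sl2} and~\ref{Aut(SL_2(8)):prop} together with the behaviour of Picard groups under tensor products, so the plan is to explain how each of (i)--(viii) is extracted and then to treat the $\Picent$ claim uniformly.

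Cases (i), (ii), (v) and (vi) have abelian normal defect group $D$, namely $P\times(C_{2^n})^2$, $(C_{2^{n_1}})^2\times(C_{2^{n_2}})^2$, $(C_{2^n})^3$ and $(C_{2^n})^3$, and here one reads off the inertial quotient $E$ (respectively $C_3$, $C_3\times C_3$, $C_7$, $C_7\rtimes C_3$), the group $\widehat{D/[D,E]}\times\widehat{E}$ of linear characters of $D\rtimes E$, which is the ambient group itself in each of these cases, and $\Out(D\rtimes E)$, and then applies Theorem~\ref{thm:PicNorm}. Since $E$ acts fixed-point-freely on the cyclic factors of order $2^n$, the group $\widehat{D/[D,E]}$ is trivial in (ii), (v) and (vi) but equals $\widehat P\cong P$ in (i); accordingly every bimodule has trivial source except in (i), where the subgroup $\widehat P\rtimes\Aut(P)\cong P\rtimes\Aut(P)$ consists of bimodules of genuinely linear source, so that $\Pic(B)=\cL(B)$ there and $\cT(B)$ in the other three. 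The only real work is the identifications $\Out(P\times G_n)=\Aut(P)\times\Out(G_n)$ and $\Out(G_{n_1}\times G_{n_2})=\Out(G_{n_1})\wr S_2$ or $\Out(G_{n_1})\times\Out(G_{n_2})$ according as $n_1=n_2$ or not; these hold because $G_n$ has no non-trivial abelian direct factor and $G_{n_1}\cong G_{n_2}$ precisely when $n_1=n_2$, and feeding them in yields exactly the stated groups.

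Cases (vii) and (viii) are Propositions~\ref{prop:sl2} and~\ref{Aut(SL_2(8)):prop}. For (iii) and (iv), where $V_4$ is not normal in $A_5$, I would instead use that $B=B_0(\cO H)\otimes_\cO B_0(\cO A_5)$ with $H=P$ or $H=G_n$, together with the following tensor-product formula: if $B_1$ and $B_2$ are blocks with abelian defect groups whose Picard groups consist of bimodules with endopermutation source, then $\Pic(B_1\otimes_\cO B_2)\cong\Pic(B_1)\times\Pic(B_2)$, unless $B_1$ and $B_2$ are Morita equivalent, in which case one appends a swap and gets $\Pic(B_1)\wr S_2$. To prove this, observe that a Morita self-equivalence bimodule of $B_1\otimes B_2$ has endopermutation source, hence vertex a twisted diagonal $\Delta\varphi(D_1\times D_2)$ with $\varphi$ an automorphism of the product fusion system $\cF_1\times\cF_2$; such a $\varphi$ decomposes into isomorphisms among the indecomposable factors, but in our situation no factor of $\cF_1$ can be matched, via an actual Morita equivalence, with one of $\cF_2$ --- the trivial cyclic factors of $\cF_{\cO P}$ match nothing non-trivial, $\cF_{G_n}$ and $\cF_{B_0(\cO A_5)}$ sit on $2$-groups of different orders unless $n=1$, and for $n=1$ the isomorphic fusion systems $\cF_{A_4}\cong\cF_{A_5}$ do not arise from a Morita equivalence of $\cO A_4$ with $B_0(\cO A_5)$ --- so $\varphi$ preserves the splitting $D_1\times D_2$, the bimodule and its central character then split accordingly, and the residual cohomological obstruction vanishes because the groups involved are abelian. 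Inserting $\Pic(\cO P)\cong P\rtimes\Aut(P)$, $\Pic(B_0(\cO G_n))\cong C_3\rtimes\Out(G_n)$ (the case $P=1$ of (i)) and $\Pic(B_0(\cO A_5))\cong C_2$ from~\cite{bkl18} then gives (iii) and (iv), with source type $\cL$ exactly when the $\cO P$-factor is present.

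Finally $\Picent(B)=1$ in every case. Since $\Picent(B)$ is the kernel of the natural map $\Pic(B)\to\Aut(Z(B))$ and the structure of $\Pic(B)$ is now known, it suffices to check that each non-trivial element moves some element of $Z(B)$. A twist by a non-trivial linear character $\lambda$ of $G$ lying in $B$ permutes the primitive idempotents $e_\chi$ of $KZ(B)$ by $\chi\mapsto\chi\lambda$; an element arising from $\alpha\in\Out(G)$, or from a field automorphism in (vii) and (viii), acts on $Z(B)$ through $\alpha$, which moves a conjugacy-class sum because the outer automorphisms occurring here are not class-preserving --- this is the one genuinely group-by-group verification, for instance the graph automorphism of $SL_2(2^n)$ permutes semisimple classes non-trivially --- and a swap, in (ii) or in the wreath factor when $n_1=n_2$, exchanges the two copies of $Z(B_1)$ inside $Z(B_1)\otimes Z(B_1)$ and so is non-trivial since $Z(B_1)\neq\cO$. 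The main obstacle throughout is the tensor-product formula, and within it the exclusion of Morita self-equivalences of $B_1\otimes B_2$ mixing the two factors, together with confirming that the endopermutation-source hypothesis it needs is available for the blocks in question; everything else is assembly of results already established.
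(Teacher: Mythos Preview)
Your handling of (i), (ii), (v), (vi), (vii), (viii) and of $\Picent$ is fine: these are literally the content of Theorem~\ref{thm:PicNorm}(a,c,e,f) and Propositions~\ref{prop:sl2}, \ref{Aut(SL_2(8)):prop}, and the paper also verifies $\Picent(B)=1$ case by case by checking the action on $\Irr(B)$.

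The problem is with (iii) and (iv). First, you seem to have overlooked that these too are parts of Theorem~\ref{thm:PicNorm}, namely (b) and (d); no separate argument is required. Second, and more seriously, the tensor-product formula you propose has a genuine gap. Your hypothesis is that $\Pic(B_1)$ and $\Pic(B_2)$ consist of endopermutation-source bimodules, and from this you want to ``observe that a Morita self-equivalence bimodule of $B_1\otimes B_2$ has endopermutation source''. But that implication is exactly what is not known: there is no a~priori reason why an arbitrary element of $\Pic(B_1\otimes_\cO B_2)$ should decompose as, or even be source-comparable to, a tensor of elements of $\Pic(B_1)$ and $\Pic(B_2)$. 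Indeed, whether every element of $\Pic(B)$ has endopermutation source was explicitly an open problem at the time (see the introduction), so you cannot simply assume it for $B_1\otimes B_2$. Without that, your vertex/fusion-system argument never gets started, and the subsequent ``residual cohomological obstruction'' step is too vague to repair it.

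The paper's route for (iii) and (iv) avoids this entirely. It does not try to split an arbitrary $M\in\Pic(B)$ along the tensor factors. Instead it uses the perfect-isometry computations of Section~\ref{perfect_isometries} (Theorem~\ref{thm:PA4} for $P\times A_5$ via the Brou\'e isometry $\cO A_4\simeq B_0(\cO A_5)$, and Proposition~\ref{prop:GA5} for $G_n\times A_5$) to show that, after adjusting by an explicit element of $\Aut(\cO P)$, any $M$ permutes $\Irr(B)^Q$ for $Q=P$ or $Q=O_2(G_n)$. Proposition~\ref{prop:indmor} then produces ${}^QM\in\Pic(B^Q)$, which lies in $\cT(B^Q)$ by the already-known cases, and Weiss' criterion (Proposition~\ref{Weiss_consequence:prop}) forces $M\in\cT(B)$. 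Only after this is established does the paper use the factorisation $\Out_D(A)\cong\Out_{D_1}(A_1)\times\Out_{D_2}(A_2)$ of Lemma~\ref{factorise_source_algebra:lemma}, together with the exact sequences~(\ref{arr:exact}), to read off the structure of $\cT(B)$. So the decomposition across the tensor factors is used only at the level of $\Out_D(A)$ inside $\cT(B)$, not for the full Picard group.
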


\begin{rem}
Note that $\Aut(C_{2^n} \times C_{2^n}) \cong GL_2(\ZZ/2^n\ZZ) \cong O_2(GL_2(\ZZ/2^n\ZZ)) \rtimes S_3$. By Lemma \ref{lem:OutOut} we have $\Out(G_n) \cong N_{\Aut(O_2(G_n))}(C_3)/C_3$, so $\Out(G_n)$ is a $2$-group. 

Also $\Aut((C_{2^n})^3) \cong GL_3(\ZZ/2^n\ZZ) \cong O_2(GL_3(\ZZ/2^n\ZZ)) \rtimes GL_3(2)$. Now $GL_3(2)$ has a unique conjugacy class of subgroups $C_7 \rtimes C_3$, so it follows from the Schur-Zassenhaus theorem that $\Aut((C_{2^n})^3)$ also does. This uniquely defines $(C_{2^n})^3 \rtimes (C_7 \rtimes C_3)$ up to isomorphism. Similarly $(C_{2^n})^3 \rtimes C_7$ is uniquely defined.

By Lemma \ref{lem:OutOut} $\Out((C_{2^n})^3 \rtimes (C_7 \rtimes C_3)) \cong N_{GL_3(\ZZ/2^n\ZZ)}(C_7 \rtimes C_3)/(C_7 \rtimes C_3)$, which is a $2$-group, and $(C_{2^n})^3 \rtimes C_7 \cong N_{GL_3(\ZZ/2^n\ZZ)}(C_7)/(C_7)$, which is a $\{2,3\}$-group with Sylow $3$-subgroup of order three.
\end{rem}

\begin{cor}
\label{rank3corollary}
Let $G$ be a finite group and $B$ a $2$-block of $\cO G$ with abelian defect group $D$ of $2$-rank at most three, with the exception of $G=J_1$. Then the isomorphism type of $\Pic(B)$ is determined and $\Pic(B) \cong \Pic(C)=\cL(C)$ for some block $C$ (of $\cO H$ for some finite group $H$) Morita equivalent to $B$ with isomorphic defect group. In particular, $|\Pic(B)|$ is bounded in terms of $D$. Further $\Picent(B)=1$.
\end{cor}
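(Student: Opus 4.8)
The plan is to reduce Corollary~\ref{rank3corollary} to Theorem~\ref{maintheorem} (together with Theorem~\ref{thm:PicNorm} and~\cite{bkl18}) via the classification of Morita equivalence classes, using that the Picard group is a Morita invariant. Concretely, I would first record: if $C$ is Morita equivalent to $B$ through a $(C,B)$-bimodule $M$ with quasi-inverse $M^{\vee}$, then $[N]\mapsto[M\otimes_{B}N\otimes_{B}M^{\vee}]$ is a group isomorphism $\Pic(B)\xrightarrow{\sim}\Pic(C)$, and since the Morita equivalence induces a canonical ring isomorphism $Z(B)\cong Z(C)$ under which the two actions of the Picard groups on the centres correspond, this restricts to $\Picent(B)\xrightarrow{\sim}\Picent(C)$. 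Hence it suffices to prove the assertions for a single representative of each Morita equivalence class of $2$-blocks with abelian defect group of $2$-rank at most three, excluding $B_0(\cO J_1)$.

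Next I would invoke the classifications of~\cite{el2018} and~\cite{wzz17}: for each isomorphism type of such a defect group $D$ there are only finitely many Morita equivalence classes, and for each there is an explicit representative block, constructed so that its defect group is isomorphic to $D$. I would then run through the list. When the inertial quotient is trivial the block is nilpotent, hence Morita equivalent to $\cO D$, and Theorem~\ref{thm:PicNorm} applies (with $D$ normal) to give $\Pic(\cO D)=\cL(\cO D)\cong D\rtimes\Aut(D)$ with $\Picent(\cO D)=1$. When $D$ is cyclic or Klein four the block is Morita equivalent to one treated in~\cite{bkl18}, where $\Pic$ is shown to be finite, equal to $\cL$, and to have trivial $\Picent$. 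In every remaining case the inertial quotient is forced — using the facts about the odd-order subgroups of $GL_{2}(\ZZ/2^{n}\ZZ)$ and $GL_{3}(\ZZ/2^{n}\ZZ)$ recorded in the remark following Theorem~\ref{maintheorem} — to be $C_{3}$, $C_{7}$ or $C_{7}\rtimes C_{3}$ acting in an essentially unique fashion, so that $D\cong P\times(C_{2^{n}})^{2}$ with $P$ cyclic, or $D\cong(C_{2^{n}})^{3}$, and the representative is, after specialising parameters ($P$ trivial or cyclic, $n=3$ in the $SL_2(8)$ cases), one of the blocks appearing in parts~(i), (iii), (v), (vi), (vii), (viii) of Theorem~\ref{maintheorem}; for each of these Theorem~\ref{maintheorem} gives $\Pic=\cL$ and $\Picent=1$. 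Assembling this yields $\Pic(B)\cong\Pic(C)=\cL(C)$ for a block $C$ Morita equivalent to $B$ with defect group $\cong D$, and $\Picent(B)=1$; and since for fixed $D$ there are finitely many Morita equivalence classes, each with a fixed finite Picard group, $|\Pic(B)|$ is bounded by a quantity depending only on $D$.

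The main obstacle I anticipate is precisely the bookkeeping in this last step: verifying that the representatives listed in~\cite{el2018, wzz17} exhaust exactly the cases above with the stated defect groups, and in particular checking that the parts of Theorem~\ref{maintheorem} not invoked here — parts~(ii) and~(iv), and parts~(i) and~(iii) with $\rk(P)\geq 2$ — all have defect groups of $2$-rank at least four and so play no role in this corollary. No genuinely new input beyond the cited results and Theorem~\ref{maintheorem} should be required.
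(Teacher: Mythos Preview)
Your approach is correct and is essentially the paper's own: reduce via Morita invariance of $\Pic$ and $\Picent$ to the explicit list of representatives from~\cite{el2018,wzz17}, then read off the Picard groups from the results collected in Theorem~\ref{maintheorem} (equivalently Theorem~\ref{thm:PicNorm} and Propositions~\ref{prop:sl2},~\ref{Aut(SL_2(8)):prop}). One small correction: the case $B\sim\cO D$ is not covered by Theorem~\ref{thm:PicNorm} but by Lemma~\ref{lem:perfP}(a), and you should note separately that $\Picent(\cO D)=1$ because $\cO D$ is commutative so $Z(\cO D)=\cO D$ and $\Pic(\cO D)=\Out(\cO D)$ acts faithfully on it.
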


\begin{rem}
Florian Eisele, in an as yet unpublished preprint, has shown that $\Pic(B_0(\cO J_1))=1$.
\end{rem}

It is observed in~\cite{eis18} that derived equivalence preserves finiteness of Picard groups. In particular, as by~\cite{el2018} Brou\'e's conjecture holds for $2$-blocks with abelian defect groups of $2$-rank at most three, finiteness can also be seen for such blocks by using Proposition \ref{finiteness:prop}.

The structure of the paper is as follows. In Section \ref{Picard_background} we define Picard groups and certain distinguished subgroups, and give some background results from~\cite{bkl18}. The groups of perfect self-isometries of blocks play a major role in much of this paper, and Section \ref{perfect_isometries} contains the relevant definitions and calculations. In Section \ref{normal_defect} we apply the results of the previous section together with Weiss' criterion to prove most of Theorem \ref{maintheorem}. In Section \ref{rank3} we calculate the Picard groups in the final cases of Theorem \ref{maintheorem}.

\section{Picard groups of blocks}
\label{Picard_background}

The following is based on~\cite{bkl18}. For further detail we also recommend~\cite{li18, li18b}. Let $G$ be a finite group and $B$ be a block of $\cO G$ with defect group $D$. Let $\cF$ be the fusion system for $B$ on $D$, defined using a maximal $B$-subpair $(D,b_D)$. Write $E=N_G(D,b_D)/DC_G(D)$, the inertial quotient. Write $\Aut(D,\cF)$ for the subgroup of $\Aut(D)$ of automorphisms stabilizing $\cF$. Write $\Out(D,\cF)=\Aut(D,\cF)/\Aut_\cF(D)$. Note that if $B$ is the principal block and $D$ is abelian, then $\Out(D,\cF) \cong N_{\Aut(D)}(\Aut_G(D))/\Aut_G(D)$, where $\Aut_G(D)$ is the subgroup of $\Aut(D)$ induced by conjugation in $N_G(D)$. 

A special case that will be useful later is the following:

\begin{lem}\label{lem:OutOut}
Let $D$ be an abelian $p$-group and $E$ a $p'$-subgroup of $\Aut(D)$. Write $G=D \rtimes E$ and let $\cF$ the fusion system associated to $\cO G$. Then $\Out(D,\cF)\cong N_{\Aut(D)}(E)/E\cong \Out(G)$.
\end{lem}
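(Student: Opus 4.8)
The plan is to establish the two isomorphisms separately, with $N_{\Aut(D)}(E)$ as the common intermediary, where throughout we identify $E$ with its (faithful) image in $\Aut(D)$.

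First I would pin down the fusion system. Since $E$ is a $p'$-group, $D=O_p(G)$ is a normal abelian Sylow $p$-subgroup of $G$, so the fusion system is $\cF=\cF_D(G)$. As $D\trianglelefteq G$ we get $\Aut_\cF(D)=N_G(D)/C_G(D)=G/C_G(D)$, and since $D$ is abelian and $E$ acts faithfully on it, $C_G(D)=D$; hence $\Aut_\cF(D)=G/D$, which is exactly $E\le\Aut(D)$. Next, because $D$ is abelian, every subgroup $Q\le D$ satisfies $N_D(Q)=D$ and is fully normalised, so the extension axiom shows that every morphism in $\cF$ is the restriction of an element of $\Aut_\cF(D)=E$; thus $\cF$ is completely determined by the subgroup $E\le\Aut(D)$, and for $\varphi\in\Aut(D)$ the conjugate fusion system ${}^{\varphi}\cF$ has $\Aut_{{}^{\varphi}\cF}(D)=\varphi E\varphi^{-1}$. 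Therefore $\varphi$ stabilises $\cF$ precisely when $\varphi\in N_{\Aut(D)}(E)$, i.e. $\Aut(D,\cF)=N_{\Aut(D)}(E)$, and dividing out $\Aut_\cF(D)=E$ gives $\Out(D,\cF)\cong N_{\Aut(D)}(E)/E$.

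For the identification with $\Out(G)$, note that $D=O_p(G)$ is characteristic in $G$, so restriction gives a homomorphism $\rho\colon\Aut(G)\to\Aut(D)$, and I would show (a) $\operatorname{im}\rho=N_{\Aut(D)}(E)$, (b) $\ker\rho\le\Inn(G)$, (c) $\rho(\Inn(G))=E$. For (a): given $\alpha\in\Aut(G)$, the complement $\alpha(E)$ to $D$ is $D$-conjugate to $E$ by Schur--Zassenhaus, and since conjugation by $D$ is trivial on the abelian group $D$, replacing $\alpha$ by a suitable $c_d\alpha$ does not change $\alpha|_D$ while arranging $\alpha(E)=E$; then applying $\alpha$ to $exe^{-1}=e(x)$ (for $e\in E$, $x\in D$) forces $\alpha|_D\circ e\circ(\alpha|_D)^{-1}\in E$, so $\alpha|_D\in N_{\Aut(D)}(E)$, and conversely any $\varphi\in N_{\Aut(D)}(E)$ together with the induced automorphism of $E$ assembles into an automorphism of $D\rtimes E$ restricting to $\varphi$. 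For (b): an automorphism fixing $D$ pointwise is of the form $e\mapsto d_e\,e$ with $(d_e)_{e\in E}\in Z^1(E,D)$, and the inner automorphisms induced by $C_G(D)=D$ realise exactly the coboundaries $B^1(E,D)$; since $\gcd(|E|,|D|)=1$ we have $H^1(E,D)=0$, so $Z^1(E,D)=B^1(E,D)$ and $\ker\rho\le\Inn(G)$. For (c): $\rho(c_{de})=e$ for $d\in D$, $e\in E$, so $\rho(\Inn(G))=E$. Combining these, $\rho^{-1}(E)=\Inn(G)$ and $\rho$ is onto $N_{\Aut(D)}(E)$, whence $\Out(G)=\Aut(G)/\Inn(G)\cong N_{\Aut(D)}(E)/E$, matching the group found above.

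The only genuine work lies in parts (b) and (c) of the second isomorphism: the bookkeeping identifying $\ker\rho$ with $Z^1(E,D)$ and its inner part with $B^1(E,D)$, where the coprimality hypothesis is used essentially; everything else is formal once one sees that $\cF$ is controlled by the single subgroup $E\le\Aut(D)$. Alternatively, the description of $\Out(D\rtimes E)$ for $D$ abelian and $E$ a $p'$-group of automorphisms is classical and could simply be quoted.
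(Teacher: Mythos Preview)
Your proof is correct. The paper handles the first isomorphism in one line by appealing to the definitions (essentially what you spell out: $\Aut_\cF(D)=E$ since $D$ is abelian and $E$ acts faithfully, so $\Aut(D,\cF)=N_{\Aut(D)}(E)$). For the second isomorphism the paper goes in the opposite direction to you: it builds the natural map $H/G\to\Out(G)$ where $H=D\rtimes N_{\Aut(D)}(E)=N_{D\rtimes\Aut(D)}(G)$, gets injectivity from $C_H(D)=D$, and for surjectivity shows directly that any $\psi\in\Aut(G)$ fixing $D$ pointwise is inner by first using Schur--Zassenhaus to conjugate $\psi(E)$ back to $E$ by some $n\in D$, and then observing that $\tau_n^{-1}\psi(g)\in Dg\cap E=\{g\}$ for $g\in E$. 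Your route via the restriction map $\rho\colon\Aut(G)\to\Aut(D)$ and the identification $\ker\rho\cong Z^1(E,D)$, $\ker\rho\cap\Inn(G)\cong B^1(E,D)$, with $H^1(E,D)=0$ by coprimality, is the cohomological packaging of exactly the same content; the paper's elementwise argument is what you get if you unwind $H^1=0$ by hand. Either approach is fine, and yours has the advantage of making the role of the coprimality hypothesis transparent.
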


\begin{proof}
That $\Out(D,\cF)\cong N_{\Aut(D)}(E)/E$ follows from the definition and discussion in Section \ref{Picard_background}. Write $H=D \rtimes N_{\Aut(D)}(E)=N_{D \rtimes Aut(D)}(G)$. The natural homomorphism $\varphi:H \rightarrow \Aut(G)$ gives rise to a homomorphism $H/G \rightarrow \Out(G)$. Since $C_H(D)=D$ this must be injective. Now let $\alpha \in \Aut(G)$. Then $\alpha$ induces an automorphism, say $\beta$, of $D$ by restriction. Now $\varphi(\beta)\alpha^{-1} \in \Aut(G)$ fixes $D$ pointwise. Surjectivity will follow once we have shown that any $\psi \in \Aut(G)$ that fixes $D$ pointwise must be in $\Inn(G)$. By the Schur-Zassenhaus theorem $E$ and $\psi(E)$ are conjugate in $G$, and further they must then be conjugate in $D$, say by $n \in D$. Write $\tau_n \in \Aut(G)$ for the automorphism induced by conjugation by $n$, so $\tau_{n}^{-1}\psi(E)=E$. Now $\tau_{n}^{-1}\psi$ fixes $D$ pointwise. Let $g \in E$. Then $\tau_n^{-1}\psi(g) \in Dg$, since for all $x \in D$ we have $x^g=\tau_n^{-1}\psi(x^g)=x^{\tau_n^{-1}\psi(g)}$, so $\tau_n^{-1}\psi(g)g^{-1} \in C_G(D)=D$. Hence $\tau_{n}^{-1}\psi(g) \in Dg \cap E=\{ g \}$. We have shown that $\psi=\tau_n$, as claimed.

\end{proof}

Now let $A$ be a source algebra for $B$, so $A$ is a $D$-algebra and we may consider the fixed points $A^D$ under the action of $D$. Write $\Aut_D(A)$ for the group of $\cO$-algebra automorphisms of $A$ fixing each element of the image of $D$ in $A$, and $\Out_D(A)$ for the quotient of $\Aut_D(A)$ by the subgroup of automorphisms given by conjugation by elements of $(A^D)^\times$. As noted in~\cite{bkl18}, by~\cite[14.9]{pu88} $\Out_D(A)$ is isomorphic to a subgroup of $\Hom(E,k^\times)$. We shall sometimes refer to $\Out_D(B)$, which is the quotient of the group of $\cO$-algebra automorphisms of $B$ fixing each element of the image of $D$ in $B$, by the subgroup of automorphisms given by conjugation by elements of $(B^D)^\times$.

The Picard group $\Pic(B)$ of $B$ consists of isomorphism classes of $B$-$B$-bimodules which induce $\cO$-linear Morita auto-equivalences of $B$. For $B$-$B$-bimodules $M$ and $N$, the group multiplication is given by $M \otimes_B N$. Write $\cT(B)$ for the subset of $\Pic(B)$ consisting of bimodules with trivial source and $\cL(B)$ for the subset consisting of linear source modules. It is shown in~\cite{bkl18} that $\cT(B)$ and $\cL(B)$ form subgroups of $\Pic(B)$ and are described by the exact sequences
\begin{equation}\label{arr:exact}\begin{array}{lllllll}
1 & \rightarrow & \Out_D(A) & \rightarrow & \cT(B) & \rightarrow & \Out(D,\cF), \\
1 & \rightarrow & \Out_D(A) & \rightarrow & \cL(B) & \rightarrow & \Hom(D/\mathfrak{foc}(D),\mathcal{O}^\times) \rtimes \Out(D,\cF), \\
\end{array}\end{equation} where $\mathfrak{foc}(D)$ is the focal subgroup of $D$ with respect to $\mathcal{F}$, generated by the elements $\varphi(x)x^{-1}$ for $x \in D$ and $\varphi \in \Hom_\mathcal{F}(\langle x \rangle,D)$.

$\cO$-algebra automorphisms $\alpha$ of $B$ give rise to elements of the Picard group as follows. Define the $B$-$B$-bimodule ${}_\alpha B$ by taking ${}_\alpha B=B$ as sets and defining $a_1 \cdot m \cdot a_2 = \alpha(a_1)ma_2$ for $a_1,a_2,m \in B$. Inner automorphisms give isomorphic bimodules and $\alpha \mapsto {}_\alpha B$ gives rise to an injection $\Out(B) \rightarrow \Pic(B)$.

Each element of $\Pic(B)$ induces an automorphism of $Z(B)$. The subgroup consisting of those which induce the identity morphism is denoted $\Picent(B)$. Note $\Picent(B)$ is precisely the subgroup of bimodules in $\Pic(B)$ that fix every irreducible character.

The following is clear from~\cite{bkl18} but we state it here for convenience as it will be used frequently.

\begin{lem}
\label{Out_D(A)subgroup:lem}
Let $B$ be a block of $\cO G$ for a finite group $G$ and let $(D,b_D)$ be a maximal $B$-subpair. Suppose $N \lhd G$ with $G/N$ abelian, $C_G(D) \leq N$ and $G=N_G(D,b_D)N$. Then $\Out_D(B)$ has a subgroup isomorphic to $G/N$. Consequently, if $A$ is a source algebra for $B$, then $\Out_D(A)$ has a subgroup isomorphic to $G/N$.
\end{lem}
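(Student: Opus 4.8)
The plan is to produce, for each coset $gN \in G/N$, an explicit $\cO$-algebra automorphism of $B$ fixing the image of $D$, and to check that this assignment descends to an injective homomorphism $G/N \to \Out_D(B)$. Pick the maximal $B$-subpair $(D,b_D)$ and recall $B = \cO G b$ for the block idempotent $b$. Since $G = N_G(D,b_D)N$, every $g \in G$ can be written $g = yn$ with $y \in N_G(D,b_D)$ and $n \in N$; conjugation $c_g$ by $g$ stabilises $B$ (as $b \in Z(\cO G)$ it fixes $b$), so $c_g$ restricts to an $\cO$-algebra automorphism $\alpha_g$ of $B$. The point is to show $\alpha_g$ can be adjusted by an inner automorphism coming from $(B^D)^\times$ so that it fixes the image of $D$ pointwise, and then that the class of $\alpha_g$ in $\Out_D(B)$ depends only on $gN$ and is nontrivial unless $g \in N$.

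First I would handle the fixing of $D$. Write $g = yn$ as above. Conjugation by $n \in N \supseteq C_G(D)$: the hypothesis $C_G(D) \le N$ is used here together with $G/N$ abelian to control things, but more directly, $y \in N_G(D,b_D)$ normalises $D$, so $c_y$ already maps the image of $D$ into itself, permuting it via an automorphism of $D$. To kill this automorphism one notes that the element of $\Aut(D)$ induced must be trivial modulo $\Aut_\cF(D)$ \emph{no} — rather, the cleaner route is the standard fact (as in~\cite{bkl18}) that an automorphism of $B$ induced by conjugation by an element of $N_G(D,b_D)$ normalising $D$ can be composed with conjugation by a unit of $(B^D)^\times$ (indeed by a unit of the source algebra $A^D$) to fix $D$ pointwise, because such conjugation automorphisms differ from the identity on $\cO D \subseteq A$ only by an inner automorphism realised inside $A^D$. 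So I would invoke this directly, citing the construction in~\cite{bkl18}, to obtain $\tilde\alpha_g \in \Aut_D(B)$ in the class of $\alpha_g$ modulo the relevant inner automorphisms.

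Next, homomorphism and injectivity. The map $g \mapsto \alpha_g = c_g|_B$ is a homomorphism $G \to \Aut(B)$ up to inner automorphisms of $B$ (conjugation by units of $B$), and after the adjustment above it is a homomorphism $G \to \Out_D(B)$. Its kernel contains $N$: if $n \in N$ then $c_n$ is already conjugation by the unit $n \in \cO G$, and since $C_G(D) \le N$ and $G = N_G(D,b_D)N$ one checks $c_n|_B$ agrees, modulo inner automorphisms from $(B^D)^\times$, with the identity — here is exactly where $C_G(D) \le N$ forces the $D$-fixing adjustment to be available within $N$ and hence to land in the inner part. Conversely if $\alpha_g$ is inner-on-$B$ by a unit $u$ with the required $D$-equivariance, then $u$ differs from $g$ (or rather its image $gb$) by an element of $(B^D)^\times$ times something centralising $D$, forcing $g \in C_G(D)N = N$; so the kernel is exactly $N$ and we get an embedding $G/N \hookrightarrow \Out_D(B)$. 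Finally, since $\Out_D(A)$ and $\Out_D(B)$ are canonically isomorphic (the source algebra $A = fBf$ for a source idempotent $f$, and $D$-fixing algebra automorphisms correspond, as recorded in~\cite{bkl18}), the subgroup transports to $\Out_D(A)$, giving the consequential statement.

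The main obstacle I anticipate is the bookkeeping in the $D$-fixing step: conjugation by $y \in N_G(D,b_D)$ need not fix the image of $D$ in $B$ on the nose, only setwise up to the automorphism of $D$ it induces, and one must verify that this automorphism is inner in the precise sense that it is realised by conjugation by a unit of $(B^D)^\times$ (equivalently $(A^D)^\times$) so that the correction does not leave the group $\Out_D(B)$ — this is the technical heart and is where the hypotheses $C_G(D)\le N$ and $G = N_G(D,b_D)N$ genuinely interact. Everything else (that $c_g$ stabilises $b$, that the composite is a homomorphism, that $N$ is in the kernel) is formal.
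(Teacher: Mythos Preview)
Your approach has a fatal flaw: conjugation by $g \in G$ gives an \emph{inner} automorphism of $B$ (namely conjugation by the unit $g\,e_B \in B^\times$), and every inner automorphism of $B$ that lies in $\Aut_D(B)$ is already trivial in $\Out_D(B)$. Indeed, if $c_u \in \Aut_D(B)$ for some $u \in B^\times$, then $u$ commutes with every element of the image of $D$, i.e.\ $u \in B^D$; since $u$ is a unit in $B$ its inverse also commutes with $D$, so $u \in (B^D)^\times$ and $c_u$ represents the identity in $\Out_D(B)$. Thus the map $g \mapsto [c_g|_B]$ you propose is identically trivial whenever it is defined.

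Your ``adjustment'' step cannot rescue this. Composing $c_g$ with conjugation by $v \in (B^D)^\times$ gives $c_{gv}$, and $(gv)d(gv)^{-1}=g(vdv^{-1})g^{-1}=gdg^{-1}$ for $d \in D$; so the action on the image of $D$ is unchanged. Hence you cannot correct $c_g$ into $\Aut_D(B)$ using $(B^D)^\times$ unless $c_g$ already fixes $D$, and if you instead correct by an arbitrary unit of $B$ you land back in the inner case above. (You also assert that $\Out_D(B)$ and $\Out_D(A)$ are canonically isomorphic; this is not asserted in the paper and is not needed.)

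The paper's argument is entirely different and avoids inner automorphisms. For each linear character $\varphi$ of $G/N$, inflated to $G$, one defines $\alpha_\varphi \in \Aut(\cO G)$ by $\alpha_\varphi(x)=\varphi(x)x$ for $x \in G$. The hypothesis $C_G(D)\le N$ forces $\alpha_\varphi$ to fix $\cO C_G(D)$, hence to fix the subpair $(D,b_D)$ and therefore $B$; it also gives $\varphi|_D=1$ (via $D\le C_G(D)\le N$ in the abelian setting of the paper), so $\alpha_\varphi \in \Aut_D(B)$. Injectivity is read off from the induced permutation $\chi \mapsto \chi\varphi$ of $\Irr(B)$: distinct $\varphi$ give distinct permutations, so distinct classes in $\Pic(B)$ and hence in $\Out_D(B)$ and $\Out_D(A)$. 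The hypothesis $G=N_G(D,b_D)N$ is not used to build the homomorphism at all; its role is only to guarantee that $\alpha_\varphi$ fixes $B$ in the general (possibly non-principal) case.
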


\begin{proof}
Let $\varphi$ be the inflation of an irreducible character of $G/N$. An element of $\Out_D(B)$ is realised by taking the bimodule ${}_{\al} B$  inducing the Morita equivalence given by the automorphism $\al$ of $B$ given by $\al (x)=\varphi(x)x$. It is clear from the permutation of $\Irr(G)$ given by ${}_\al B$ that distinct characters of $G/N$ give rise to distinct elements of $\Pic(B)$.
\end{proof}

In order to find $\cT(B)$ when $G$ is a direct product of groups we need to show that $\Out_D(A)$ factorises according to the factorisation of $G$.

Write $i$ for the identity element of the source algebra $A$, so $A=i\cO Gi$. As described in~\cite[Remark 1.2]{bkl18} elements of $\Out_D(A)$ correspond to direct summands of $\cO Gi \otimes _{\cO D} i \cO G$ as $B$-$B$-bimodules inducing Morita equivalences. In the following we will have to pass temporarily to the source algebra defined with respect to $k$ in order to apply~\cite[Lemma 10.37]{cr2}. We use the notation $kB$ (or $KB$) to denote $B \otimes_\cO k$ (or $B \otimes_\cO K$), and use similar notation for related objects.

\begin{lem}
\label{factorise_source_algebra:lemma}
Let $G_1$ and $G_2$ be finite groups, $B_i$ a block of $\cO G_i$ and $B=B_1 \otimes_{\cO} B_2$ a block of $\cO (G_1 \times G_2)$. Let $D$ be a defect group of $B$ with $D=D_1 \times D_2$, where $D_i$ is a defect group of $B_i$. Let $e_i$ be a source idempotent of $B_i$. Then $e:=e_1\otimes e_2$ is a source idempotent of $B$ and $eBe \cong e_1B_1e_1 \otimes_{\cO} e_2B_2e_2$.

Further $\Out_D(A) \cong \Out_{D_1}(A_1) \times \Out_{D_2}(A_2)$, where $A=eBe$ and $A_i=e_iB_ie_i$.
\end{lem}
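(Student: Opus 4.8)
\emph{Part 1 (the source idempotent).} That $D=D_1\times D_2$ is a defect group of $B$ is standard. The action of $D$ on $B$ is componentwise, so $B^D=B_1^{D_1}\otimes_\cO B_2^{D_2}$ (fixed points of a product group on an external tensor product of $\cO$-free modules), and $e=e_1\otimes e_2\in B^D$ with $eB^De=(e_1B_1^{D_1}e_1)\otimes_\cO(e_2B_2^{D_2}e_2)$. Since $e_i$ is primitive in $B_i^{D_i}$ and $k$ is algebraically closed, $e_iB_i^{D_i}e_i$ is a local $\cO$-algebra with residue field $k$; a tensor product over $\cO$ of two such algebras is again local -- after reduction mod $J(\cO)$ it is a finite-dimensional $k$-algebra with a nilpotent ideal whose quotient is $k\otimes_k k=k$, hence local, and idempotents lift -- so $e$ is primitive in $B^D$. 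Finally $\Br_D(e)=\Br_{D_1}(e_1)\otimes\Br_{D_2}(e_2)\neq 0$ in $kC_G(D)=kC_{G_1}(D_1)\otimes_k kC_{G_2}(D_2)$, using multiplicativity of the Brauer homomorphism on $(\cO G_1)^{D_1}\otimes_\cO(\cO G_2)^{D_2}$ and $\Br_{D_i}(e_i)\neq 0$. Thus $e$ is a source idempotent, $eBe=e_1B_1e_1\otimes_\cO e_2B_2e_2$, and the structure map $D=D_1\times D_2\to eBe$ is the tensor product of the maps $D_i\to A_i$; that is, $A\cong A_1\otimes_\cO A_2$ as interior $D$-algebras.

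\emph{Part 2, reduction to $k$.} By~\cite[Remark 1.2]{bkl18} the subgroup $\Out_D(A)$ of $\Pic(B)$ is in bijection with the set of isomorphism classes of indecomposable $B$-$B$-bimodule direct summands of $\cO Ge\otimes_{\cO D}e\cO G$ inducing Morita self-equivalences. Distributivity of $\otimes_\cO$ over $\cO G=\cO G_1\otimes_\cO\cO G_2$ and $\cO D=\cO D_1\otimes_\cO\cO D_2$ gives a natural isomorphism of $B$-$B$-bimodules $\cO Ge\otimes_{\cO D}e\cO G\cong N_1\otimes_\cO N_2$, where $N_i:=\cO G_ie_i\otimes_{\cO D_i}e_i\cO G_i$. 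Each $N_i$ is a $p$-permutation $\cO[G_i\times G_i]$-module (a summand of the permutation module $\cO G_i\otimes_{\cO D_i}\cO G_i$), and since $\cO G_i$ is free over $\cO D_i$ and $e_i\cO G_i$ is therefore free as a right $\cO D_i$-module, $N_i$ is projective as a left and as a right $\cO G_i$-module; hence so are all its indecomposable summands. Consequently reduction mod $J(\cO)$ gives bijections between the indecomposable summands of $N_i$ and of $kN_i$ (and likewise for $\cO Ge\otimes_{\cO D}e\cO G$), preserving the property of inducing a Morita self-equivalence: trivial-source lifting gives injectivity and existence of lifts, and invertibility descends from $kB$ to $B$ by a Nakayama argument once one knows the bimodule is projective on both sides. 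So it suffices to prove the factorisation over $k$, which also puts us in a position to use~\cite[Lemma 10.37]{cr2}.

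\emph{Part 2, over $k$.} Writing $N_i\cong\bigoplus_jN_{i,j}$ into indecomposables, the $kN_{i,j}$ are indecomposable, and by~\cite[Lemma 10.37]{cr2} each outer tensor product $kN_{1,j}\otimes_k kN_{2,l}$ is an indecomposable $k[(G_1\times G_1)\times(G_2\times G_2)]$-module; thus $kN_1\otimes_k kN_2\cong\bigoplus_{j,l}kN_{1,j}\otimes_k kN_{2,l}$ is a Krull--Schmidt decomposition of $k(\cO Ge\otimes_{\cO D}e\cO G)$. Moreover $kN_{1,j}\otimes_k kN_{2,l}$ induces a Morita self-equivalence of $kB_1\otimes_k kB_2$ if and only if $kN_{1,j}$ does so for $kB_1$ and $kN_{2,l}$ for $kB_2$: invertibility of a bimodule amounts to being a progenerator on each side together with the two multiplication maps being isomorphisms, and all of these are detected tensor-factor by tensor-factor. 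Hence the outer tensor product induces a bijection $\Out_{D_1}(kA_1)\times\Out_{D_2}(kA_2)\to\Out_D(kA)$; it is visibly a group homomorphism, so an isomorphism. Combining with the reduction bijections above (for $B$, $B_1$, $B_2$), which commute with outer tensor products, gives the required isomorphism $\Out_{D_1}(A_1)\times\Out_{D_2}(A_2)\xrightarrow{\ \sim\ }\Out_D(A)$, realised by $[\alpha_1]\otimes[\alpha_2]\mapsto[\alpha_1\otimes\alpha_2]$.

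\emph{Main obstacle.} The first paragraph is essentially bookkeeping, the one genuine subtlety being primitivity of $e_1\otimes e_2$, which really uses that $k$ is algebraically closed. The substantive work is organising the passage between the $\cO$- and $k$-level source-algebra data so that every identification is compatible with outer tensor products -- in particular checking that reduction mod $J(\cO)$ is a bijection preserving invertibility (trivial-source lifting plus Nakayama descent) -- together with the Krull--Schmidt bookkeeping over $k$ and the tensor-factorwise criterion for a bimodule to be invertible.
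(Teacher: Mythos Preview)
Your proof is correct and follows the same architecture as the paper's: show $e=e_1\otimes e_2$ is primitive via ``local $\otimes$ local is local'' (the paper phrases this as an application of \cite[Lemma 10.37]{cr2} to the modules $k(B_i^{D_i}e_i)$, which is the same fact), factor the bimodule as $N_1\otimes_\cO N_2$, pass to $k$ via trivial-source lifting, and use \cite[Lemma 10.37]{cr2} for the Krull--Schmidt decomposition of the tensor product. The only variation is in the final step of checking that invertibility of $M_1\otimes M_2$ factors --- the paper passes to $K$ (where invertibility is simply a bijection of simples, which visibly factors) and then invokes \cite[Th\'eor\`eme 1.2]{br90} to descend to $\cO$, whereas you verify the progenerator and multiplication-map conditions tensor-factor by tensor-factor over $k$; both are valid, the paper's route being a little quicker.
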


\begin{proof}
We have $B^D=B_1^{D_1} \otimes_{\cO} B_2^{D_2}$ and $C_G(D)=C_G(D_1) \times C_G(D_2)$. Now~\cite[Lemma 10.37]{cr2} implies that $k(B_1^{D_1}e_1)\otimes_kk(B_2^{D_2}e_2)$ is an indecomposable $k(B_1^{D_1})\otimes_kk(B_2^{D_2})$-module. So the image of $e$ in $k(B^D)$ is primitive hence so is $e$ itself. In addition
$$\Br_{D_1\times D_2}(e)=\Br_{D_1}(e_1)\otimes_{\cO}\Br_{D_2}(e_2)\neq0$$
and so we have shown that $e$ is a source idempotent of $B$. Now every indecomposable $B$-$B$-summand of
$$\cO Ge \otimes _{\cO D} e\cO G \cong (\cO G_1e_1 \otimes_{\cO D_1} e_1\cO G_1) \otimes_{\cO} (\cO G_2e_2 \otimes _{\cO D_2} e_2\cO G_2)$$
is isomorphic to $M_1 \otimes_{\cO} M_2$, for an indecomposable $B_1$-$B_1$-summand $M_1$ of $\cO G_1e_1 \otimes _{\cO D_1} e_1\cO G_1$ and an indecomposable $B_2$-$B_2$-summand $M_2$ of $\cO G_2e_2 \otimes _{\cO D_2} e_2\cO G_2$ (since another application of~\cite[Lemma 10.37]{cr2} gives the analogous statement over $k$ and trivial source modules can be lifted uniquely to $\cO$). Finally we note that $K(M_1 \otimes_\cO M_2)$ induces a bijection of simple $KB$-modules if and only if each $KM_i$ induces a bijection of simple $KB_i$-modules. The claim now follows from~\cite[Th\'eor\`eme 1.2]{br90}.
\end{proof}


\section{Perfect isometries}
\label{perfect_isometries}

Before we calculate some Picard groups of blocks it will be necessary to determine some perfect self-isometry groups. We first introduce some notation.

Let $G$ be a finite group and $B$ a block of $\cO G$. We write $\Irr(G)$ (respectively $\Irr(B)$) for the set of irreducible characters of $G$ (respectively $B$), with respect to $K$. Write $G_{p'}$ for the set of $p$-regular elements of $G$, $\IBr(B)$ for the set of irreducible Brauer characters of $B$ and $\prj(B)$ for the set characters of projective indecomposable $B$-modules. $B_0(\cO G)$ will denote the principal block of $\cO G$.

\begin{defi}[\cite{br1990}]
We denote by $\CF(G,B,K)$ the $K$-subspace of class functions on $G$ spanned by $\Irr(B)$, by $\CF(G,B,\mathcal{O})$ the $\mathcal{O}$-submodule
\begin{align*}
\{\phi\in \CF(G,B,K):\phi(g)\in\mathcal{O}\text{ for all }g\in G\}
\end{align*}
of $\CF(G,B,K)$ and by $\CF_{p'}(G,B,\mathcal{O})$ the $\mathcal{O}$-submodule
\begin{align*}
\{\phi\in \CF(G,B,\mathcal{O}):\phi(g)=0\text{ for all }g\in G\backslash G_{p'}\}
\end{align*}
of $\CF(G,B,\mathcal{O})$.

Now in addition let $H$ be a finite group and $C$ a block of $\mathcal{O}H$. A \textbf{perfect isometry} between $B$ and $C$ is an isometry
\begin{align*}
I:\mathbb{Z}\Irr(B)\to\mathbb{Z}\Irr(C),
\end{align*}
such that
\begin{align*}
I_K:=I\otimes_{\mathbb{Z}}K:K\Irr(B)\to K\Irr(C),
\end{align*}
induces an $\cO$-module isomorphism between $\CF(G,B,\mathcal{O})$ and $\CF(H,C,\mathcal{O})$ and also between $\CF_{p'}(G,B,\mathcal{O})$ and $\CF_{p'}(H,C,\mathcal{O})$. (Note that by an isometry we mean an isometry with respect to the usual inner products on $\mathbb{Z}\Irr(B)$ and $\mathbb{Z}\Irr(C)$, so for all $\chi\in\Irr(B)$, $I(\chi)=\pm\psi$ for some $\psi\in\Irr(C)$).

If $H=G$ and $C=B$ then we describe $I$ as a perfect self-isometry of $B$. We denote by $\Perf(B)$ the group of perfect self-isometries of $B$.
\end{defi}

\begin{rem}
An alternative way of phrasing the condition that $I_K$ induces an isomorphism between $\CF_{p'}(G,B,\mathcal{O})$ and $\CF_{p'}(H,C,\mathcal{O})$ is that $I$ induces an isomorphism $\mathbb{Z}\prj (B) \cong \mathbb{Z}\prj (C)$.
\end{rem}

The following two well-known lemmas are both proved in~\cite{br1990}.

\begin{lem}\label{lem:isomcent}
Let $G$ and $G'$ be finite groups, $B$ and $B'$ blocks of $\cO G$ and $\cO G'$ respectively and $I:\mathbb{Z}\Irr(B)\to\mathbb{Z}\Irr(B')$ a perfect isometry. The $K$-algebra isomorphism between $Z(KB)$ and $Z(KB')$ given by the bijection of character idempotents induced by $I$ induces an $\cO$-algebra isomorphism $\phi_I:Z(B)\to Z(B')$.
\end{lem}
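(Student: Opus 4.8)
The plan is to build the map $\phi_I$ explicitly from the character-idempotent bijection and then check that it carries the integral structure $Z(B)$ into $Z(B')$, using the defining properties of a perfect isometry. First I would recall that $Z(KB) = \bigoplus_{\chi \in \Irr(B)} K e_\chi$, where $e_\chi$ is the primitive idempotent of $Z(KB)$ associated to $\chi$, and similarly for $Z(KB')$. Since $I$ sends $\Irr(B)$ bijectively (up to signs) onto $\Irr(B')$, setting $\phi_I(e_\chi) = e_{I(\chi)}$ (the sign is irrelevant on idempotents) and extending $K$-linearly gives a $K$-algebra isomorphism $Z(KB) \to Z(KB')$. The real content is that $\phi_I$ restricts to an $\cO$-algebra isomorphism $Z(B) \to Z(B')$, i.e.\ that it maps the $\cO$-lattice $Z(B)$ onto $Z(B')$; since $\phi_I^{-1} = \phi_{I^{-1}}$ is of the same form, it suffices to check $\phi_I(Z(B)) \subseteq Z(B')$.

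The key step is to identify $Z(B)$ inside $Z(KB)$ in a way that makes the perfect-isometry hypothesis directly applicable. Here I would use the standard description of $Z(B)$ via class sums: $Z(\cO G) = \bigoplus_{C} \cO\, \widehat{C}$ over the conjugacy classes $C$ of $G$, with $\widehat{C} = \sum_{g \in C} g$, and $Z(B) = \mathfrak{b} Z(\cO G)$ where $\mathfrak{b}$ is the block idempotent. Dually, under the symmetrizing form, $Z(B)$ is the $\cO$-dual of the span of the class functions $\{\,\text{characteristic function of } C \cap G_{p'}\,\}$ paired suitably — more precisely, the point is that an element $z = \sum_\chi a_\chi e_\chi$ of $Z(KB)$ lies in $Z(B)$ precisely when the associated central character values, equivalently the function $g \mapsto \sum_\chi a_\chi \frac{\chi(g)\chi(1)}{|G|}$ on $G_{p'}$, takes values in a suitable $\cO$-module, and this is exactly the kind of condition controlled by $\CF_{p'}(G,B,\cO)$. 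So I would translate membership in $Z(B)$ into a statement about $\mathbb{Z}\prj(B)$ (or $\CF_{p'}(G,B,\cO)$) under the natural pairing between $Z(KB)$ and $\CF(G,B,K)$, observe that $I$ and hence $I_K$ is an isometry compatible with this pairing, apply the defining property that $I_K$ maps $\CF_{p'}(G,B,\cO)$ isomorphically onto $\CF_{p'}(H,C,\cO)$ (and $\CF(G,B,\cO)$ onto $\CF(H,C,\cO)$), and conclude that $\phi_I$ sends $Z(B)$ into $Z(B')$.

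That $\phi_I$ is multiplicative is automatic since it permutes a full set of orthogonal primitive idempotents of $Z(KB)$; the only genuine work is the integrality argument of the previous paragraph, and I expect the main obstacle to be bookkeeping the precise duality identifying $Z(B)$ with (a rescaling of) $\CF_{p'}(G,B,\cO)$ so that the two conditions in the definition of a perfect isometry line up exactly with ``$\phi_I(Z(B)) = Z(B')$''. Since this is a well-known result from~\cite{br1990}, I would if anything just cite that reference for the duality statement and keep the argument short; the whole lemma is really the observation that a perfect isometry is, by construction, an isometry that respects both the $\cO$-form on class functions and its $p'$-refinement, and the centre of the block algebra is precisely what those two conditions pin down.
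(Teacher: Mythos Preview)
The paper does not give its own proof of this lemma: it is stated as one of two ``well-known lemmas'' and simply attributed to Brou\'e~\cite{br1990}. Your proposal is a correct outline of the standard argument from that reference, and indeed you yourself suggest citing~\cite{br1990} for the key duality identification. So there is no discrepancy to report: the paper cites the result without proof, and your sketch reconstructs the expected argument.
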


\begin{lem}\label{lem:morperf}
Any Morita equivalence of blocks induces a perfect isometry.
\end{lem}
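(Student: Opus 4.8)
The plan is to recall that a Morita equivalence of blocks $B$ of $\cO G$ and $C$ of $\cO H$ is induced by a bimodule $M$ which, being a progenerator, is a direct summand of a sum of copies of $\cO(G\times H)$ and hence is $\cO$-free of finite rank; in particular $M\otimes_\cO K$ and $M\otimes_\cO k$ induce Morita equivalences of $KB$ with $KC$ and of $kB$ with $kC$ respectively. The character of $M$, viewed as a virtual character of $G\times H$ lying in $\Irr(B)\otimes\overline{\Irr(C)}$, defines the isometry $I:\mathbb{Z}\Irr(B)\to\mathbb{Z}\Irr(C)$ by $I(\chi)=M\cdot\chi$ (the effect on characters of the functor $M\otimes_B-$); that this is a signed bijection on irreducibles is exactly the statement that $M\otimes_\cO K$ gives a Morita equivalence, together with the fact that a Morita equivalence bimodule between semisimple algebras over a splitting field permutes the simple modules. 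So the only real content is the \emph{perfectness}: that $I_K$ carries $\CF(G,B,\cO)$ isomorphically onto $\CF(H,C,\cO)$ and $\CF_{p'}(G,B,\cO)$ isomorphically onto $\CF_{p'}(H,C,\cO)$.

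First I would treat $\CF_{p'}$, equivalently (by the Remark following the definition) the statement that $I$ induces an isomorphism $\mathbb{Z}\prj(B)\cong\mathbb{Z}\prj(C)$. The point is that $M\otimes_B-$ sends projective $B$-modules to projective $C$-modules (a Morita equivalence preserves projectivity), and sends the projective indecomposables to the projective indecomposables since it is an equivalence; hence the characters of the PIMs of $B$ are sent, up to sign, to characters of PIMs of $C$, which is the required isomorphism of lattices spanned by $\prj$. One should note $M\otimes_B-$ and $\Hom_B(M,-)$ are inverse equivalences so the map is bijective on PIMs.

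For the $\CF(G,B,\cO)$ half, the statement is that $I_K$ preserves $\cO$-valued class functions. Here the clean argument is to observe that $\CF(G,B,\cO)$ is spanned over $\cO$ by the characters of the finitely generated $B$-lattices (indeed by those of $\cO$-free $B$-modules), because any $\cO$-valued class function supported on $B$ is a $\ZZ$-combination of such characters; since $M$ is $\cO$-free, $M\otimes_B V$ is an $\cO$-free $C$-module whenever $V$ is an $\cO$-free $B$-module, and its character is $I_K$ applied to the character of $V$. Thus $I_K$ maps a spanning set of $\CF(G,B,\cO)$ into $\CF(H,C,\cO)$, and applying the same reasoning to the inverse bimodule gives the reverse inclusion, so $I_K$ restricts to an $\cO$-isomorphism $\CF(G,B,\cO)\cong\CF(H,C,\cO)$.

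The main obstacle, and the step to be careful about, is the $\cO$-integrality claim: one must genuinely use that the bimodule $M$ is $\cO$-\emph{free} (not merely $K$-free), which is where ``block of $\cO G$'' rather than ``block of $KG$'' is essential, and one must phrase the spanning statements for $\CF(G,B,\cO)$ and $\mathbb{Z}\prj(B)$ correctly — e.g.\ that $\CF(G,B,\cO)$ is exactly the $\cO$-span of characters of $\cO$-free $B$-modules, and that the isometry condition $I(\chi)=\pm\psi$ forces $I$ to be a \emph{signed} permutation of $\Irr(B)$, compatibly with the block decomposition. Everything else is bookkeeping: that $M\otimes_\cO K$ and $M\otimes_\cO k$ are still Morita bimodules, that the functor's effect on characters is given by the character of $M$, and that inverse functors give the two-sided inclusions of lattices.
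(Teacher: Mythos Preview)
The paper does not prove this lemma itself; it simply cites Brou\'e~\cite{br1990}. Your treatment of the $\CF_{p'}$ condition via $\ZZ\prj$ is fine (and note that for an honest Morita equivalence all the signs are $+1$, since $M\otimes_B P$ is an actual module).

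The gap is in the $\CF(G,B,\cO)$ step. The assertion that $\CF(G,B,\cO)$ equals the $\cO$-span (or $\ZZ$-span) of characters of $B$-lattices is false. Take $p=2$, $G=\langle g\rangle\cong C_2$, $B=\cO G$: the class function $\phi=\tfrac12(1_G+\mathrm{sgn})$ has $\phi(1)=1$, $\phi(g)=0$, so $\phi\in\CF(G,B,\cO)$, yet every $B$-lattice has character in $\ZZ\Irr(G)$ and $\tfrac12\notin\cO$, so $\phi$ is not an $\cO$-combination of lattice characters. Hence knowing that $I_K$ sends lattice characters to lattice characters does not show it preserves $\CF(-,-,\cO)$. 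Brou\'e's argument instead establishes the pointwise integrality $\mu(g,h)/|C_G(g)|\in\cO$ (and symmetrically in $h$) for the character $\mu$ of the bimodule $M$. This comes from one-sided projectivity: if $P$ is a finitely generated projective left $\cO G$-module and $f\in\End_{\cO G}(P)$, a direct computation on $P=\cO G$ (and hence on summands of free modules) yields $\mathrm{tr}(g\circ f\mid P)\in|C_G(g)|\,\cO$; apply this with $f$ equal to the right action of $h\in H$. Rewriting $I_K(\phi)(h)$ as a sum over conjugacy classes of $G$ of terms $\phi(g)\,\mu(g,h)/|C_G(g)|$ then makes the $\cO$-integrality transparent. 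Your diagnosis that the $\cO$-freeness and two-sided projectivity of $M$ are what matter is correct; the integrality simply has to be extracted from $\mu$ pointwise rather than via a spanning-set argument that does not hold.
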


Before proceeding with some specific examples we need a lemma about Picard groups and perfect self-isometry groups of group algebras of $p$-groups.

\begin{lem}\label{lem:perfP}
Let $P$ be a finite $p$-group. Then we have the following isomorphisms of groups.
\begin{enumerate}[(a)]
\item ${\Pic}(\cO P)\cong{\Out}(\cO P)\cong{\Hom}(P,\cO^\times)\rtimes{\Out}(P)\cong\cL(\cO P)$.
\item If $P$ is abelian, then ${\Perf}(\cO P)\cong{\Aut}(\cO P)\times C_2$.
\end{enumerate}
\end{lem}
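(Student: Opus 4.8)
For part (a), the plan is to use the fact that every Picard group element of $\cO P$ for $P$ a $p$-group comes from an $\cO$-algebra automorphism. Since $\cO P$ is indecomposable as an algebra (it is a local ring modulo its radical only if $P$ is trivial, but more to the point it has a unique block), any $B$-$B$-bimodule inducing a Morita self-equivalence is, after tensoring with $k$, a trivial-source bimodule, and such bimodules are induced from the normaliser structure. Concretely, I would invoke the exact sequences~\eqref{arr:exact} with $B = \cO P$: here the source algebra $A$ is $\cO P$ itself, $D = P$, the fusion system $\cF$ is trivial so $\Out(P,\cF) = \Out(P)$, the focal subgroup $\mathfrak{foc}(P)$ is trivial, and $\Out_D(A) = \Out_P(\cO P)$ is a subgroup of $\Hom(E, k^\times) = \Hom(1,k^\times) = 1$, hence trivial. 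The sequence for $\cL(\cO P)$ then gives an injection $\cL(\cO P) \hookrightarrow \Hom(P,\cO^\times) \rtimes \Out(P)$. For surjectivity, every $\varphi \in \Hom(P,\cO^\times)$ gives the automorphism $x \mapsto \varphi(x)x$ of $\cO P$ (this is the $p$-group case of Lemma~\ref{Out_D(A)subgroup:lem} with $N = 1$), and every $\sigma \in \Out(P)$ lifts to an automorphism of $\cO P$; so $\Out(\cO P) \to \Hom(P,\cO^\times)\rtimes\Out(P)$ is onto. Finally I must check $\Pic(\cO P) = \cL(\cO P)$: since any bimodule inducing a Morita equivalence of $\cO P$ has vertex contained in $P\times P$ and is (over $k$, hence over $\cO$) a direct summand of a permutation bimodule with one-dimensional source because $P$ is a $p$-group — every indecomposable trivial-source bimodule with twisted diagonal vertex here has linear source — we get $\Pic(\cO P) = \cL(\cO P)$. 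Alternatively one cites that $\Pic(\cO P) = \Out(\cO P)$ directly since $\cO P$ has trivial Brauer tree/stable structure forcing all self-equivalences to be automorphisms; I would phrase it via the known fact (in~\cite{bkl18}) that for a nilpotent block the source algebra is a matrix algebra over $\cO P$ and $\Pic$ is computed by the sequences above.

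For part (b), assume $P$ abelian. The plan is: first, Lemma~\ref{lem:morperf} gives an injection $\Pic(\cO P) \to \Perf(\cO P)$, and combined with (a) and Lemma~\ref{lem:isomcent}, the image is exactly those perfect self-isometries whose associated algebra automorphism of $Z(\cO P) = \cO P$ is an automorphism — in fact $\Aut(\cO P) \cong \Hom(P,\cO^\times)\rtimes\Aut(P)$ sits inside $\Perf(\cO P)$ (note $\Out(\cO P) \cong \Aut(\cO P)$ here since $\cO P$ is commutative, so $\Inn = 1$, and $\Pic(\cO P) \cong \Aut(\cO P)$). Second, I exhibit one extra perfect self-isometry not coming from $\Pic$: the map $-{}$ sending each $\chi \in \Irr(P)$ to $-\chi$ composed with... no — rather, the isometry $I(\chi) = \bar\chi$ (complex conjugation of characters) is an algebra automorphism, so that is already in $\Aut(\cO P)$. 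The genuinely new one is $I_0 \colon \chi \mapsto \epsilon(\chi)\cdot(\text{something})$; the standard fact is that for $P$ abelian $\Perf(\cO P)$ contains the sign-twist built from the linear character structure — concretely, because $\Irr(P)$ is an abelian group isomorphic to $P$ and the perfectness conditions (integrality on $\CF(P,\cO P,\cO)$ and vanishing off $p$-regular elements, which here means off the identity since $P$ is a $p$-group, so $\CF_{p'} = \cO\cdot(\text{regular character})$) only constrain $I$ up to the sign of the "augmentation" direction. The $C_2$ factor is generated by the isometry fixing the trivial character and the regular representation's summands appropriately; I would pin it down as the isometry $I$ with $I(1_P) = 1_P$ and $I(\chi) = -\chi$ for $\chi \neq 1_P$ twisted so that $\sum I(\chi)\chi$ stays integral — checking this is a perfect isometry is a direct computation with the single $p$-regular element. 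Then I show $\Perf(\cO P) = \Aut(\cO P) \rtimes \langle I_0\rangle$ and that the product is direct because $I_0$ is central in $\Perf(\cO P)$ (it commutes with every character permutation since it only rescales by signs depending on "being trivial or not", a property preserved by group automorphisms of $\Irr(P)$).

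\textbf{Main obstacle.} The delicate point is part (b): correctly identifying the extra $C_2$ and proving $\Perf(\cO P)$ is exactly $\Aut(\cO P)\times C_2$ with no further perfect isometries. A perfect self-isometry $I$ is a signed permutation of $\Irr(P)$, and I must show the only constraints are (i) the underlying permutation is induced by an automorphism of the group $\Irr(P) \cong P$ together with a linear twist (this is where integrality of $I_K$ on $\CF(P,\cO P,\cO)$ enters, analysing the action on the $\cO$-lattice of $\cO$-valued class functions, which for a $p$-group is closely tied to the augmentation ideal filtration), and (ii) the sign pattern is constrained only by the single condition coming from $\CF_{p'}$, namely that $\sum_\chi \pm\chi$ remains a nonzero multiple of the regular character's projective cover — forcing all signs equal on non-trivial characters once the permutation is fixed, which yields precisely the two-element choice. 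I would handle (i) by reducing to the known description of $\Perf(\cO P)$ for abelian $p$-groups if it is in the literature, and otherwise by a direct argument: an $\cO$-basis of $\CF(P,\cO P,\cO)$ is given by the indicator-type functions $\frac{1}{|P|}\sum_{\chi}\overline{\chi(g)}\chi$ for $g \in P$ (i.e. the "columns" of the character table), these are permuted among themselves by $I$, and such a permutation of columns corresponds to a group automorphism of $P$; similarly tracking which linear twists preserve the lattice gives the $\Hom(P,\cO^\times)$ part — but since $I$ must also send $\Irr$ to $\pm\Irr$, the twist must be by $\pm 1$-valued linear characters unless absorbed into the permutation, and the residual freedom collapses to the $C_2$.
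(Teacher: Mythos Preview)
Your treatment of part (a) is essentially correct and aligns with the paper's approach: both reduce to the structure results of~\cite{bkl18}, the paper simply exhibiting the linear-source bimodule realising each $\lambda\in\Hom(P,\cO^\times)$ and then citing \cite[Theorem~1.1]{bkl18} directly.

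Part (b), however, contains a genuine error and misses the key simplification. Your proposed generator of the $C_2$ factor, the isometry with $I(1_P)=1_P$ and $I(\chi)=-\chi$ for $\chi\neq 1_P$, is \emph{not} a perfect self-isometry: the unique projective indecomposable character of $\cO P$ is the regular character $\rho=\sum_{\chi\in\Irr(P)}\chi$, and your map sends $\rho$ to $2\cdot 1_P-\rho$, which does not lie in $\ZZ\prj(\cO P)=\ZZ\rho$. The correct $C_2$ is the global sign $\chi\mapsto -\chi$ for \emph{all} $\chi$. The observation you are missing is precisely that $\cO P$ has a \emph{single} indecomposable projective module, so $\ZZ\prj(\cO P)$ has rank one; any isometry preserving it must send $\rho$ to $\pm\rho$, forcing all signs equal (not merely equal on non-trivial characters). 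Once this is seen, the lattice analysis you propose --- permuting columns of the character table, tracking which twists preserve $\CF(P,\cO P,\cO)$ --- is unnecessary: by Lemma~\ref{lem:isomcent} the underlying permutation of $\Irr(P)$ already induces an $\cO$-algebra automorphism of $Z(\cO P)=\cO P$, and conversely every element of $\Aut(\cO P)$ permutes the character idempotents and so gives a perfect self-isometry with all positive signs. This yields $\Perf(\cO P)\cong\Aut(\cO P)\times C_2$ immediately.
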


\begin{proof}$ $
\begin{enumerate}[(a)]
\item Let $\lambda\in{\Hom}(P,\cO^\times)$. Then $M_{\Delta P}\uparrow^{P\times P}$ is the bimodule inducing the Morita equivalence given by tensoring with $\lambda$, where $M_{\Delta P}$ is the $\cO(\Delta P)$-module given by $(g,g).m=\lambda(g)m$, for all $g\in P$ and $m\in M_{\Delta P}$. Hence $\Hom (P,\cO^\times)$ embeds into ${\Pic}(\cO P)$, and the result follows from~\cite[Theorem 1.1]{bkl18}.

\item Since there is only one indecomposable projective module for $\cO P$, every perfect self-isometry of $\cO P$ must have all positive or all negative signs. Now by Lemma~\ref{lem:isomcent} the induced permutation of $\Irr(\cO P)$ induces an automorphism of $\Aut(Z(\cO P))= \Aut(\cO P)$. Hence the result.
\end{enumerate}
\end{proof}



We recall the character table of $A_4$, where we also set up some labelling of characters. We denote by $\omega\in\cO$ a primitive $3^{\operatorname{rd}}$ root of unity.

\begin{align*}
\begin{tabular}{|c||c|c|c|c|} \hline
 & $()$ & $(12)(34)$ & $(123)$ & $(132)$ \\ \hline
$\chi_1$ & $1$ & $1$ & $1$ & $1$ \\
$\chi_2$ & $1$ & $1$ & $\omega$ & $\omega^2$ \\
$\chi_3$ & $1$ & $1$ & $\omega^2$ & $\omega$ \\
$\chi_4$ & $3$ & $-1$ & $0$ & $0$ \\ \hline
\end{tabular}
\end{align*}

For the rest of this section we assume $p=2$.

\begin{prop}\cite[Proposition 2.8]{el2018}\label{prop:self_A4}
The perfect self-isometries of $\mathcal{O}A_4$ are precisely the isometries of the form:
\begin{align*}
I_{\sigma,\epsilon}:\mathbb{Z}\Irr(A_4)&\to\mathbb{Z}\Irr(A_4)\\
\chi_j&\mapsto\epsilon\delta_j\delta_{\sigma(j)}\chi_{\sigma(j)},
\end{align*}
for $1\leq j\leq 4$, where $\sigma\in S_4$, $\epsilon\in\{\pm1\}$ and $\delta_1=\delta_2=\delta_3=-\delta_4=1$. Hence ${\Perf}(\cO A_4)\cong S_4\times C_2$.
\end{prop}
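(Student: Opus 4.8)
The plan is to determine $\Perf(\cO A_4)$ directly from the structure of the block, which is the whole group algebra since $A_4$ has a normal Sylow $2$-subgroup (the Klein four group $V$) and $A_4 = V \rtimes C_3$ is a single block. First I would record the basic numerical data: $\cO A_4$ has four ordinary irreducible characters $\chi_1,\chi_2,\chi_3,\chi_4$ as in the table, and three irreducible Brauer characters (the restrictions of $\chi_1,\chi_2,\chi_3$ to $2$-regular elements, since $V$ acts trivially on the one-dimensionals and $\chi_4$ is the projective cover character's top), with a single isomorphism class behaviour governed by the fact that the three linear characters are the non-projective simples and the unique projective indecomposable $\Phi$ has character $\chi_1+\chi_2+\chi_3+\chi_4$ on passing through; more precisely $\prj(\cO A_4)$ is spanned by $\chi_i + \chi_4$ for $i=1,2,3$ (or an equivalent description via the decomposition matrix with the three linear characters having a single nonzero entry and $\chi_4$ being projective).

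Next I would set up what a perfect self-isometry $I$ must do. Since $I$ permutes $\pm\Irr(A_4)$, write $I(\chi_j) = \pm \chi_{\sigma(j)}$ for a permutation $\sigma \in S_4$; the sign function is the unknown to pin down. The condition that $I$ restricts to an $\cO$-isomorphism of $\CF_{p'}$ — equivalently induces an isomorphism $\ZZ\prj(\cO A_4) \cong \ZZ\prj(\cO A_4)$ — forces compatibility with the decomposition matrix. Because $\chi_4$ is (up to sign) the only character whose reduction involves the $3$-dimensional projective component and the other three appear symmetrically, I would argue that the signs on $\chi_1,\chi_2,\chi_3$ must all be equal to some $\epsilon \in \{\pm1\}$ and the sign on $\chi_4$ must then be $-\epsilon$: this is exactly the claim that $\delta_1=\delta_2=\delta_3=-\delta_4=1$ enters multiplied by a global $\epsilon$. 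Concretely, $I$ must send $\ZZ\langle \chi_i + \chi_4 : i=1,2,3\rangle$ to itself, and send the integral class functions vanishing off $V$-related elements correctly; checking the two or three extremal lattice generators forces the sign pattern and shows $\sigma$ can be arbitrary in $S_4$ (each of the $4!$ permutations, paired with the forced signs, genuinely gives a perfect isometry — this direction is the easy verification that the listed maps all work, using that $\CF(A_4,\cO A_4,\cO)$ and $\CF_{p'}$ are visibly permutation-stable under the $\delta$-twisted $S_4$-action on the $\chi_j$).

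The remaining step is the group-theoretic identification: the set of pairs $(\sigma,\epsilon)$ with composition $I_{\sigma,\epsilon} \circ I_{\sigma',\epsilon'} = I_{\sigma\sigma', \epsilon\epsilon'}$ (the $\delta$'s cancel because $\delta_j\delta_{\sigma(j)}\cdot\delta_{\sigma(j)}\delta_{\sigma'\sigma(j)} = \delta_j \delta_{\sigma'\sigma(j)}$), so the map $(\sigma,\epsilon)\mapsto I_{\sigma,\epsilon}$ is an isomorphism $S_4 \times C_2 \xrightarrow{\sim} \Perf(\cO A_4)$, with injectivity clear since distinct $(\sigma,\epsilon)$ give distinct permutations of $\pm\Irr(A_4)$.

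I expect the main obstacle to be the sign-rigidity argument: showing that no perfect self-isometry can, say, negate only $\chi_2$ while fixing the rest, i.e. that the $p'$-part of the perfect-isometry condition genuinely couples the signs of the three linear characters to each other and anti-couples them to $\chi_4$. The cleanest way to nail this is to exhibit an explicit generator of the rank-one lattice $\CF_{p'}(A_4,\cO A_4,\cO)$ — it is spanned by the single function supported on $2$-regular classes which is the character of the projective indecomposable, $\chi_1+\chi_2+\chi_3+\chi_4$ restricted to $A_4{}_{2'}$, up to scalar — and observe that $I_K$ must send a generator to $\pm$ a generator, which immediately yields $\epsilon(\chi_1)=\epsilon(\chi_2)=\epsilon(\chi_3)=-\epsilon(\chi_4)$ after matching values on the identity and on $(123)$, $(132)$. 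Once that is in hand, everything else is bookkeeping. (Alternatively one can simply cite \cite[Proposition 2.8]{el2018} verbatim, since the statement is quoted from there, but the above is the self-contained route.)
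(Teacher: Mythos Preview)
The paper does not give a proof of this proposition; it simply cites \cite[Proposition~2.8]{el2018}. Your overall strategy---constrain the signs via preservation of $\ZZ\prj(\cO A_4)=\ZZ\langle\chi_1+\chi_4,\chi_2+\chi_4,\chi_3+\chi_4\rangle$, then verify the resulting maps are perfect---is sound and is essentially how the cited argument goes. However, your execution contains two concrete errors.

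First, your stated sign pattern is wrong. You claim the signs on $\chi_1,\chi_2,\chi_3$ are all $\epsilon$ and the sign on $\chi_4$ is $-\epsilon$, i.e.\ $I(\chi_j)=\epsilon\delta_j\chi_{\sigma(j)}$. But the proposition says $I(\chi_j)=\epsilon\delta_j\delta_{\sigma(j)}\chi_{\sigma(j)}$, and the factor $\delta_{\sigma(j)}$ is not cosmetic: for $\sigma=(14)$ one gets $I_{(14),1}(\chi_1)=-\chi_4$, $I_{(14),1}(\chi_2)=\chi_2$, $I_{(14),1}(\chi_3)=\chi_3$, $I_{(14),1}(\chi_4)=-\chi_1$, so the signs are $(-,+,+,-)$, not $(+,+,+,-)$. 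The correct derivation is to impose that each $I(\chi_i+\chi_4)$ lies in $\ZZ\prj$; since an element $a_1\chi_1+a_2\chi_2+a_3\chi_3+a_4\chi_4$ belongs to $\ZZ\prj$ exactly when $a_4=a_1+a_2+a_3$, splitting into the cases $\sigma(4)=4$ and $\sigma(4)\neq4$ yields precisely the signs $\epsilon\delta_j\delta_{\sigma(j)}$. Second, $\CF_{p'}(A_4,\cO A_4,\cO)$ is not rank one: there are three $2$-regular classes and three projective indecomposables, so $\ZZ\prj$ has rank three. Your ``single generator $\chi_1+\chi_2+\chi_3+\chi_4$'' argument therefore collapses; it is the full rank-three lattice that must be preserved, and this is what forces the signs as above. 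You should also not omit the forward verification that each $I_{\sigma,\epsilon}$ preserves $\CF(A_4,\cO A_4,\cO)$, which is a separate (short) check from the character table.
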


Before proceeding we need a lemma. Let $n\in\mathbb{N}$ and $\zeta\in\cO$ a primitive $(2^n)^{\operatorname{th}}$ root of unity.

\begin{lem}\cite[Lemma 2.9]{el2018}\label{lem:roots_in_o}
Let $m$ be a positive integer and suppose $\sum_{i=0}^{2^m-1}\zeta^{l_i}\in2^m\mathcal{O}$, where $l_i\in\mathbb{Z}$ for $0\leq i<2^m$. Then either $\zeta^{l_0}=\dots=\zeta^{l_{2^m-1}}$ or $\sum_{i=0}^{2^m-1}\zeta^{l_i}=0$.
\end{lem}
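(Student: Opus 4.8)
The plan is to turn the statement into an elementary counting problem by passing to the ring of integers $\mathbb{Z}[\zeta]$ of $\mathbb{Q}(\zeta)$ and exploiting its $\mathbb{Z}$-basis $1,\zeta,\dots,\zeta^{2^{n-1}-1}$, coming from the minimal polynomial $x^{2^{n-1}}+1$ of $\zeta$. Write $S=\sum_{i=0}^{2^m-1}\zeta^{l_i}$; since $\zeta^{l_i}$ depends only on $l_i$ modulo $2^n$ I may assume $0\leq l_i<2^n$, so that $S\in\mathbb{Z}[\zeta]$. The first step is to show that the hypothesis $S\in 2^m\mathcal{O}$ is equivalent to $S\in 2^m\mathbb{Z}[\zeta]$. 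This rests on the classical fact that $2$ is totally ramified in $\mathbb{Q}(\zeta)/\mathbb{Q}$, with $\mathfrak{p}=(1-\zeta)$ the unique prime above it and $v_{\mathfrak p}(2)=2^{n-1}$; since $\mathcal{O}$ contains $\zeta$ and its maximal ideal contains $2$, it dominates $\mathbb{Z}[\zeta]_{\mathfrak p}$, so for $S\in\mathbb{Z}[\zeta]$ we have $S\in 2^m\mathcal{O}$ iff $v_{\mathfrak p}(S)\geq 2^{n-1}m$, and as $\mathfrak p$ is the only prime of $\mathbb{Z}[\zeta]$ over $2$ this says exactly $S\in 2^m\mathbb{Z}[\zeta]$.

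Next I would use $\zeta^{2^{n-1}}=-1$ to expand $S$ in the integral basis. For $0\leq j<2^{n-1}$ set
\[
a_j=\#\{\,i:l_i\equiv j\bmod 2^n\,\},\qquad c_j=\#\{\,i:l_i\equiv j+2^{n-1}\bmod 2^n\,\},
\]
so that $S=\sum_{j=0}^{2^{n-1}-1}(a_j-c_j)\zeta^j$ with $a_j,c_j$ nonnegative integers and $\sum_{j=0}^{2^{n-1}-1}(a_j+c_j)=2^m$. Since $1,\zeta,\dots,\zeta^{2^{n-1}-1}$ is a $\mathbb{Z}$-basis of $\mathbb{Z}[\zeta]$, the outcome of the first step forces $2^m\mid a_j-c_j$ in $\mathbb{Z}$ for every $j$. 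But $|a_j-c_j|\leq a_j+c_j\leq 2^m$, so each $a_j-c_j$ lies in $\{-2^m,0,2^m\}$.

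Finally I would split into cases. If $a_j-c_j=0$ for all $j$, then $S=0$. Otherwise $a_{j_0}-c_{j_0}=\pm2^m$ for some $j_0$; say $a_{j_0}-c_{j_0}=2^m$ (the case $-2^m$ being identical with $c_{j_0}$ in place of $a_{j_0}$), so $a_{j_0}=2^m$. Then $\sum_j(a_j+c_j)=2^m$ with all summands nonnegative forces every other $a_j$ and $c_j$ to vanish, hence every $l_i\equiv j_0\bmod 2^n$ and $\zeta^{l_0}=\dots=\zeta^{l_{2^m-1}}=\zeta^{j_0}$. The only point requiring real care is the reduction in the first step — descending from divisibility by $2^m$ in $\mathcal{O}$ to divisibility of the integer coefficients $a_j-c_j$, i.e.\ ruling out any cancellation modulo $2^m$ between distinct $\zeta^j$ that is invisible coefficient-by-coefficient; the integral basis together with the total ramification of $2$ is exactly what makes this work, after which the argument is purely combinatorial.
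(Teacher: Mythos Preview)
The paper does not prove this lemma; it is simply quoted from \cite[Lemma 2.9]{el2018}, so there is nothing to compare against. Your argument is correct: the reduction from $S\in 2^m\mathcal{O}$ to $S\in 2^m\mathbb{Z}[\zeta]$ via total ramification of $2$ in $\mathbb{Q}(\zeta)$ (so that the $\mathcal{O}$-valuation restricted to $\mathbb{Z}[\zeta]$ is a scalar multiple of $v_{\mathfrak p}$, and $(2)=\mathfrak p^{2^{n-1}}$) is exactly the point that makes the coefficient-by-coefficient divisibility $2^m\mid a_j-c_j$ legitimate, after which $\sum_j(a_j+c_j)=2^m$ forces the dichotomy as you say.
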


Let $P$ be a finite abelian $2$-group.

\begin{thm}\label{thm:PA4}
Every perfect self-isometry of $\mathcal{O}(P\times A_4)$ is of the form $(J,I_{\sigma,\epsilon})$, where $J$ is a perfect isometry of $\cO P$ induced by an $\cO$-algebra automorphism, $\sigma\in S_4$ and $\epsilon\in\{\pm1\}$.
\end{thm}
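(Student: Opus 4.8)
The plan is to combine the structure theory of perfect isometries with the two lemmas just stated, reducing everything to the known description of $\Perf(\cO A_4)$ in Proposition~\ref{prop:self_A4} and $\Perf(\cO P)$ in Lemma~\ref{lem:perfP}. First I would fix notation: the irreducible characters of $P \times A_4$ are the products $\mu \times \chi_j$ with $\mu \in \Irr(P)$ and $1 \leq j \leq 4$, and $\Irr(\cO(P \times A_4))$ is a single block (since $P$ is a $2$-group and $\cO A_4$ is a block). Given a perfect self-isometry $L$ of $\cO(P\times A_4)$, the goal is to show it respects this product decomposition, i.e. that $L(\mu \times \chi_j) = \pm (\nu \times \chi_{\sigma(j)})$ where $\nu$ depends only on $\mu$ (via a fixed isometry $J$ of $\cO P$) and $\sigma \in S_4$ does not depend on $\mu$, and then to pin down the signs.

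The key step is to exploit the $p$-regular/projective side of the perfect isometry condition. Since $P$ is a $2$-group, every element of $P \times A_4$ is $2$-regular precisely when its $A_4$-component is $2$-regular, and the projective indecomposable characters of $\cO(P\times A_4)$ are $|P|(\chi_2 + \chi_3 + \chi_4)$ up to the $\cO P$-twist — more precisely $\prj(\cO(P \times A_4))$ is spanned by $\mathrm{reg}_P \times (\chi_2+\chi_3+\chi_4)/\dots$; I would instead use that $\mathbb{Z}\prj(\cO(P\times A_4)) \cong \mathbb{Z}\prj(\cO P)\otimes \mathbb{Z}\prj(\cO A_4)$ and that $L$ induces an isomorphism of these lattices. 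From $\mathbb{Z}\prj(\cO P)$ being rank one (generated by the regular character of $P$), the induced map on the $A_4$-side forces the "column" structure. The real work, and the main obstacle, is showing that the $S_4$-permutation $\sigma$ of the four $A_4$-characters is independent of $\mu \in \Irr(P)$: a priori $L$ could permute the characters $\mu \times \chi_j$ in a way that mixes different $\mu$'s, or applies different $\sigma$'s for different $\mu$. This is where Lemma~\ref{lem:roots_in_o} enters: writing out the condition that $L_K$ preserves $\CF(P \times A_4, B, \cO)$ — integrality of the transformed class functions at elements $(x, g)$ with $x \in P$ of large order and $g \in A_4$ — produces sums of $(2^n)^{\text{th}}$ roots of unity lying in $2^m\cO$, and the dichotomy in Lemma~\ref{lem:roots_in_o} forces the relevant coefficients to be "aligned", which is exactly what rigidifies $\sigma$ across all $\mu$ and shows $\mu \mapsto \nu$ is a well-defined bijection $\Irr(P) \to \Irr(P)$, hence (by Lemma~\ref{lem:isomcent} applied to the $\cO P$ factor, or directly) induced by an $\cO$-algebra automorphism $J$ of $\cO P$.

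Once the product structure is established, I would check that $(J, \sigma)$ genuinely assembles into $(J, I_{\sigma, \epsilon})$: the sign pattern on the $A_4$-side must be one of those allowed in Proposition~\ref{prop:self_A4}, i.e. $\epsilon \delta_j \delta_{\sigma(j)}$ with $\delta_1=\delta_2=\delta_3 = -\delta_4 = 1$, because restricting $L$ to the $\cO A_4$-part (formally, composing with evaluation/the obvious section) must yield a perfect self-isometry of $\cO A_4$; the global sign $\epsilon$ is the common sign coming from the rank-one projective lattice of $\cO P$ as in the proof of Lemma~\ref{lem:perfP}(b). Conversely every such $(J, I_{\sigma,\epsilon})$ is visibly a perfect self-isometry of the tensor product, so we get equality. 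I expect the forward direction — the independence of $\sigma$ from $\mu$, forced via Lemma~\ref{lem:roots_in_o} — to be the only genuinely delicate point; the rest is bookkeeping with characters of $A_4$ and the regular character of $P$.
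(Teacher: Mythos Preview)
Your proposal is correct and follows essentially the same route as the paper: use the projective lattice (the three PIM characters $\chi_{P_i}=(\sum_{\theta\in\Irr(P)}\theta)\otimes(\chi_i+\chi_4)$ and their inner products) to rigidify the $A_4$-side, then apply Lemma~\ref{lem:roots_in_o} via the integrality of $\frac{1}{12}\theta\otimes(\chi_1+\chi_2+\chi_3+3\chi_4)$ evaluated at $(x,1),(x,(123)),(x,(132))$ to force the remaining alignment, and finally invoke Lemma~\ref{lem:isomcent} to see that the resulting permutation of $\Irr(P)$ comes from an $\cO$-algebra automorphism. The only cosmetic difference is that the paper first composes with a suitable $(\Id,I_{\sigma,\epsilon})$ to normalise so that $I(\chi_{P_i})=\chi_{P_i}$, whence it deduces $I(\theta\otimes\chi_m)=J_m(\theta)\otimes\chi_m$ and then uses Lemma~\ref{lem:roots_in_o} to show $J_1=J_2=J_3=J_4$ --- i.e.\ it fixes the $A_4$-side first and proves the $P$-side map is independent of $m$, which is the dual phrasing of your ``$\sigma$ independent of $\mu$'' and slightly cleaner to execute.
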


\begin{proof}
We proceed as in the proof of~\cite[Theorem 2.11]{el2018}. The projective indecomposable characters are
\begin{align*}
\prj (\mathcal{O}(P\times A_4))=\{\chi_{P_1},\chi_{P_2},\chi_{P_3}\},\text{ where }\chi_{P_i}=\left(\sum_{\theta\in\Irr(P)}\theta\right)\otimes\left(\chi_i+\chi_4\right).
\end{align*}
Let $I$ be a perfect self-isometry of $\mathcal{O}(P\times A_4)$. Note that for $1\leq i\leq 3$, $\langle\chi_{P_i},\chi_{P_i}\rangle=2|P|$, where $\langle,\rangle$ is the usual inner product on $\ZZ\Irr(P\times A_4)$. Therefore, by considering elements of $\ZZ\prj(\mathcal{O}(P\times A_4))$ that also satisfy the above condition,
\begin{align}\label{align:im}
I(\chi_{P_i})=\pm\chi_{P_1},\pm\chi_{P_2},\pm\chi_{P_3},\pm(\chi_{P_1}-\chi_{P_2}),\pm(\chi_{P_1}-\chi_{P_3})\text{ or }\pm(\chi_{P_2}-\chi_{P_3}),
\end{align}
for $1\leq i\leq 3$. Now the characters in (\ref{align:im}) are transitively permuted by isometries of the form $(\Id,I_{\sigma,\epsilon})$ so we may assume that $I(\chi_{P_1})=\chi_{P_1}$. Next note that $\langle\chi_{P_i},\chi_{P_j}\rangle=|P|$ for $1\leq i\neq j\leq 3$ and so
\begin{align*}
I(\chi_{P_2})=\chi_{P_2},\chi_{P_3},\chi_{P_1}-\chi_{P_2}\text{ or }\chi_{P_1}-\chi_{P_3}.
\end{align*}
Therefore, by post-composing $I$ with $I_{\sigma,\epsilon}$ for
\begin{align*}
(\sigma,\epsilon)=(\Id,1),((23),1),((14),-1)\text{ or }((14)(23),-1)
\end{align*}
respectively, we may assume $I(\chi_{P_i})=\chi_{P_i}$ for $i=1,2$. The only $\chi$ in (\ref{align:im}) that satisfies $\langle\chi_{P_i},\chi\rangle=|P|$ for $i=1,2$ is $\chi_{P_3}$. So we have proved that $I(\chi_{P_i})=\chi_{P_i}$ for $1\leq i\leq 3$. Therefore, by considering $\langle\chi_{P_i},\chi\rangle$ for all $1\leq i\leq 3$ and $\chi\in\Irr(P\times A_4)$, there exist $J_m$ for $1\leq m\leq 4$, each a permutation of $\Irr(P)$, satisfying
\begin{align}\label{align:perm}
I(\theta\otimes\chi_m)=J_m(\theta)\otimes\chi_m,
\end{align}
for all $\theta\in\Irr(P)$. Until further notice we fix some $\theta\in\Irr(P)$. Note that
\begin{align*}
\frac{1}{12}\theta\otimes(\chi_1+\chi_2+\chi_3+3\chi_4)\in\CF(P\times A_4,\mathcal{O}(P\times A_4),\mathcal{O}).
\end{align*}
As $3$ is invertible in $\mathcal{O}$, this implies
\begin{align*}
\theta\otimes\left(\sum_{m=1}^4\delta_m\chi_m\right)\in4\CF(P\times A_4,\mathcal{O}(P\times A_4),\mathcal{O})
\end{align*}
(see Proposition~\ref{prop:self_A4} for the definition of $\delta_m$), and so
\begin{align}\label{align:im2}
I\left(\theta\otimes\left(\sum_{m=1}^4\delta_m\chi_m\right)\right)\in4\CF(P\times A_4,\mathcal{O}(P\times A_4),\mathcal{O}).
\end{align}
Evaluating (\ref{align:im2}) at $(x,1)$, $(x,(123))$ and $(x,(132))$, for some $x\in P$, and using (\ref{align:perm}) gives
\begin{align}
J_1(\theta)(x)+J_2(\theta)(x)+J_3(\theta)(x)+J_4(\theta)(x)&\in4\mathcal{O},\label{zeta1}\\
J_1(\theta)(x)+\omega J_2(\theta)(x)+\omega^2 J_3(\theta)(x)&\in4\mathcal{O},\label{zeta2}\\
J_1(\theta)(x)+\omega^2 J_2(\theta)(x)+\omega J_3(\theta)(x)&\in4\mathcal{O}\label{zeta3}.
\end{align}
Set $\zeta_m:=J_m(\theta)(x)$, for $1\leq m\leq 4$. Adding (\ref{zeta1}), (\ref{zeta2}) and (\ref{zeta3}) gives $3\zeta_1+\zeta_4\in4\mathcal{O}$. Now Lemma~\ref{lem:roots_in_o} tells us that $\zeta_1=\zeta_4$ as certainly $3\zeta_1+\zeta_4\neq0$. Therefore, by (\ref{zeta1}), $\zeta_2+\zeta_3\in2\mathcal{O}$. So again by Lemma~\ref{lem:roots_in_o} $\zeta_2=\zeta_3$ as $\zeta_2=-\zeta_3$ is prohibited by (\ref{zeta1}). Substituting into (\ref{zeta2}) gives $\zeta_1-\zeta_2\in4\mathcal{O}$. A final use of Lemma~\ref{lem:roots_in_o} tells us that $\zeta_1=\pm \zeta_2$ but $2\zeta_1\notin4\mathcal{O}$ and so we must have $\zeta_1=\zeta_2=\zeta_3=\zeta_4$.
\newline
\newline
We have shown that we may assume $I$ is of the form
\begin{align*}
I(\theta\otimes\chi_m)=J(\theta)\otimes\chi_m,
\end{align*}
for all $\theta\in\Irr(P)$ and $1\leq m\leq 4$, where $J$ is a permutation of $\Irr(P)$. In particular the $\mathcal{O}$-algebra automorphism of $Z(\mathcal{O}(P\times A_4))$ induced by $I$ leaves $\mathcal{O} P$ invariant. Therefore the permutation $J$ of $\Irr(P)$ must induce an automorphism of $\mathcal{O} P$ and the theorem is proved.
\end{proof}

For the remainder of this section we fix some $n\geq 1$. Set $H_n$ to be $(C_{2^n}\times C_{2^n})\rtimes S_3$, where the action of $S_3$ is given by permuting $a,b$ and $(ab)^{-1}$, where $a$ and $b$ are generators for the two cyclic factors. In addition set $G_n\leq H_n$ to be $(C_{2^n}\times C_{2^n})\rtimes C_3$, where the action of $C_3$ is given by cyclically permuting $a,b$ and $(ab)^{-1}$. For $1\leq i\leq 4$ set $\chi_i\in\Irr(G_n)$ to be the inflation to $G_n$ of the character of $A_4$ with the same label, and $\IBr(G_n)=\{\phi_1,\phi_2,\phi_3\}$ such that $\chi_i$ lifts $\phi_i$ for $1\leq i\leq 3$.

\begin{prop}\label{prop:PG}
Let $P$ be a finite abelian $2$-group and $n\geq 1$. Suppose $I$ is a permutation of $\Irr(P\times G_n)$ induced by a Morita auto-equivalence of $\mathcal{O}(P\times G_n)$. Then there exists $\sigma\in S_3$ and $J\in\Perf(\cO P)$, with all signs positive, such that $I$ satisfies $I(\theta\otimes\chi_i)=J(\theta)\otimes\chi_{\sigma(i)}$ for all $\theta\in\Irr(P)$ and $1\leq i\leq 3$.
\end{prop}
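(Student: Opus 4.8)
The plan is to follow the strategy of the proof of Theorem~\ref{thm:PA4}, using crucially that $I$ is induced by a genuine Morita auto-equivalence and not merely a perfect self-isometry (which it is, by Lemma~\ref{lem:morperf}): a Morita auto-equivalence permutes the projective indecomposable modules, and this rigidity replaces the normalisation step used for $A_4$. Write $N=C_{2^n}\times C_{2^n}\lhd G_n$, so that $\chi_1,\chi_2,\chi_3$ are precisely the linear characters of $G_n$, inflated from $G_n/N\cong C_3$, and let $\chi_4,\chi_5,\dots$ be the remaining irreducible characters of $G_n$, all of degree $3$. Each $\chi_j$ with $j\geq 4$ is induced from a nontrivial character of $N$; hence it has Brauer character $\phi_1+\phi_2+\phi_3$ and vanishes on the elements of order $3$. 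These are the only facts about the $\chi_j$ I shall need.

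First I would fix the shape of $I$ on $\chi_1,\chi_2,\chi_3$. The projective indecomposable characters of $\cO(P\times G_n)$ are $\chi_{Q_i}=\bigl(\sum_{\theta\in\Irr(P)}\theta\bigr)\otimes\bigl(\chi_i+\sum_{j\geq 4}\chi_j\bigr)$ for $i=1,2,3$, so $I$ permutes them: $I(\chi_{Q_i})=\chi_{Q_{\sigma(i)}}$ for some $\sigma\in S_3$. Comparing the supports of $\chi_{Q_i}$ and $\chi_{Q_{i'}}$ for $i\neq i'$ (they meet exactly in $\{\theta\otimes\chi_j:\theta\in\Irr(P),\ j\geq 4\}$) forces $I$ to stabilise that set, hence to restrict to a bijection $\{\theta\otimes\chi_i:\theta\}\to\{\theta\otimes\chi_{\sigma(i)}:\theta\}$ for each $i\leq 3$. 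So there exist bijections $J_1,J_2,J_3$ of $\Irr(P)$ with $I(\theta\otimes\chi_i)=J_i(\theta)\otimes\chi_{\sigma(i)}$ for all $\theta$ and $1\leq i\leq 3$, with positive signs.

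Next I would prove $J_1=J_2=J_3$; I expect this to be the crux. Fix $\theta\in\Irr(P)$. Since $\frac{1}{|G_n|}\sum_{\chi\in\Irr(G_n)}\chi(1)\chi$ is the indicator function of $1_{G_n}$ and $3\in\cO^{\times}$, the class function $F:=\theta\otimes\bigl(\chi_1+\chi_2+\chi_3+3\sum_{j\geq 4}\chi_j\bigr)$ lies in $2^{2n}\CF(P\times G_n,\cO(P\times G_n),\cO)$, so $I_K(F)$ does too. Evaluating $I_K(F)$ at a pair $(x,g)$ with $x\in P$ and $g$ of order $3$ kills every contribution of a $\chi_j$ with $j\geq 4$ (by the previous paragraph, $I$ carries such a constituent to one involving a degree-$3$ character, which vanishes at $g$), and leaves, as $g$ ranges over the two conjugacy classes of elements of order $3$,
\[
\eta_1+\omega\eta_2+\omega^2\eta_3\in 2^{2n}\cO,\qquad \eta_1+\omega^2\eta_2+\omega\eta_3\in 2^{2n}\cO,
\]
where $\eta_l:=J_{\sigma^{-1}(l)}(\theta)(x)$. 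Subtracting these (the difference $\omega-\omega^2$ of primitive cube roots of unity is a unit, its square being $-3$) gives $\eta_2-\eta_3\in 2^{2n}\cO\subseteq 4\cO$, which for roots of unity of $2$-power order forces $\eta_2=\eta_3$; then the first relation yields $\eta_1-\eta_2\in 4\cO$, so $\eta_1=\eta_2=\eta_3$. (Equivalently one adds the two relations and invokes Lemma~\ref{lem:roots_in_o} with $m=2$.) As $\theta$ and $x$ were arbitrary, $J:=J_1=J_2=J_3$. The delicate point here, compared with the $A_4$ case of Theorem~\ref{thm:PA4}, is that there are now arbitrarily many extra characters $\chi_j$ ($j\geq 4$) whose images under $I$ cannot be controlled; testing the perfect isometry only against the indicator of $1_{G_n}$ and only on $2$-regular elements is exactly enough, because one only needs the three maps $J_i$ to agree with one another, not with anything on the $\chi_j$-side.

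Finally I would show $J\in\Perf(\cO P)$ with positive signs, i.e. $J\in\Aut(\cO P)$ by Lemma~\ref{lem:perfP}(b). By Lemma~\ref{lem:isomcent}, $I$ induces an $\cO$-algebra automorphism $\phi_I$ of $Z(\cO(P\times G_n))=\cO P\otimes Z(\cO G_n)$ acting on character idempotents by $e_{\theta\otimes\chi}\mapsto e_{I(\theta\otimes\chi)}$. For $p\in P$ the central element $p\otimes 1$ has $e_{\theta\otimes\chi}$-coefficient $\theta(p)$; applying $\phi_I$ and then $\mathrm{id}\otimes\omega_{\chi_1}$, where $\omega_{\chi_1}\colon Z(\cO G_n)\to\cO$ is the central character of the trivial $\cO G_n$-module (the augmentation, so $\cO$-valued), produces the element $\bigl(J^{-1}(\theta)(p)\bigr)_{\theta\in\Irr(P)}$ of $Z(KP)=\prod_{\theta}Ke_{\theta}$ and shows it lies in $\cO P$. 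Hence the coordinate permutation $\Phi$ of $Z(KP)$ with $\Phi(e_{\theta})=e_{J(\theta)}$ stabilises $\cO P$; running the same argument for $I^{-1}$ gives $\Phi(\cO P)=\cO P$, so $\Phi|_{\cO P}$ is an $\cO$-algebra automorphism of $\cO P$ realising $J$ on $\Irr(P)$. Thus $J\in\Aut(\cO P)\subseteq\Perf(\cO P)$, which completes the proof.
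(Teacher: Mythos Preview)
Your proof is correct and follows the same three-step strategy as the paper's: use Morita-invariant decomposition data to show that $I$ permutes the sets $\{\theta\otimes\chi_i:\theta\in\Irr(P)\}$ for $i\leq 3$, use an integrality argument on a suitable test class function to force $J_1=J_2=J_3$, and conclude that the common $J$ lies in $\Perf(\cO P)$ with positive signs. The implementations differ only in minor details: for the first step you argue via the permutation of projective indecomposables and their supports, whereas the paper observes directly that $\{\theta\otimes\chi_i:\theta\}$ is the set of characters reducing to $\phi_i$; for the second you test against the indicator of $1_{G_n}$ and evaluate only at elements of order $3$, while the paper uses the inflated indicator of $1_{A_4}$ (i.e.\ $\theta\otimes(\chi_1+\chi_2+\chi_3+3\chi_4)$) and evaluates at three points as in Theorem~\ref{thm:PA4}; and for the last step you extract $J\in\Aut(\cO P)$ from the centre automorphism $\phi_I$ via a central character, whereas the paper checks directly that each $J_i$ preserves $\CF(P,\cO P,\cO)$.
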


\begin{proof}
For $1\leq i\leq 3$ set $X_i:=\{\theta\otimes\chi_i|\theta\in\Irr(P)\}$. $I$ must permute the $X_i$'s as each $X_i$ is exactly the subset of $\Irr(P\times G_n)$ of characters that reduce to the Brauer character $\phi_i$. By composing with a Morita equivalence induced by tensoring with a linear character of $G_n$ and/or conjugation by some element of $H_n$, we may assume that $I$ leaves each $X_i$ invariant. Now for $1\leq i\leq 3$, we define $J_i:\Irr(P)\to\Irr(P)$ by
\begin{align*}
I(\theta\otimes\chi_i)=J_i(\theta)\otimes\chi_i.
\end{align*}
By Lemma~\ref{lem:morperf}, $I$ must be a perfect isometry. As
\begin{align*}
\sum_{\theta\in\Irr(P)}\alpha_\theta(\theta\otimes\chi_i)\in\CF(P\times A_4,\mathcal{O}(P\times A_4),\mathcal{O})
\end{align*}
if and only if
\begin{align*}
\sum_{\theta\in\Irr(P)}\alpha_\theta\theta\in\CF(P,\mathcal{O}P,\mathcal{O}),
\end{align*}
each $J_i$ must be a perfect isometry. Given that the same is true for $I$, $J_i$ must also have all signs positive. Now for each $\theta\in\Irr(P)$
\begin{align*}
\frac{1}{12}\theta\otimes(\chi_1+\chi_2+\chi_3+3\chi_4)\in\CF(P\times A_4,\mathcal{O}(P\times A_4),\mathcal{O}).
\end{align*}
Proceeding exactly as in the proof of Theorem~\ref{thm:PA4} proves that $J_1=J_2=J_3$ and hence the result is proved.
\end{proof}

Note that no non-trivial irreducible character of $C_{2^n}\times C_{2^n}$ is $G_n$-stable. Therefore for each $\chi\in\Irr(G_n)\backslash\{\chi_1,\chi_2,\chi_3\}$, $\chi$ reduces to the Brauer character $\phi_1+\phi_2+\phi_3$. Moreover, $|\Irr(G_n)\backslash\{\chi_1,\chi_2,\chi_3\}|=(2^{2n}-1)/3$.
\newline
\newline
Additionally we recall the decomposition matrix of $B_0(\cO A_5)$, where we also set up some labelling of characters and Brauer characters.

\begin{align*}
\begin{tabular}{|c||c|c|c|c|} \hline
& $\mu_1$ & $\mu_2$ & $\mu_3$ \\ \hline
$\psi_1$ & $1$ & $0$ & $0$ \\
$\psi_2$ & $1$ & $1$ & $0$ \\
$\psi_3$ & $1$ & $0$ & $1$ \\
$\psi_4$ & $1$ & $1$ & $1$ \\ \hline
\end{tabular}
\end{align*}

\begin{prop}\label{prop:GA5}
Let $n\geq 1$. Suppose $I$ is a permutation of $\Irr(B_0(\cO(G_n\times A_5)))$ induced by a Morita auto-equivalence of $B_0(\cO(G_n\times A_5))$. Then the set $\{\chi_i\otimes\psi_j|1\leq i\leq 3, 1\leq j\leq 4\}$ is left invariant by $I$.
\end{prop}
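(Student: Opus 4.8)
The plan is to cut out the $12$-element set $S:=\{\chi_i\otimes\psi_j\ :\ 1\le i\le 3,\ 1\le j\le 4\}$ intrinsically, using only data preserved by every Morita auto-equivalence: the decomposition matrix of $B:=B_0(\cO(G_n\times A_5))$ together with the compatible permutations of $\Irr(B)$ and $\IBr(B)$ it induces. Given such a description, $I$-invariance of $S$ is immediate.

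First I would record the modular data. As $C_{2^n}\times C_{2^n}\lhd G_n$ is a Sylow $2$-subgroup, $\cO G_n$ has a single block, so $B=\cO G_n\otimes_{\cO}B_0(\cO A_5)$; hence $\Irr(B)=\{\chi\otimes\psi_j:\chi\in\Irr(G_n),\,1\le j\le 4\}$, $\IBr(B)=\{\phi_i\otimes\mu_k:1\le i\le 3,\,1\le k\le 3\}$, and decomposition numbers factorise as $d^{B}_{\chi\otimes\psi_j,\,\phi_i\otimes\mu_k}=d^{G_n}_{\chi,\phi_i}\,d^{A_5}_{\psi_j,\mu_k}$. By the remarks preceding the proposition, $\chi_i$ reduces to $\phi_i$ for $1\le i\le 3$, while every other irreducible character of $G_n$ reduces to $\phi_1+\phi_2+\phi_3$; with the displayed decomposition matrix of $B_0(\cO A_5)$ this determines every decomposition number of $B$, all of which are $0$ or $1$.

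The crux is the following invariant. The Morita auto-equivalence inducing $I$ also induces a permutation $\tau$ of $\IBr(B)$ with $d^{B}_{I(\chi),\tau(\mu)}=d^{B}_{\chi,\mu}$ for all $\chi,\mu$ — the principle already invoked in the proof of Proposition~\ref{prop:PG}. Let $\mathcal{B}_1\subseteq\IBr(B)$ consist of the Brauer characters arising as the multiplicity-free reduction of an ordinary irreducible character; this set is $\tau$-stable. An ordinary character $\chi\otimes\psi_j$ reduces irreducibly exactly when both $\chi$ and $\psi_j$ do, which forces $\chi\in\{\chi_1,\chi_2,\chi_3\}$ and $j=1$; hence $\mathcal{B}_1=\{\phi_1\otimes\mu_1,\phi_2\otimes\mu_1,\phi_3\otimes\mu_1\}$, of size $3$. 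For $\chi\in\Irr(B)$ put $n(\chi):=\sum_{\mu\in\mathcal{B}_1}d^{B}_{\chi,\mu}$; since $\tau(\mathcal{B}_1)=\mathcal{B}_1$ and $I,\tau$ are compatible, $n(I(\chi))=n(\chi)$. A one-line computation from the factorised decomposition numbers, using that the $\mu_1$-column of the $A_5$ decomposition matrix is all ones, gives $n(\chi_i\otimes\psi_j)=1$ for all $1\le i\le 3$, $1\le j\le 4$ — the only contributing term is $\phi_i\otimes\mu_1$ — whereas $n(\chi\otimes\psi_j)=3$ whenever $\chi\in\Irr(G_n)\setminus\{\chi_1,\chi_2,\chi_3\}$. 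Therefore $S=\{\chi\in\Irr(B):n(\chi)=1\}$ is $I$-invariant, as required.

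The only step beyond routine bookkeeping with the two small decomposition matrices is the standard fact that a Morita equivalence carries a matched bijection of Brauer characters respecting decomposition numbers (the functor is exact, preserves simple modules and $\cO$-lattices, and commutes with $(-)\otimes_{\cO}k$); this is no stronger than what underlies Proposition~\ref{prop:PG}. Alternatively one may use only that a Morita equivalence permutes the projective indecomposable characters and is an isometry (Lemma~\ref{lem:morperf}): then $\{\Phi_\mu:\mu\in\mathcal{B}_1\}$, being the set of projective indecomposable characters of (strictly) maximal norm, is $I$-invariant, and $n(\chi)=\langle\chi,\sum_{\mu\in\mathcal{B}_1}\Phi_\mu\rangle$ visibly satisfies $n(I(\chi))=n(\chi)$. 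Everything else is verification with data already in the excerpt.
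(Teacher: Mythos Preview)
Your argument is correct, and it takes a genuinely different route from the paper's. The paper proceeds in two stages: first it observes that $\{\chi_i\otimes\psi_j:1\le i\le 3,\ 1\le j\le 3\}$ is cut out as the set of characters whose Brauer reduction has at most two constituents; then, to separate $\{\chi_i\otimes\psi_4\}$ from $\{\alpha\otimes\psi_1:\alpha\notin\{\chi_1,\chi_2,\chi_3\}\}$ (both of which reduce to a sum of three distinct Brauer characters), it computes the invariant $\sum_{\phi,\xi}D_{\chi,\phi}D_{\xi,\phi}$, i.e.\ the $\chi$-row sum of $DD^T$, and checks these two families give different values. Your approach instead singles out the $\tau$-stable set $\mathcal{B}_1=\{\phi_i\otimes\mu_1\}$ of liftable Brauer characters and uses the single invariant $n(\chi)=\sum_{\mu\in\mathcal{B}_1}d_{\chi,\mu}$ to isolate $S$ in one stroke; the key simplification is that the $\mu_1$-column of the $A_5$ decomposition matrix is constant equal to $1$, so $n(\chi\otimes\psi_j)$ depends only on whether $\chi$ is linear. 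This is shorter and avoids the case split; the paper's second invariant, on the other hand, does not require first identifying an intrinsic subset of $\IBr(B)$. Your alternative phrasing via the projective indecomposable characters of maximal Cartan norm is also valid, since the diagonal Cartan entries factor and $C^{A_5}_{\mu_1,\mu_1}=4>2=C^{A_5}_{\mu_2,\mu_2}=C^{A_5}_{\mu_3,\mu_3}$.
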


\begin{proof}
We first note that $I$ leaves $\{\chi_i\otimes\psi_j|1\leq i\leq 3, 1\leq j\leq 3\}$ invariant as this is exactly the subset of $\Irr(B_0(\cO(G_n\times A_5)))$ consisting of characters that reduce to a sum of $1$ or $2$ irreducible Brauer characters. Similarly
\begin{align*}
\{\chi_i\otimes\psi_4|1\leq i\leq 3\}\cup\{\alpha\otimes\psi_1|\alpha\in\Irr(G_n)\backslash\{\chi_1,\chi_2,\chi_3\}\}
\end{align*}
is exactly the subset of $\Irr(B_0(\cO(G_n\times A_5)))$ consisting of characters that reduce to a sum of $3$ distinct irreducible Brauer characters and so is also left invariant by $I$. Due to the comments proceeding the proposition, for $1\leq i\leq 3$
\begin{align*}
&\sum_{\substack{\phi\in\IBr(B_0(\cO(G_n\times A_5)))\\ \xi\in\Irr(B_0(\cO(G_n\times A_5)))}}D_{\chi_i\otimes\psi_4,\phi}D_{\xi,\phi}=\sum_{\substack{j=1\\ \xi\in\Irr(B_0(\cO(G_n\times A_5)))}}^3D_{\xi,\phi_i\otimes\mu_j}\\
=&\left(\sum_{\chi\in\Irr(\cO G_n)}d^G_{\chi,\phi_i}\right)\left(\sum_{\substack{j=1\\ \psi\in\Irr(B_0(\cO A_5))}}^3d^A_{\psi,\mu_j}\right)=\left(\frac{2^{2n}-1}{3}+1\right)8=8\frac{2^{2n}+2}{3}
\end{align*}
and for $\alpha\in\Irr(G_n)\backslash\{\chi_1,\chi_2,\chi_3\}$
\begin{align*}
&\sum_{\substack{\phi\in\IBr(B_0(\cO(G_n\times A_5)))\\ \xi\in\Irr(B_0(\cO(G_n\times A_5)))}}D_{\alpha\otimes\psi_1,\phi}D_{\xi,\phi}=\sum_{\substack{j=1\\ \xi\in\Irr(B_0(\cO(G_n\times A_5)))}}^3D_{\xi,\phi_j\otimes\mu_1}\\
=&\left(\sum_{\substack{j=1\\ \chi\in\Irr(\cO G_n)}}^3d^G_{\chi,\phi_j}\right)\left(\sum_{\psi\in\Irr(B_0(\cO A_5))}d^A_{\psi,\mu_1}\right)=3\left(\frac{2^{2n}-1}{3}+1\right)4=4(2^{2n}+2),
\end{align*}
where $D$, $d^G$ and $d^A$ are the decomposition matrices of $B_0(\cO(G_n\times A_5))$, $\cO G_n$ and $B_0(\cO A_5)$ respectively. However, since $I$ is induced by a Morita auto-equivalence,
\begin{align*}
\sum_{\substack{\phi\in\IBr(B_0(\cO(G_n\times A_5)))\\ \xi\in\Irr(B_0(\cO(G_n\times A_5)))}}D_{\chi\otimes\psi,\phi}D_{\xi,\phi}=\sum_{\substack{\phi\in\IBr(B_0(\cO(G_n\times A_5)))\\ \xi\in\Irr(B_0(\cO(G_n\times A_5)))}}D_{I(\chi\otimes\psi),\phi}D_{\xi,\phi},
\end{align*}
for all $\chi\in\Irr(\cO G)$ and $\psi\in\Irr(B_0(A_5))$. Therefore $\{\chi_i\otimes\psi_4|1\leq i\leq 3\}$ is left invariant by $I$ and the claim is proved.
\end{proof}

\section{Picard groups of blocks with normal defect group and related blocks}
\label{normal_defect}

We begin this section with some more notation. Let $G$ be a finite group and $B$ a block of $\cO G$. We denote by $e_B\in \cO G$ the block idempotent corresponding to $B$ and for each $\chi\in\Irr(G)$ we denote by $e_\chi\in KG$ the character idempotent corresponding to $\chi$. For this section we extend the meaning of $\Pic(B)$ and $\cT(B)$ to include the possibility that $B$ is a direct sum of blocks. In this case $\cT(B)$ denotes the subset of $\Pic(B)$ consisting of modules that are direct sums of trivial source modules.

An important tool in determining many of the Picard groups in this section will be Weiss' criterion, originally stated over the ring of $p$-adic integers. See~\cite[Remark 1.8]{bkl18} for a discussion of the generalisation of the ground ring. For all of this section up until Theorem \ref{thm:PicNorm} the prime $p$ may be taken to be arbitrary.

For $G$ a finite group, $H$ a subgroup and $M$ an $\cO G$-$\cO G$-bimodule, we define
\begin{align*}
{}^HM&=\{m\in M|g.m=m\text{ for all }g\in H\},\\
M^H&=\{m\in M|m.g=m\text{ for all }g\in H\}.
\end{align*}
In particular, if $M$ is a left $\cO G$-module we adopt the above notation by viewing $M$ as an $\cO G$-$\cO\{1\}$-bimodule.

\begin{prop}[Weiss~\cite{we88}]\label{prop:Weiss}
Let $P$ be a finite $p$-group, $M$ a finitely generated $\cO P$-module and $Q\lhd P$ such that $\Res_Q^P(M)$ is free and ${}^QM$ is a permutation $\cO(P/Q)$-module. Then $M$ is a permutation $\cO P$-module.
\end{prop}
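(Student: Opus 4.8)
The plan is to reproduce the structure of Weiss' original argument, isolating its one genuinely hard step, and to proceed by induction on $|P|$. If $Q=1$ then $M={}^QM$ is a permutation $\cO(P/Q)=\cO P$-module by hypothesis, and if $Q=P$ then $\Res_Q^P(M)=M$ is free, hence certainly a permutation module. So assume $1\neq Q\neq P$ and choose a subgroup $Z\leq Q\cap Z(P)$ of order $p$ (possible since $Q\lhd P$ forces $Q\cap Z(P)\neq1$). The first goal is to reduce to the case $|Q|=p$; note that a normal subgroup of order $p$ is automatically central, as $P/C_P(Q)$ embeds in the $p'$-group $\Aut(C_p)$. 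Put $\bar P=P/Z$, $\bar Q=Q/Z$ and $\bar M={}^ZM$. Since $Z$ acts freely on the standard basis $Q$ of $\cO Q$, one computes ${}^Z(\cO Q)\cong\cO(Q/Z)$ as left $\cO(Q/Z)$-modules, whence $\Res_{\bar Q}^{\bar P}(\bar M)$, which is identified with ${}^Z(\Res_Q^P M)$, is $\cO(Q/Z)$-free; and ${}^{\bar Q}\bar M={}^QM$ is a permutation $\cO(\bar P/\bar Q)=\cO(P/Q)$-module by hypothesis. As $|\bar P|<|P|$, the inductive hypothesis gives that ${}^ZM$ is a permutation $\cO(P/Z)$-module, and $\Res_Z^P M$ is free (being $\Res_Z^Q$ of a free module); so it suffices to treat $Q=Z$ of order $p$.

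So assume $Q=Z=\langle\zeta\rangle$ is central of order $p$, write $N=1+\zeta+\dots+\zeta^{p-1}$, and let $I\subseteq\cO Z$ be the augmentation ideal, so $IM=(\zeta-1)M$. Because $\Res_Z^P M$ is free, the map $m\mapsto Nm$ induces an isomorphism $M/IM\cong{}^ZM$ of $\cO(P/Z)$-modules, so $M/IM$ is a permutation $\cO(P/Z)$-module. I would then induct on $\rk_\cO(M)$, the case $M=0$ being vacuous. Write $M/IM=\bigoplus_j\cO[(P/Z)/\bar R_j]$ and consider one transitive constituent, through the image $\bar m$ of some $m\in M$, with $\Stab_{P/Z}(\bar m)=\bar R$; let $Z\leq R\leq P$ be its preimage. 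The heart of the argument is to produce $m'\in M$ lying over $\bar m$ (i.e. with $Nm'=Nm$) whose $\cO P$-span $\cO Pm'$ is a direct summand of $M$ isomorphic to a permutation module $\cO[P/S]$ for some complement $S$ of $Z$ in $R$. Granting this, the complementary summand $M''$ of $\cO Pm'$ in $M$ still has $\Res_Z^P M''$ free (summands of free $\cO Z$-modules are free, $\cO Z$ being local) and ${}^ZM''$ a permutation $\cO(P/Z)$-module (a summand of the permutation module ${}^ZM$, hence itself one: for a $p$-group the indecomposable permutation modules are exactly the $\cO[P/U]$, so Krull--Schmidt applies), so $M''$ — and therefore $M=\cO Pm'\oplus M''$ — is a permutation module by the rank induction.

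The outstanding point, which I expect to be the main obstacle, is the construction of $\cO Pm'$. Any $m'$ over $\bar m$ has the form $m'=m+u$ with $u\in\ker(N|_M)=IM$, and then for $g\in R$ one has $(g-1)m'=(g-1)m+(g-1)u$ with $(g-1)m\in IM$; so finding $m'$ fixed by a prescribed complement $S\leq R$ of $Z$ amounts to trivialising, by a coboundary coming from some $u\in IM$, the restriction to $S$ of the $1$-cocycle $g\mapsto(g-1)m$ of $R$ with values in $IM$ — and one must also know that such a complement $S$ of $Z$ in $R$ exists at all. Neither is formal: both are forced by the tension between $\Res_Z^P M$ being free and $M/IM$ being a permutation module, and extracting them — by choosing the constituent $\bar R_j$ judiciously (e.g. of maximal order), restricting the whole situation to $R$, and exploiting that $IM\cong M/{}^ZM$ is $(\zeta-1)$-torsion-free even though it need not be free over $\cO R$ — is precisely the delicate core of Weiss' argument in~\cite{we88}, which I would follow at this step. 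Everything else is the reduction and bookkeeping above.
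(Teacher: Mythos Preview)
The paper does not give a proof of this proposition: it is stated with attribution to Weiss~\cite{we88} and used as a black box. So there is no ``paper's own proof'' to compare against; your proposal is a sketch of Weiss' original argument rather than a replacement for anything in the present article.

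As a sketch of Weiss' proof your reductions are sound: the base cases, the passage to a central subgroup $Z\leq Q\cap Z(P)$ of order $p$, the identification ${}^Z(\cO Q)\cong\cO(Q/Z)$, and the isomorphism $M/IM\cong{}^ZM$ via the trace element when $\Res_Z^PM$ is free are all correct, and you are right that summands of permutation $\cO(P/Z)$-modules are again permutation modules by Krull--Schmidt. You have also correctly located the genuine content: producing a lift $m'$ with $\cO Pm'\cong\cO[P/S]$ a direct summand, together with the existence of a complement $S$ to $Z$ in $R$. But you do not actually carry this step out --- you defer explicitly to~\cite{we88} --- so what you have written is an accurate roadmap rather than a self-contained proof. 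If the intent was only to indicate why the result is plausible and where the difficulty lies, that is achieved; if the intent was to supply an independent proof, the cohomological/rigidity core is still missing.
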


Before we apply Weiss' criterion we need a proposition. Let $G$ be a finite group, $P$ a normal $p$-subgroup and $B$ a block of $\cO G$. We denote by $B^P$ the sum of blocks of $\cO(G/P)$ dominated by $B$, that is those blocks not annihilated by the image of $e_B$ under the natural $\cO$-algebra homomorphism $p_P:\cO G\to \cO(G/P)$, where $e_B\in\cO G$ is the block idempotent corresponding to $B$. Finally, we set
\begin{align*}
\Irr(B)^P:=\{\chi\in\Irr(B)|P\text{ is contained in the kernel of }\chi\}.
\end{align*}
Before we state our proposition we recall an important theorem of Brou\'e (see~\cite[Th\'eor\`eme 1.2]{br90}).

\begin{thm}\label{thm:brouechars}
Let $B$, $C$ be blocks of finite groups and $M$ a $B$-$C$-bimodule that's projective as both a left $B$-module and a right $C$-module. Then $M$ induces a Morita equivalence between $B$ and $C$ if and only if $KM$ induces a Morita equivalence between $KB$ and $KC$.
\end{thm}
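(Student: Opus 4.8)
The forward implication is immediate: if $M\otimes_C M^\vee\cong B$ and $M^\vee\otimes_B M\cong C$ as bimodules, where $M^\vee=\Hom_{\cO}(M,\cO)$, then applying $K\otimes_{\cO}-$ shows that $KM$ induces a Morita equivalence between $KB$ and $KC$. So the content is the converse, and the plan for it is as follows. Recall that $B$ and $C$ are symmetric $\cO$-algebras. Since $M$ is projective as a left $B$-module and as a right $C$-module, it follows that $M^\vee$ is projective as a right $B$-module and as a left $C$-module, that $M^\vee\otimes_B-$ is right adjoint to $M\otimes_C-\colon C\text{-Mod}\to B\text{-Mod}$, and that the counit of this adjunction, together with the counit of the symmetric adjunction obtained by exchanging $(B,M)$ with $(C,M^\vee)$, are the evaluation maps $\varepsilon\colon M\otimes_C M^\vee\to B$ and $\varepsilon'\colon M^\vee\otimes_B M\to C$. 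It therefore suffices to prove that $\varepsilon$ and $\varepsilon'$ are isomorphisms of bimodules: tensoring $\varepsilon$ over $B$ with an arbitrary left $B$-module and $\varepsilon'$ over $C$ with an arbitrary left $C$-module then exhibits $M\otimes_C-$ and $M^\vee\otimes_B-$ as mutually inverse equivalences, i.e.\ $M$ induces a Morita equivalence between $B$ and $C$.

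Next I would pass to $K$. Since $KM$ induces a Morita equivalence with inverse bimodule $(KM)^\vee=K\otimes_{\cO}M^\vee$, the maps $K\varepsilon$ and $K\varepsilon'$ are its counits at $KB$ and $KC$, hence isomorphisms. As $M$ is projective on both sides, $M\otimes_C M^\vee$ and $M^\vee\otimes_B M$ are finitely generated and $\cO$-free, and a homomorphism of finitely generated $\cO$-free modules that becomes an isomorphism after $\otimes_{\cO}K$ is injective with $\cO$-torsion cokernel. Hence $\varepsilon$ is injective and identifies $M\otimes_C M^\vee$ with a two-sided ideal $I$ of $B$ with $KI=KB$, and likewise $\varepsilon'$ identifies $M^\vee\otimes_B M$ with a two-sided ideal $I'$ of $C$ with $KI'=KC$. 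Moreover $I$ is the trace ideal $\Tr_B({}_BM)=\sum_{f\in\Hom_B(M,B)}f(M)$; projectivity of ${}_BM$ and the dual basis lemma give $M=IM$, whence $I^2=I$, and projectivity of $M$ and of $M^\vee$ on both sides makes $I\cong M\otimes_C M^\vee$ projective as a left and as a right $B$-module. The same holds for $I'$ inside $C$.

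It then remains to show $I=B$ and $I'=C$; this is the one step that genuinely uses that $B$ and $C$ are blocks rather than arbitrary symmetric orders, and I expect it to be the main obstacle. By a standard fact about idempotent ideals, a two-sided ideal that is idempotent and projective both as a left and as a right module is of the form $Re$ for a central idempotent $e$; applied to $I$ this gives $I=Be$ with $e\in Z(B)$. As $B=\cO Gb$ is a block it is indecomposable as an $\cO$-algebra, so $e\in\{0,1\}$; since $KI=KB\neq 0$ we get $e=1$, i.e.\ $I=B$, so $\varepsilon$ is surjective and therefore an isomorphism. The identical argument applied to $I'$ in $C$ shows $\varepsilon'$ is an isomorphism, and the first paragraph concludes. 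The remaining points to be careful about are the bookkeeping that projectivity on each side and $\cO$-freeness propagate correctly through the tensor products and $\cO$-duals, and the check that $\varepsilon$ and $\varepsilon'$ really are the counits of the stated adjunctions; everything else is formal.
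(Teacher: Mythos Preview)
The paper does not prove this result; it is quoted with a citation to Brou\'e~\cite[Th\'eor\`eme~1.2]{br90}. Your overall strategy---show that the counits $\varepsilon,\varepsilon'$ are injective with $\cO$-torsion cokernel by passing to $K$, identify the image $I=\varepsilon(M\otimes_C M^\vee)$ with the trace ideal of ${}_BM$, and then argue $I=B$ using that $B$ is a block---is sound and is close in spirit to how such results are proved.

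The one genuine issue is the ``standard fact'' you invoke: that a two-sided ideal which is idempotent and projective on both sides must be generated by a central idempotent. As stated for arbitrary rings this is false. Take $R$ to be the $k$-algebra of upper-triangular $2\times 2$ matrices and $I=e_{11}R$ the ideal of matrices with vanishing second row; then $I$ is two-sided, $I^2=I$, and both ${}_RI\cong (Re_{11})^2$ and $I_R=e_{11}R$ are projective, yet the only central idempotents of $R$ are $0$ and~$1$. (This $R$ is not symmetric, which is the point.) So the step does not follow from the hypotheses you have isolated, and you need to use more than indecomposability of $B$.

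The repair uses precisely the block structure you anticipated. Because the Cartan matrix of a block is nonsingular, finitely generated projective $B$-lattices are determined up to isomorphism by their $K$-spans; hence $KI\cong KB$ as left $KB$-modules forces ${}_BI\cong{}_BB$. Now if $I\neq B$, pick a maximal left ideal $\mathfrak m\supseteq I$ and set $S=B/\mathfrak m$; then $IS=0$, and for any left $B$-linear $f\colon I\to S$ one has $f(I)=f(I\cdot I)\subseteq I\cdot f(I)\subseteq IS=0$, so $\Hom_B(I,S)=0$, contradicting ${}_BI\cong{}_BB$. Thus $I=B$, and symmetrically $I'=C$, after which your first paragraph finishes the proof.
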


\begin{prop}\label{prop:indmor}$ $
\begin{enumerate}[(a)]
\item The inflation map $\Inf:\Irr(G/P)\to\Irr(G)$ induces a bijection between $\Irr(B^P)$ and $\Irr(B)^P$.
\item Suppose $M$ is a $B$-$B$-bimodule inducing a Morita auto-equivalence of $B$ that permutes the elements of $\Irr(B)^P$. Then ${}^PM=M^P$ induces a Morita auto-equivalence of $B^P$. Furthermore, the permutation of $\Irr(B^P)$ induced by ${}^PM$ is equal to the permutation that $M$ induces on $\Irr(B)^P$, once these two sets have been identified using part (a).
\end{enumerate}
\end{prop}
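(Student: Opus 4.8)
The plan is to prove (a) first, then use it to set up the character-theoretic bookkeeping needed for (b), and finally invoke Theorem~\ref{thm:brouechars} to pass from a statement about characters over $K$ to a statement about bimodules over $\cO$. For part (a), I would recall that the primitive idempotents of $Z(KG)$ lying in $KB$ are the $e_\chi$ for $\chi \in \Irr(B)$, and that the algebra homomorphism $p_P : \cO G \to \cO(G/P)$ (extended $K$-linearly) sends $e_\chi$ either to $0$ or to a character idempotent of $K(G/P)$; it sends $e_\chi$ to a nonzero idempotent precisely when $P \leq \ker \chi$, in which case $p_K(e_\chi) = e_{\bar\chi}$ where $\bar\chi$ is the character of $G/P$ with $\Inf(\bar\chi) = \chi$. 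Conversely every $\bar\chi \in \Irr(G/P)$ arises this way, and $\bar\chi$ lies in $B^P$ exactly when $e_B$ is not annihilated, i.e. exactly when $\chi = \Inf(\bar\chi)$ lies in $\Irr(B)$. Thus $\Inf$ restricts to the asserted bijection $\Irr(B^P) \leftrightarrow \Irr(B)^P$; one should note $\Inf$ is injective and preserves degrees, so this is a genuine bijection of sets (it need not be compatible with block decomposition of $B^P$ into single blocks, which is why we allow $B^P$ to be a sum of blocks, matching the conventions set up earlier in the section).

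For part (b), the key observation is that taking $P$-fixed points on an $\cO G$-$\cO G$-bimodule (equivalently, the $P$-invariants on either side — these coincide here because $M$ restricted to $P \times P$, or even to $\Delta P$, is a permutation-type module so fixed points on left and right agree; more carefully ${}^PM = M^P$ holds because $M$ induces a Morita auto-equivalence of $B$ and $P$ acts the same way via left and right multiplication after identifying with the relevant subpairs — I would spell this out using that $M$ is a direct summand of $\cO G \otimes_{\cO D'} \cO G$ for a suitable $D'$). The functor $N \mapsto {}^PN$ on bimodules sends $M$ to an $\cO(G/P)$-$\cO(G/P)$-bimodule, and $({}^PM) \otimes_{\cO(G/P)} ({}^P M') \cong {}^P(M \otimes_{\cO G} M')$ for bimodules with $M' $ appropriately projective — so ${}^PM$ at least has a good chance of inducing a self-equivalence of $\cO(G/P)$, and restricting idempotents cuts this down to $B^P$. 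The cleanest route is the following: because $M$ is a Morita auto-equivalence bimodule of $B$ it is projective as a left and as a right $B$-module, and taking $P$-fixed points of a projective $\cO G$-module yields a projective $\cO(G/P)$-module (a projective $\cO G$-module is a summand of a free one, $({}^P\cO G)\cong \cO(G/P)$, and the functor is additive), so ${}^PM e_{B^P}$ is projective as a left and right $B^P$-module. Then by Theorem~\ref{thm:brouechars} it suffices to check that $K({}^PM e_{B^P})$ induces a Morita auto-equivalence of $KB^P$.

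The remaining, and I expect main, step is the character computation: one must show that $K({}^PM)$ realizes on $\Irr(B^P)$ exactly the permutation that $M$ realizes on $\Irr(B)^P$ under the identification of part (a), and deduce that this is an invertible (indeed signed-permutation, in fact unsigned since $M$ is genuine Morita) map of character spaces, hence $K({}^PM e_{B^P})$ is a Morita auto-equivalence of $KB^P$. For this I would track what happens at the level of the central idempotents $e_{\bar\chi}$: the image of $M$ in $\Pic(KB)$ permutes the $e_\chi$ for $\chi \in \Irr(B)^P$ (by hypothesis), and applying $p_K$ (i.e. passing to $P$-fixed points) intertwines this with the permutation of the $e_{\bar\chi}$ induced by $K({}^PM)$; since $p_K$ is the algebra map realizing the bijection of (a) on the level of primitive central idempotents, the two permutations agree. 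The main obstacle is being careful that ${}^P(-)$ interacts correctly with the bimodule structure and with tensor products so that it really defines a group homomorphism on the relevant Picard groups (or at least sends our $M$ to an honest auto-equivalence bimodule) — in particular verifying ${}^PM = M^P$ and that $K({}^PM e_{B^P})$ is nonzero on every block summand of $KB^P$, equivalently that no $\Irr(B^P)$-character is killed; the latter follows because the permutation of $\Irr(B)^P$ is a bijection of that whole set. Once these compatibilities are in place, Theorem~\ref{thm:brouechars} finishes part (b), including the asserted matching of permutations.
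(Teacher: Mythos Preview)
Your overall architecture for (b) matches the paper's: establish left/right projectivity of the fixed-point bimodule, compute what happens over $K$, then invoke Theorem~\ref{thm:brouechars}. Part (a) is fine (the paper simply cites \cite[Lemma~8.6]{nt87}).

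The genuine gap is in your justification of ${}^PM = M^P$. Both of your concrete suggestions fail. The claim that $M$ is a direct summand of $\cO G \otimes_{\cO D'} \cO G$ for some $D'$ is precisely what is \emph{not} known for a general $M \in \Pic(B)$; that would force $M$ to have trivial source, and the whole point of the proposition is to treat arbitrary Morita auto-equivalences (so that one can afterwards \emph{deduce} trivial source via Weiss' criterion in Proposition~\ref{Weiss_consequence:prop}). Likewise there is no reason for $M|_{P\times P}$ or $M|_{\Delta P}$ to be a permutation module; freeness on each side separately does not yield ${}^PM = M^P$. Your proposed compatibility $({}^PM)\otimes_{\cO(G/P)}({}^PM')\cong {}^P(M\otimes_{\cO G}M')$ is neither needed nor even well-posed before ${}^PM=M^P$ is established, since a priori ${}^PM$ is only a right $\cO G$-module, not a right $\cO(G/P)$-module.

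The paper obtains ${}^PM=M^P$ character-theoretically, and this is where the hypothesis on $\Irr(B)^P$ is actually used. Since $M$ is left-projective over $\cO G$, its restriction to a Sylow $p$-subgroup is free, so ${}^PM$ is a pure $\cO$-sublattice of $M$ (and similarly for $M^P$); hence it suffices to show $K({}^PM)=K(M^P)$. Writing $KM\cong\bigoplus_{\chi\in\Irr(B)}V_\chi\otimes_K V_{f(\chi)}^*$ and using that, by Clifford theory for the normal subgroup $P$, each $V_\chi|_P$ has either only trivial or only nontrivial constituents, one gets
\[
{}^P(KM)\;\cong\;\bigoplus_{\chi\in\Irr(B)^P}V_\chi\otimes_K V_{f(\chi)}^*.
\]
The hypothesis that $f$ permutes $\Irr(B)^P$ then forces this to coincide with $(KM)^P$, whence ${}^PM=M^P$ by purity. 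This step simultaneously delivers the asserted permutation of $\Irr(B^P)$ and, together with the symmetric argument for $M^P$, the right-projectivity over $\cO(G/P)$; after that Theorem~\ref{thm:brouechars} finishes exactly as you indicate.
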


\begin{proof}$ $
\begin{enumerate}[(a)]
\item This is~\cite[Lemma 8.6]{nt87}.
\item $M$ is projective as a left $\cO G$-module, so $M\downarrow_{S\times\{1\}}$ is a free $\cO S$-module, where $S\in\Syl_p(G)$. So ${}^PM$ is an $\cO$-summand (pure sublattice) of $M$ and ${}^PM$ is a free left $\cO(S/P)$-module. Hence ${}^PM$ is projective as a left $\cO(G/P)$-module. We have analogous statements for $M^P$ as a right $\cO(G/P)$-module. Now
\begin{align*}
KM\cong\bigoplus_{\chi\in\Irr(B)}V_\chi\otimes_KV_{f(\chi)}^*,
\end{align*}
where $V_\chi$ is the simple $KB$-module corresponding to $\chi$ and $f$ is the permutation of $\Irr(B)$ induced by $M$. So
\begin{align}\label{algn:charperm}
K({}^PM)={}^P(KM)\cong\bigoplus_{\chi\in\Irr(B)^P}V_\chi\otimes_KV_{f(\chi)}^*,
\end{align}
where the first equality follows from the fact that ${}^PM$ is an $\cO$-summand of $M$ and the isomorphism from the fact that for each $\chi\in\Irr(G)$, $\chi\downarrow_P$ is a sum of trivial or a sum of non-trivial irreducible characters of $P$. Also, since $f$ permutes $\Irr(B)^P$, we have ${}^P(KM)=(KM)^P$ and so, as ${}^PM$ and $M^P$ are $\cO$-summands of $M$, we have that ${}^PM=M^P$. We have now proved that $M^P$ is an $\cO(G/P)$-$\cO(G/P)$-bimodule, projective on the left and right, inducing a permutation of characters of $\Irr(B)^P$, which we can identify with $\Irr(B^P)$ by part (a). The claim now follows from Theorem~\ref{thm:brouechars}.
\newline
\newline
The claim about the induced permutation of $\Irr(B^P)$ follows from (\ref{algn:charperm}).
\end{enumerate}
\end{proof}

The following proposition is a consequence of Weiss' criterion and, together with Proposition~\ref{prop:indmor}, will be the main tool used in proving Theorem~\ref{thm:PicNorm}.

\begin{prop}
\label{Weiss_consequence:prop}
Suppose further to the hypotheses of Proposition~\ref{prop:indmor}(b) that ${}^PM\in\cT(B^P)$. Then $M\in\cT(B)$.
\end{prop}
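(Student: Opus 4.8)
The plan is to verify the two hypotheses of Weiss' criterion (Proposition~\ref{prop:Weiss}) for the $\cO G$-$\cO G$-bimodule $M$, viewed appropriately as a module over a $p$-group, and then to identify the resulting permutation module as having trivial source. First I would pick $S\in\Syl_p(G\times G)$; since $M$ induces a Morita equivalence of $B$ it is projective as a left and as a right $\cO G$-module, hence $\Res^{G\times G}_{\Delta G}(M)$ is projective as an $\cO(\Delta G)$-module, and in particular $\Res^{\Delta G}_{\Delta P}(M)$ is free over $\cO(\Delta P)$. This supplies the ``free restriction'' hypothesis of Weiss for the $p$-group $\Delta P \lhd \Delta(G)$ (or, more precisely, for a suitable Sylow $p$-subgroup of $G\times G$ containing $\Delta P$ as a normal subgroup; one restricts to such a Sylow subgroup first, noting freeness over it still holds by projectivity of $M$ as a bimodule).

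Next, the ``fixed points are a permutation module'' hypothesis: by Proposition~\ref{prop:indmor}(b) we have ${}^{\Delta P}M = {}^P(M)^P$ (fixing both sides simultaneously makes sense because $M$ permutes $\Irr(B)^P$ by assumption), and this bimodule induces a Morita auto-equivalence of $B^P$. The extra hypothesis now in force is precisely that ${}^PM\in\cT(B^P)$, i.e.\ ${}^PM$ is a trivial source $\cO(G/P)$-$\cO(G/P)$-bimodule, hence a permutation module for a Sylow $p$-subgroup of $(G/P)\times(G/P)$, and so in particular a permutation $\cO\bigl((S/\Delta P)\bigr)$-module for the relevant $p$-group quotient $S/\Delta P$. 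Thus Weiss' criterion applies and tells us that $\Res^{G\times G}_S(M)$ is a permutation $\cO S$-module. Since this holds for a Sylow $p$-subgroup $S$, $M$ is a trivial source (= $p$-permutation) $\cO(G\times G)$-module; being also an invertible bimodule it lies in $\cT(B)$, as required.

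The main obstacle is bookkeeping around the Sylow subgroup: $\Delta P$ is normal in $\Delta G$ but to apply Weiss I need a $p$-group $P'$ with a normal subgroup $Q'$ such that restriction of $M$ to $P'$ is free over $\cO P'$ — freeness over $\cO(\Delta P)$ alone is not quite enough, I want it over a full Sylow $p$-subgroup $S$ of $G\times G$. This is fine: projectivity of $M$ as an $\cO G$-module on each side gives that $\Res^{G\times G}_{P\times 1}(M)$ and $\Res^{G\times G}_{1\times P}(M)$ are free, hence $\Res^{G\times G}_{S}(M)$ is free over $\cO S$ for any $S\in\Syl_p(G\times G)$ (a bimodule projective on both sides is projective, indeed free, as a module over a Sylow of the product). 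I then take $Q'$ to be the image of $\Delta P$ in $S$ (after conjugating $S$ so it contains $\Delta P$, which can be arranged since $\Delta P$ is a $p$-subgroup of $G\times G$), note $Q'\lhd S$ as $\Delta P \lhd \Delta G$ and $\Delta P$ is normalized by $P\times P$, and observe that $S/Q'$ is a Sylow $p$-subgroup of $(G/P)\times(G/P)$ over which ${}^{Q'}M={}^PM^P$ is a permutation module by the hypothesis ${}^PM\in\cT(B^P)$. Everything else is a direct invocation of the cited results, so I would keep the write-up short.
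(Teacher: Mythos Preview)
Your approach via $\Delta P$ fails in two places. First, the freeness claims are false: a bimodule projective on each side need not be free over $\cO(\Delta P)$ or over a Sylow $p$-subgroup of $G\times G$. Take $G=P$ a nontrivial abelian $p$-group and $M=\cO P$ the identity bimodule. Restricted to $\Delta P$ the action is conjugation, hence trivial, so $M$ is a trivial $\cO(\Delta P)$-module and not free; restricted to $P\times P$ it has $\cO$-rank $|P|<|P|^2$, so again not free. Second, ${}^{\Delta P}M\neq{}^PM^P$ in general: the former is $\{m:xm=mx\text{ for all }x\in P\}$ while the latter is $\{m:xm=m=mx\text{ for all }x\in P\}$; in the same example these are $\cO P$ and $\cO\cdot\sum_{g\in P}g$ respectively. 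So the hypothesis ${}^PM\in\cT(B^P)$ gives you no control over ${}^{\Delta P}M$. (There are further downstream problems---$\Delta P$ need not be normal in a Sylow of $G\times G$ unless $P$ is abelian, and $S/\Delta P$ has the wrong order to be a Sylow of $(G/P)\times(G/P)$---but these are moot once the two issues above are seen.)

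The paper's argument takes instead $Q=P\times\{1\}\lhd S\times S$ with $S\in\Syl_p(G)$. Then $\Res_{P\times\{1\}}(M)$ is free simply because $M$ is projective as a left $\cO G$-module, and ${}^{P\times\{1\}}M={}^PM$ is exactly the object the hypothesis controls. Since ${}^PM=M^P$, the right action of $P$ on it is trivial, so its $(S/P)\times S$-structure factors through $(S/P)\times(S/P)$; being a permutation module for the latter by hypothesis, it is one for the former, and Weiss applies.
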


\begin{proof}
We want to show that $M$ is an $\cO(S\times S)$-permutation module, where $S\in\Syl_p(G)$. Due to Proposition~\ref{prop:Weiss} it is enough to show that ${}^PM$ is an $\cO(S/P \times S)$-permutation module. However, this follows immediately from the fact that we already know ${}^PM$ is an $\cO(S/P\times S/P)$-permutation module.
\end{proof}

This allows us to generalize part of~\cite[Proposition 4.3]{bkl18}.

\begin{cor}
\label{decomp_number:cor}
Let $G$ be a finite group and $B$ a block of $\cO G$ with normal defect group $D\lhd G$. If
\begin{align*}
\Irr(B)^D=\{\chi\in\Irr(B)|\chi\text{ is a lift of some }\phi\in\IBr(B)\},
\end{align*}
then $\Pic(B)=\cT(B)$. In particular, if $D$ and $G/D$ are abelian and $Z(G)\cap D=\{1\}$, then $\Pic(B)=\cT(B)$.
\end{cor}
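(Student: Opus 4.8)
The plan is to prove the two assertions of Corollary~\ref{decomp_number:cor} in turn, the first by reducing to Proposition~\ref{Weiss_consequence:prop} and the second by verifying the hypothesis of the first.

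\textbf{The first assertion.} Let $M$ be a $B$-$B$-bimodule inducing a Morita auto-equivalence of $B$; we must show $M \in \cT(B)$. First I would check that $M$ permutes $\Irr(B)^D$, which is exactly the hypothesis needed for Proposition~\ref{prop:indmor}(b) and Proposition~\ref{Weiss_consequence:prop}. Since $M$ induces a Morita equivalence it induces a perfect isometry (Lemma~\ref{lem:morperf}), hence a bijection $\ZZ\Irr(B) \to \ZZ\Irr(B)$ sending irreducible characters to $\pm$ irreducible characters and, via $I_K$, an isomorphism $\ZZ\prj(B) \to \ZZ\prj(B)$ respecting decomposition; equivalently $M$ induces a bijection $\IBr(B) \to \IBr(B)$ compatible with the decomposition matrix. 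The key observation is that the characters $\chi \in \Irr(B)$ that are lifts of a single $\phi \in \IBr(B)$ are precisely those with $\langle d_\chi, d_\chi\rangle = 1$ where $d_\chi$ is the row of the decomposition matrix (i.e. $\chi$ reduces to a single irreducible Brauer character), and this property is preserved by any self-equivalence. So the set on the right-hand side of the displayed hypothesis is $M$-invariant, and by assumption this set equals $\Irr(B)^D$, so $M$ permutes $\Irr(B)^D$. Now Proposition~\ref{prop:indmor}(b) applies: $\,{}^DM = M^D$ induces a Morita auto-equivalence of $B^D$, which is a (sum of) block(s) of $\cO(G/D)$ with trivial defect group, since $D$ is the defect group of $B$ and it is normal. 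A block with trivial defect group is a matrix algebra over $\cO$, so its only Morita auto-equivalence bimodule is (up to isomorphism) the trivial-source bimodule itself; in particular $\,{}^DM \in \cT(B^D)$. Proposition~\ref{Weiss_consequence:prop} then gives $M \in \cT(B)$, so $\Pic(B) = \cT(B)$.

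\textbf{The second assertion.} Suppose now $D$ and $G/D$ are abelian and $Z(G) \cap D = \{1\}$. It suffices to show that the displayed hypothesis of the first assertion holds, i.e. $\Irr(B)^D$ coincides with the set of lifts of irreducible Brauer characters. One inclusion is general: if $\chi \in \Irr(B)$ has $D$ in its kernel then $\chi$ is inflated from $G/D$, and since $D$ is the defect group, $\chi$ lies in a defect-zero block of $\cO(G/D)$, hence reduces mod $p$ to a single irreducible Brauer character of $G/D$, which inflates to one of $G$; thus $\chi$ is a lift of some $\phi \in \IBr(B)$. For the reverse inclusion I would count. Because $D \lhd G$ is abelian, $B$ has a description via Clifford theory over $\cO D$; the irreducible Brauer characters of $B$ are in bijection with the $E$-orbits on $\Irr(D)$ of characters lying under $B$ (here $E$ is the inertial quotient), and since $D$ is abelian and in a normal position the relevant block of $\cO D$ is a single linear character, so in fact $|\IBr(B)|$ equals the number of irreducible characters of the inertial quotient (a $p'$-group), which equals the number of lifts of Brauer characters. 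On the other hand $|\Irr(B)^D| = |\Irr(B^D)|$ by Proposition~\ref{prop:indmor}(a), and $B^D$ is a sum of defect-zero blocks of $\cO(G/D)$; I would show this number is also equal to $|\IBr(B)|$. The cleanest route is to observe that both sets biject with $\Irr(E)$: the lifts of Brauer characters correspond to the trivial $D$-character line, on which $E$ acts, giving $|\Irr(E)|$ of them (this is where $Z(G) \cap D = \{1\}$ is used, to ensure $C_G(D)/D$ contributes nothing extra and the stabilizer computation is clean), and similarly the defect-zero characters of $G/D$ lying in $B^D$ are exactly those of $G/D$ inflated appropriately and counted by $\Irr(E)$. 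Matching the two counts with the already-established inclusion forces equality of the sets.

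\textbf{Main obstacle.} The genuinely delicate point is the second assertion's counting step: carefully identifying, via Clifford theory with the normal abelian defect group $D$, both $|\Irr(B)^D|$ and the number of lifts of irreducible Brauer characters with $|\Irr(E)|$, and pinning down exactly where the hypothesis $Z(G) \cap D = \{1\}$ enters (it rules out $D$ meeting the centre, so that no nontrivial character of $D$ is $G$-stable in a way that would create extra lifted characters, and it makes $C_G(D) = D \times (\text{something})$-type splitting manageable). The first assertion, by contrast, is a fairly direct assembly of Proposition~\ref{prop:indmor}, the defect-zero triviality of $B^D$, and Proposition~\ref{Weiss_consequence:prop}, once one notes that "lift of a single Brauer character" is an invariant of Morita self-equivalence.
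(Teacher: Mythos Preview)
Your argument for the first assertion is essentially the paper's: the set of lifts of irreducible Brauer characters is Morita-invariant, so $M$ permutes $\Irr(B)^D$; then $B^D$ is a sum of defect-zero blocks, whence ${}^DM\in\cT(B^D)$ automatically, and Proposition~\ref{Weiss_consequence:prop} finishes.

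For the second assertion the paper takes a different and much cleaner route than your counting sketch. It first observes that $p\nmid |G/D|$ (otherwise $D$ would be properly contained in a normal $p$-subgroup), so $G/D$ is an abelian $p'$-group and every $\phi\in\IBr(B)$ is linear. Hence any lift $\chi$ of some $\phi$ satisfies $\chi(1)=1$. Conversely, the hypothesis $Z(G)\cap D=\{1\}$ says $C_D(G)=1$; by the standard coprime-action decomposition $D=C_D(H)\times[D,H]$ for a $p'$-group $H$ (applied to $H=G/D$ acting on $D$, and then dually on $\Irr(D)$), this forces every nontrivial character of $D$ to have nontrivial $G$-orbit. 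Thus any linear $\chi\in\Irr(G)$ must restrict trivially to $D$, i.e.\ $\chi\in\Irr(B)^D$. So both sets in the displayed hypothesis coincide with $\{\chi\in\Irr(B):\chi(1)=1\}$, and you are done.

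Your proposed counting via $|\Irr(E)|$ is where the proposal is genuinely incomplete. You have not said what $E$ is in this generality (with $D$ abelian and normal, $E=\Stab_G(b_D)/C_G(D)$, and $C_G(D)$ need not equal $D$), nor justified why either side bijects with $\Irr(E)$; with a nontrivial $C_G(D)/D$ this is not automatic, and your remark that $Z(G)\cap D=\{1\}$ ``makes $C_G(D)=D\times(\text{something})$-type splitting manageable'' is exactly the point that needs proof. The paper bypasses all of this by never mentioning $E$ and instead pinning both sets down as the linear characters of $B$.
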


\begin{proof}
We first note that $|\Irr(B^D)|=|\IBr(B^D)|$ and so $B^D$ is a sum of defect zero blocks and so we trivially have that any $\Pic(B^D)=\cT(B^D)$. Now since
\begin{align*}
\{\chi\in\Irr(B)|\chi\text{ is a lift of some }\phi\in\IBr(B)\},
\end{align*}
is left invariant by any Morita auto-equivalence of $B$, we can apply Propositions~\ref{prop:indmor} and~\ref{Weiss_consequence:prop} and the first statement follows.
\newline
\newline
For the second statement let $P$ be an abelian $p$-group and $H$ a $p'$-subgroup of $\Aut(P)$. By~\cite[$\S5$ Theorem 2.3]{gor80} $C_P(H)$ has an $H$-invariant complement in $P$. Therefore if $C_P(H)$ is non-trivial then there exists a non-trivial, $H$-invariant, irreducible character of $P$. Note that $H$ naturally acts on $\Irr(P)$ and we can identify the action of $H$ on $\Irr(\Irr(P))$ with that of $H$ on $P$. Therefore we have the reverse implication as well, namely if there exists a non-trivial, $H$-invariant, irreducible character of $P$ then $C_P(H)$ is non-trivial.
\newline
\newline
Now we assume $D$ and $G/D$ are abelian and $Z(G)\cap D=\{1\}$. We first note that $p\nmid|G/D|$ as otherwise $D$ is properly contained in a normal $p$-subgroup of $G$, a contradiction. So, by the previous paragraph, no non-trivial character of $D$ is $G$-stable. Therefore, any $\chi\in\Irr(G)$ lying above any non-trivial $\lambda\in\Irr(D)$ has degree greater than $1$. So
\[\begin{array}{lll}
\Irr(B)^D&=&\{\chi\in\Irr(B)|\chi(1)=1\}\\
&=&\{\chi\in\Irr(B)|\chi\text{ is a lift of some }\phi\in\IBr(B)\}\\
\end{array}\]
and the claim follows from the first part of the corollary.
\end{proof}

Our first consequence is the boundedness of Picard groups for general blocks with normal defect groups.

\begin{prop}
\label{finiteness:prop}
Let $G$ be a finite group and $B$ a block of $\cO G$ with normal defect group $D$, then $\Picent(B)\leq\cT(B)$. In particular $|\Pic(B)|\leq|D|^2!|\cT(B)|$.
\end{prop}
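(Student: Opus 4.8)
The plan is to prove the two assertions of Proposition~\ref{finiteness:prop} separately, using the machinery already established. For the first claim, $\Picent(B)\leq\cT(B)$, the key observation is that any $M\in\Picent(B)$ fixes every irreducible character of $B$ (this is noted explicitly in Section~\ref{Picard_background}). In particular $M$ fixes every character in $\Irr(B)^D$, so the hypotheses of Proposition~\ref{prop:indmor}(b) are met and ${}^PM={}^DM=M^D$ induces a Morita auto-equivalence of $B^D$. Moreover the permutation of $\Irr(B^D)$ induced by ${}^DM$ is, after the identification of part (a), the restriction to $\Irr(B)^D$ of the permutation $M$ induces on $\Irr(B)$, which is the identity. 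Thus ${}^DM$ lies in $\Picent(B^D)$. But $B^D$ is a sum of blocks of $\cO(G/D)$ which, since $D$ is the defect group and $D\lhd G$, have defect zero, so each is a matrix algebra over $\cO$; hence $\Pic(B^D)=\cT(B^D)$ and in particular ${}^DM\in\cT(B^D)$. Now Proposition~\ref{Weiss_consequence:prop} applies directly and gives $M\in\cT(B)$, proving $\Picent(B)\leq\cT(B)$.

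For the second claim I would combine this with a counting argument. Every element of $\Pic(B)$ induces an automorphism of $Z(B)$, and by Lemma~\ref{lem:isomcent} (or directly, since such an automorphism is determined by its effect on the character idempotents) this gives a homomorphism $\Pic(B)\to\Sigma$, where $\Sigma$ is the symmetric group on $\Irr(B)$; the kernel is precisely $\Picent(B)$. Since $B$ has defect group $D$ with $D\lhd G$, the number of irreducible characters $|\Irr(B)|$ is at most $|D|^2$ — for instance $B$ is (source-algebra, hence Morita) equivalent to a block with normal defect group by Külshammer's theorem, and such a block has $k(B)\leq k(D\rtimes E)\leq |D|$, but even crudely $|\Irr(B)|\le |D|^2$ suffices and can be quoted. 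Hence the image of $\Pic(B)$ in $\Sigma$ has order at most $|D|^2!$, giving $|\Pic(B)|\leq |D|^2!\cdot|\Picent(B)|\leq|D|^2!\cdot|\cT(B)|$ by the first part.

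The main obstacle, such as it is, is marshalling the correct bound on $|\Irr(B)|$ and making sure the identification of $\Picent(B)$ as the kernel of the map to the symmetric group on $\Irr(B)$ is stated cleanly; the paper already records that $\Picent(B)$ is the subgroup fixing every irreducible character, so this is essentially immediate. The genuinely substantive input — that ${}^DM$ descends to a trivial-source equivalence of $B^D$ and lifts back via Weiss' criterion — is entirely supplied by Propositions~\ref{prop:indmor} and~\ref{Weiss_consequence:prop}, so the proof is short. One should be slightly careful that $B^D$ really is a sum of defect-zero blocks: this is because a block of $\cO(G/D)$ dominated by $B$ has defect group contained in $DP_0/D$ for $P_0$ a defect group of the dominated block in $G$, and since $D$ is already a full defect group of $B$ and is normal, the dominated blocks have trivial defect — alternatively this is exactly the observation made at the start of the proof of Corollary~\ref{decomp_number:cor}, which one can simply cite.
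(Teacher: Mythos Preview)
Your proof is correct and follows essentially the same route as the paper: apply Proposition~\ref{prop:indmor}(b) and Proposition~\ref{Weiss_consequence:prop} with $P=D$ to show $\Picent(B)\leq\cT(B)$, then bound the index of $\Picent(B)$ in $\Pic(B)$ by the order of the symmetric group on $\Irr(B)$. The only cosmetic difference is that the paper obtains $|\Irr(B)|\leq|D|^2$ by a direct citation of the Brauer--Feit theorem~\cite{bf59}, whereas your justification (invoking K\"ulshammer and the $k(B)$-bound for normal-defect blocks) is a little roundabout given that $D$ is already assumed normal; simply quoting Brauer--Feit is cleaner.
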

\begin{proof}
As in Corollary~\ref{decomp_number:cor}, we have $|\Irr(B^D)|=|\IBr(B^D)|$ and so $\Pic(B^D)=\cT(B^D)$ is a direct sum of trivial source modules. Recall that $\Picent(B)$ is precisely the subgroup of bimodules in $\Pic(B)$ that fix every irreducible character. Therefore any $M\in\Picent(B)$ certainly satisfies the conditions of Proposition~\ref{prop:indmor}(b) with respect to $D$ and so the first part follows from Proposition~\ref{Weiss_consequence:prop}. By the Brauer-Feit theorem~\cite{bf59}, $\Picent(B)$ is a subgroup of $\Pic(B)$ of index at most $|D|^2!$ and the second part follows.
\end{proof}

Applying the methods of this section, we have for example:

\begin{thm}\label{thm:PicNorm}
Let $p=2$, $P$ be a finite abelian $2$-group and $n,n_1,n_2\in\mathbb{N}$.
\begin{enumerate}[(a)]
\item $\Pic(\cO (P \times G_n))=\cL(\cO (P \times G_n)) \cong (P \rtimes \Aut(P)) \times(C_3\rtimes \Out(G_n))$.
\item $\Pic(B_0(\cO (P \times A_5)))= \cL(B_0(\cO (P \times A_5))) \cong (P \rtimes \Aut(P)) \times C_2$.
\item $\Pic(\cO (G_{n_1}\times G_{n_2}))=\cT(\cO (G_{n_1}\times G_{n_2}))\cong$
\begin{enumerate}[(i)]
\item $(C_3\rtimes\Out(G_{n_1}))\wr S_2$ if $n_1=n_2$,
\item $(C_3\rtimes\Out(G_{n_1}))\times(C_3\rtimes\Out(G_{n_2}))$, if $n_1\neq n_1$;
\end{enumerate}
\item $\Pic(B_0(\cO (G_n\times A_5)))=\cT(B_0(\cO (G_n\times A_5))) \cong (C_3\rtimes\Out(G_n)) \times C_2$.
\item $\Pic(\cO((C_{2^n})^3\rtimes C_7))=\cT(\cO((C_{2^n})^3\rtimes C_7))\cong C_7\rtimes\Out((C_{2^n})^3\rtimes C_7)$.
\item $\Pic(\cO((C_{2^n})^3\rtimes(C_7\rtimes C_3)))=\cT(\cO((C_{2^n})^3\rtimes(C_7\rtimes C_3)))\cong C_3 \rtimes \Out((C_{2^n})^3\rtimes (C_7\rtimes C_3))$.
\end{enumerate}

For each block $B$ above $\Picent(B)=1$.
\end{thm}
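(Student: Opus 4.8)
# Proof Proposal for Theorem~\ref{thm:PicNorm}

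The plan is to treat the six cases uniformly using the machinery assembled in Sections~\ref{Picard_background}--\ref{normal_defect}, which reduces each computation to (1) identifying $\cT(B)$ via the exact sequences in~(\ref{arr:exact}) and the factorisation of source algebras in Lemma~\ref{factorise_source_algebra:lemma}, and (2) using the perfect-isometry restrictions of Section~\ref{perfect_isometries} to show $\Pic(B)=\cT(B)$ or $\Pic(B)=\cL(B)$. First I would establish that in every case the defect group $D$ is abelian and normal (for the groups $P\times G_n$, $G_{n_1}\times G_{n_2}$, and the $(C_{2^n})^3\rtimes(\dots)$ cases this is immediate; for the $A_5$-factor cases one notes the defect group of $B_0(\cO A_5)$ is the Klein four group, which is self-centralizing in $A_5$), so that Corollary~\ref{decomp_number:cor} and Proposition~\ref{finiteness:prop} apply. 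For the cases with no $A_5$ factor, $Z(G)\cap D=\{1\}$ (here $C_3$, $C_7$, or $C_7\rtimes C_3$ acts fixed-point-freely on $D$, and for $P\times G_n$ the $P$-part contributes linear source), and Corollary~\ref{decomp_number:cor} gives $\Pic=\cT$ directly; for the $A_5$ cases one instead argues that the relevant character set is preserved, invoking Propositions~\ref{prop:GA5} and~\ref{prop:indmor}.

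Next I would compute the abstract isomorphism type. Using Lemma~\ref{factorise_source_algebra:lemma}, $\Out_D(A)$ factorises over the direct-product decomposition; combined with the fact (cited from~\cite{bkl18}) that $\Out_D(A)\hookrightarrow\Hom(E,k^\times)$ and the structure of the inertial quotients ($E=C_3$ for $G_n$, trivial for the $P$-part since $P$ is its own defect group with trivial fusion, $C_2$-type data for $A_5$, $C_7$ or $C_7\rtimes C_3$ for the last two), I would pin down $\Out_D(A)$ in each case. Then Lemma~\ref{Out_D(A)subgroup:lem} supplies a lower bound: taking $N$ to be an appropriate normal subgroup with abelian quotient realises the ``diagonal'' linear-character contributions (this is what produces the $P\rtimes\Aut(P)$ and the $C_3$, $C_7$, $C_3$ factors). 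Matching the upper bound from~(\ref{arr:exact})—where the image in $\Out(D,\cF)$ is controlled by Lemma~\ref{lem:OutOut}, giving $\Out(G_n)$, $\Out((C_{2^n})^3\rtimes C_7)$, etc.—with this lower bound yields the stated group, with the wreath product $\wr S_2$ in case (c)(i) coming from the extra symmetry swapping the two isomorphic factors $G_{n_1}\cong G_{n_2}$, realised by the obvious bimodule swapping tensor factors.

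The final assertion $\Picent(B)=1$ is where the perfect-isometry results of Section~\ref{perfect_isometries} do the real work. Any $M\in\Picent(B)$ fixes every irreducible character, hence induces the trivial perfect self-isometry; in particular it fixes every $\chi_i\otimes(\text{stuff})$ and every $\theta\otimes\chi_i$. For the $G_n$ and $P\times G_n$ cases, Proposition~\ref{prop:PG} (resp.\ Theorem~\ref{thm:PA4}) forces the permutation of $\Irr(P)$ to be trivial and $\sigma=\mathrm{id}$, so $M$ acts trivially on characters of $\cO P$ and on the three exceptional characters of $G_n$; one then checks that such an $M$ lies in the kernel of~(\ref{arr:exact}), i.e.\ in $\Out_D(A)\leq\Hom(E,k^\times)$, and since it fixes all characters it must also be trivial there (a nontrivial element of $\Hom(C_3,k^\times)$ would permute the exceptional characters of $G_n$ nontrivially, contradiction; for $P$, Lemma~\ref{lem:perfP}(b) shows the induced automorphism of $Z(\cO P)$ is trivial, forcing triviality). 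For the $A_5$ cases one uses Proposition~\ref{prop:GA5} together with the same analysis of $\Out_D(A)$; for the $C_7$ and $C_7\rtimes C_3$ cases, $\Out_D(A)\leq\Hom(C_7,k^\times)$ (resp.\ $\Hom(C_7\rtimes C_3,k^\times)=\Hom(C_3,k^\times)$), and a nontrivial element there would move characters lying over nontrivial characters of $D$, again a contradiction. I expect the main obstacle to be the bookkeeping in this last step: one must verify carefully, case by case, that fixing all irreducible characters really does force triviality at the $\Out_D(A)$ level, since a priori $\Out_D(A)$ could contain bimodules that permute characters only ``within fibres'' in a way invisible to the coarse invariants—ruling this out is exactly what the fine control from Proposition~\ref{prop:PG}, Theorem~\ref{thm:PA4}, and the explicit decomposition-matrix computations in Proposition~\ref{prop:GA5} is designed to provide.
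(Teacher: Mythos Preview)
Your proposal has two genuine gaps that would prevent the argument from going through as written.

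\textbf{First, the defect group is not normal in cases (b) and (d).} You assert that in every case $D$ is abelian and normal, but for $P\times A_5$ and $G_n\times A_5$ the Sylow $2$-subgroup of $A_5$ is not normal in $A_5$ (which is simple), so $D$ is not normal in $G$. Consequently Corollary~\ref{decomp_number:cor} and Proposition~\ref{finiteness:prop} do not apply to these blocks. The paper instead takes the fixed points with respect to a \emph{proper} normal $2$-subgroup: in (b) one uses $P\lhd P\times A_5$, so that $B^P=B_0(\cO A_5)$, and then invokes the known result $\Pic(B_0(\cO A_5))=\cT(B_0(\cO A_5))$ from~\cite{bkl18} to feed into Proposition~\ref{Weiss_consequence:prop}; in (d) one uses $S=O_2(G_n)$, so that $B^S\cong \cO C_3\otimes_\cO B_0(\cO A_5)$, and argues directly that $\Pic(B^S)=\cT(B^S)$. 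The perfect-isometry results (Theorem~\ref{thm:PA4}, Proposition~\ref{prop:GA5}) are what guarantee that the relevant character set $\Irr(B)^P$ or $\Irr(B)^S$ is preserved, so that Proposition~\ref{prop:indmor}(b) applies---but this is preservation under passage to a quotient by a subgroup strictly smaller than $D$.

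\textbf{Second, case (a) cannot be handled by Corollary~\ref{decomp_number:cor}.} You claim $Z(G)\cap D=\{1\}$ for the non-$A_5$ cases, but for $G=P\times G_n$ one has $Z(G)=P$ and $D=P\times O_2(G_n)$, so $Z(G)\cap D=P$, which is nontrivial whenever $P\neq 1$. Nor does the first hypothesis of Corollary~\ref{decomp_number:cor} hold: every $\theta\otimes\chi_i$ with $\theta\in\Irr(P)$ and $1\leq i\leq 3$ is a lift of an irreducible Brauer character, so the set of lifts has size $3|P|$, whereas $\Irr(B)^D$ has size $3$. The paper's argument for (a) is instead: use Proposition~\ref{prop:PG} to show that after composing $M$ with an element of $\Aut(\cO P)\subset\cL(B)$, the resulting bimodule permutes $\Irr(B)^D$; then Proposition~\ref{Weiss_consequence:prop} gives trivial source. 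This shows $\Pic(B)$ is generated by $\Aut(\cO P)$ and $\cT(B)$, hence $\Pic(B)=\cL(B)$---which is strictly larger than $\cT(B)$ when $P\neq 1$.

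Finally, your treatment of $\Picent(B)=1$ is more involved than necessary. Once $\Pic(B)$ has been explicitly computed and every generator identified with a concrete bimodule (tensoring by a linear character, a group automorphism, or an element of $\Aut(\cO P)$), one simply observes that each nontrivial element visibly moves some irreducible character; no separate perfect-isometry analysis is needed.
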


\begin{proof}
Write $D$ for a defect group of the block $B$ of the group $G$ under consideration and $A$ for a source algebra. Let $E$ be the inertial quotient of the block and $\cF$ the fusion system associated to $B$. In the arguments that follow we will make free use of the results of~\cite{bkl18} as presented in Section \ref{Picard_background}, in particular the exact sequences in (\ref{arr:exact}). We also make repeated use of Lemma~\ref{lem:morperf} without further reference to it. When referring to $G_n$ it is assumed that the generators of $C_{2^n}\times C_{2^n}$ are $a$ and $b$ and that $C_3$ permutes $a$, $b$ and $(ab)^{-1}$ cyclically.

Note that the principal $2$-blocks of $G_n$, $A_5$ and of any $2$-group are equal to their source algebras, and so by Lemma \ref{factorise_source_algebra:lemma} the same is true for all blocks in (a)-(d).

\begin{enumerate}[(a)]
\item Let $M\in\Pic(B)$. Recall from $\S$\ref{perfect_isometries} that we are writing $\chi_1, \chi_2, \chi_3$ for the linear characters of $G_n$. By Proposition~\ref{prop:PG} and Lemma~\ref{lem:perfP}(b) we may compose $M$ with a Morita auto-equivalence induced by some element of $\Aut(\cO P)$ such that the induced permutation $I$ of $\Irr(P\times G_n)$ satisfies $I(\theta\otimes\chi_i)=\theta\otimes\chi_{\sigma(i)}$, where $\sigma\in S_3$, for all $\theta\in\Irr(P)$ and $1\leq i\leq 3$.
\newline
\newline
We now assume that $M$ does indeed induce the above permutation of characters, in particular $M$ satisfies the conditions of Proposition~\ref{prop:indmor}(b) with respect to $D$. Therefore, ${}^DM$ induces a Morita auto-equivalence of $\cO C_3$ and so certainly has trivial source. Proposition~\ref{Weiss_consequence:prop} now implies that $M$ must also have trivial source. We have shown that $\Pic(B)$ is generated by $\Aut(\cO P)$ and $\cT(B)$. In particular, Lemma~\ref{lem:perfP}(a) implies $\Pic(B)=\cL(B)$.
\newline
\newline
We now calculate $\cT(B)$. By the description proceeding Lemma~\ref{factorise_source_algebra:lemma} and considering the direct summands of $B\otimes_{\cO D}B$, we have that $\Out_D(A)\cong C_3$ is precisely the set of Morita auto-equivalences given by tensoring with the linear characters of $B$.
\newline
\newline
A direct calculation gives that $\Aut(D,\cF) \cong \Aut(P) \times N_{\Aut(O_2(G_n))}(C_3)$. Therefore, by Lemma~\ref{lem:OutOut}, $\Out(D,\cF) \cong \Aut(P)\times\Out(G_n)$ and so we can realise $\Out(D,\cF)$ as a quotient of $\cT(B)$. So $\cT(\cO (P \times G_n)) \cong \Aut(P)\times(C_3\rtimes\Out(G_n))$. We finally note that $\Aut(\cO P)\cap\cT(B)=\Aut(P)$ and the result follows.

\item Let $M\in\Pic(B)$. We first note that by~\cite[A1.3]{br1990}, $\cO A_4$ and $B_0(\cO A_5)$ are perfectly isometric. We can then apply Theorem~\ref{thm:PA4} and compose $M$ with an appropriate element of $\Aut(\cO P)$ to produce an element of $\Pic(B)$ that permutes $\Irr(B)^P$.
\newline
\newline
Suppose now that $M$ does permute $\Irr(B)^P$. By Proposition \ref{prop:indmor}(b) ${}^PM$ induces a Morita auto-equivalence of $B^P=B_0(\cO A_5)$. By~\cite[Theorem 1.5]{bkl18} $\Pic(B_0(\cO A_5))= \cT(B_0(\cO A_5)) \cong C_2$ and so by Proposition~\ref{Weiss_consequence:prop} $M$ must have trivial source. We have shown that $\Pic(B)$ is generated by $\Aut(\cO P)$ and $\cT(B)$. In particular, Lemma~\ref{lem:perfP}(a) implies $\Pic(B)=\cL(B)$.
\newline
\newline
We now calculate $\cT(B)$. By the proof of~\cite[Theorem 1.5]{bkl18}, or by observing that the nontrivial element of $\Pic(B_0(\cO A_5))$ comes from the conjugation action of $S_5$, we have $\Out_{C_2 \times C_2}(B_0(\cO A_5))=1$. Since $\Out_P(\cO P)$ is also trivial, by Lemma \ref{factorise_source_algebra:lemma} $\Out_D(A)=1$. It follows that $\cT(B) \leq \Out(D,\cF)\cong \Aut(P)\times C_2$. Note that we can realise all elements of $\Out(D,\cF)$ in $\cT(B)$. Indeed, $\Aut(P)$ is realised in the obvious way and the non-trivial element of $C_2$ is realised by conjugating by some $s\in S_5\backslash A_5$. We finally note that $\Aut(\cO P)\cap\cT(B)=\Aut(P)$ and the result follows.

\item That $\Pic(B)=\cT(B)$ follows immediately from the second part of Corollary~\ref{decomp_number:cor}. We now calculate $\cT(B)$. Note, by performing a similar calculation to that in part (a), $\Out_D(A)\cong C_3 \times C_3$ is precisely the set of Morita auto-equivalences given by tensoring with the linear characters of $B$. Now, by Lemma~\ref{lem:OutOut},
\begin{align*}
\Out(D,\cF)\cong\Out(G)\cong
\begin{cases}
\Out(G_{n_1})\wr S_2&\text{if }n_1=n_2,\\
\Out(G_{n_1})\times\Out(G_{n_2})&\text{if }n_1\neq n_2.
\end{cases}
\end{align*}
Note that $\Out(D,\cF)$ is realised as a quotient of $\cT(B)$ and the result follows.
\item Let $M \in \Pic(B)$ and let $S\in\Syl_p(G_n)$. By Proposition~\ref{prop:GA5} and Proposition \ref{prop:indmor}(b), ${}^SM$ induces a Morita auto-equivalence of $B^S \cong \cO C_3\otimes_\cO B_0(\cO A_5)$. We now claim that $\Pic(B^S)=\cT(B^S)$. We have
\begin{align*}
B^S=\bigoplus_{\lambda\in\Irr(C_3)}e_\lambda B^S
\end{align*}
and each $e_\lambda B^S\cong B_0(\cO A_5)$. Let $N\in\Pic(B^S)$. Certainly $\Pic(\cO C_3)=\cT(\cO C_3)$ acts as the full permutation group on $\Irr(C_3)$. So we may assume
\begin{align*}
N=\bigoplus_{\lambda\in\Irr(C_3)}N_\lambda\in\Pic(B^S),
\end{align*}
where each $N_\lambda\in\Pic(e_\lambda B^S)$. Now, as in (b), each $N_\lambda$ has trivial source. Therefore ${}^SM\in\Pic(B^S)=\cT(B^S)$ and hence, by Theorem~\ref{Weiss_consequence:prop}, $M\in\cT(B)$.
\newline
\newline
We now calculate $\cT(B)$. Once again, by Lemma \ref{factorise_source_algebra:lemma} we have that $\Out_D(A) \cong C_3$, which must then be the set of Morita auto-equivalences given by tensoring with the linear characters of $\cO G_n$. Now, a direct calculation when $n=1$ or by noting that $\Aut(D,\cF)$ must respect the decomposition $D=(D\cap G_n)\times (D\cap A_5)$ and applying Lemma~\ref{lem:OutOut} when $n>1$,
\begin{align}\label{algn:Out}
\Out(D,\cF)\cong
\begin{cases}
C_2 \wr C_2 \cong D_8 &\text{if }n=1,\\
\Out(G_n)\times C_2&\text{if }n>1.
\end{cases}
\end{align}
If $n>1$ then, once again, we can realise $\Out(D,\cF)$ as a quotient of $\cT(B)$. The result follows for $n>1$. Now suppose $n=1$. We have $S_3 \times C_2 \leq \cT(B)$, where $\Out_D(A) \cong C_3$ consists of elements acting as multiplication by a linear character of $A_4$ and the remaining elements come from conjugation by elements of $S_4 \times S_5$. Now either $S_3 \times C_2\cong\cT(B)$ or $S_3 \times C_2$ is isomorphic to an index $2$ subgroup $N$ of $\cT(B)$. In the latter case the isomorphism in (\ref{algn:Out}) gives that the element $\alpha$ of $\Pic(B)$ induced by conjugation by some $s\in S_5\backslash A_5$ is not central in $\Pic(B)$. However, $Z(N)=\langle \alpha\rangle$ and since $N$ has index $2$ in $\Pic(B)$, $\langle \alpha\rangle\leq Z(\Pic(B))$, a contradiction. Therefore we do indeed have that $\cT(B)\cong S_3\times C_2$.

\item That $\Pic(B)=\cT(B)$ follows immediately from Corollary~\ref{decomp_number:cor}. By the description preceding Lemma~\ref{factorise_source_algebra:lemma}, $\Out_D(A)\cong C_7$. Lemma~\ref{lem:OutOut} gives $\Out(D,\cF)\cong\Out(G)$ and the result follows from (\ref{arr:exact}), noting that as $\Out(D,\cF)$ is realised by a subgroup of $Pic(B)$ (given by group automorphisms) we do have a semidirect product. 

\item Again $\Pic(B)=\cT(B)$ follows immediately from Corollary~\ref{decomp_number:cor}. To calculate $\Out_D(A)$, consider
\begin{align*}
H:=C_{2^n}^3\rtimes C_7\leq C_{2^n}^3\rtimes(C_7\rtimes C_3).
\end{align*}
Then
\begin{align*}
\cO H\otimes_{\cO D}\cO H\cong\bigoplus_{\lambda\in\Irr(C_7)}M_\lambda,
\end{align*}
where $M_\lambda\in\Pic(\cO H)$ is given by tensoring with $\lambda$. Note that
\begin{align*}
\Stab_{G\times G}(M_\lambda)\cong
\begin{cases}
(\Delta G).(H\times H)&\text{if }\lambda=1,\\
(H\times H)&\text{if }\lambda\neq 1.
\end{cases}
\end{align*}
Therefore the only summands of $\cO G\otimes_{\cO D}\cO G$ that might possibly induce a Morita auto-equivalence on $\cO G$ are the direct summands of
\begin{align*}
M_1\uparrow^{G\times G}\cong \bigoplus_{\mu\in\Irr(C_3)}N_\mu,
\end{align*}
where $N_\mu\in\Pic(\cO G)$ is given by tensoring with $\mu$. In particular $\Out_D(A)\cong C_3$. Lemma~\ref{lem:OutOut} gives $\Out(D,\cF)\cong \Out(G)$ and so the result follows from (\ref{arr:exact}), noting as above that we do have a semidirect product since $\Out(D,\cF)$ is realised by a subgroup of $Pic(B)$. 
\end{enumerate}

The final remark concerning $\Picent(B)$ follows since in each case every element of $\Pic(B)$ induces a nontrivial permutation of $\Irr(B)$.
\end{proof}

\section{Blocks with abelian defect groups of $2$-rank at most three}
\label{rank3}

In~\cite{el2018}, which uses also results of~\cite{wzz17}, the Morita equivalence classes of $2$-blocks with abelian defect groups of $2$-rank at most three were classified. In this section we complete the computation of the Picard groups of a representative from each Morita equivalence class, so that the isomorphism type of the Picard group is known for every such block.

\begin{prop}[\cite{el2018},\cite{wzz17}]
Let $G$ be a finite group and $B$ a block of $\mathcal{O}G$ with defect group $D$ of $2$-rank at most $3$. Then $B$ is Morita equivalent to the principal block of one of:
(i) $\cO D$;
(ii) $\cO (D \rtimes C_3)$;
(iii) $\cO (C_{2^n}\times A_5)$ for $n \geq 0$;
(iv) $\cO (D \rtimes C_7)$;
(v) $\cO SL_2(8)$;
(vi) $\cO (D \rtimes (C_7 \rtimes C_3))$;
(vii) $\cO J_1$;
(viii) $\cO \Aut(SL_2(8))$.
\end{prop}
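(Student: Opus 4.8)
The proposition is a reformulation of the main classification results of \cite{el2018} and \cite{wzz17}; the ``proof'' consists of collecting those results, so the plan is to indicate the shape of the underlying argument. The first step is to reduce to the local data. Since $D$ is abelian of $2$-rank at most three it is a direct product of at most three cyclic $2$-groups, and the inertial quotient $E$ is a $2'$-subgroup of $\Aut(D)$. Enumerating such subgroups — for homogeneous $D$ of rank three this is governed by the $2'$-subgroups of $GL_3(2)$, namely $1$, $C_3$, $C_7$ and $C_7\rtimes C_3$, together with their images in $\Aut(D)$ for inhomogeneous or lower-rank $D$ — one obtains a finite list of admissible pairs $(D,E)$. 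Because $D$ is abelian, every saturated fusion system on $D$ with inertial quotient $E$ equals $\cF_D(D\rtimes E)$, so the fusion system is determined by the action of $E$, and no exotic fusion systems occur in this range.

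The second step is to determine, for each admissible $(D,\cF)$, the Morita equivalence class of a block $B$ realising it. If $E=1$ then $B$ is nilpotent, hence Morita equivalent to $\cO D$ by Puig's theorem on nilpotent blocks, giving (i). For the remaining pairs the dichotomy is between \emph{inertial} blocks, Morita equivalent to the Brauer correspondent $\cO(D\rtimes E)$ — which produces (ii) for $E=C_3$, (iv) for $E=C_7$ and (vi) for $E=C_7\rtimes C_3$ — and a short list of sporadic blocks. The sporadic blocks arising with $2$-rank at most three are the principal block of $A_5$, and hence of $C_{2^n}\times A_5$ for $n\geq 0$, giving (iii); the principal block of $SL_2(8)$ (with $D=(C_2)^3$, $E=C_7$), giving (v); the principal block of $\Aut(SL_2(8))$ (with $D=(C_2)^3$, $E=C_7\rtimes C_3$), giving (viii); and the principal block of $J_1$ (again $D=(C_2)^3$, $E=C_7\rtimes C_3$), giving (vii). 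Note that (vi), (vii) and (viii) all realise the same fusion system but lie in three distinct Morita equivalence classes.

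The main obstacle — and the technical content of \cite{el2018} and \cite{wzz17} — is precisely deciding, for each admissible $(D,\cF)$, whether $B$ is inertial or one of the sporadic blocks, and showing that the list of sporadic blocks is complete. This is carried out by a reduction, using Fong--Reynolds and the extended first main theorem, to blocks of quasisimple groups, followed by a case analysis based on the classification of finite simple groups, together with the known verifications of Brou\'e's abelian defect group conjecture for the principal blocks of $SL_2(2^n)$; no elementary argument is available, and this is where essentially all the difficulty lies.
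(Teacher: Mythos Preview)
The paper does not give its own proof of this proposition: it is stated purely as a citation of \cite{el2018} and \cite{wzz17}, with no accompanying argument. Your proposal therefore goes beyond what the paper itself offers, supplying an outline of the strategy underlying those references.

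That outline is broadly accurate. The three-step shape --- enumerate the admissible pairs $(D,E)$, split into nilpotent/inertial versus sporadic, and then reduce the sporadic analysis to quasisimple groups via the classification --- is indeed how \cite{el2018} and \cite{wzz17} proceed, and your identification of which fusion systems give rise to which cases is correct (in particular that (vi), (vii), (viii) share the fusion system on $(C_2)^3$ with inertial quotient $C_7\rtimes C_3$). Two small remarks: first, the reduction step in \cite{el2018} is somewhat more elaborate than a single Fong--Reynolds/first main theorem step --- it passes through central products and uses perfect isometry arguments to control the passage from quasisimple components to the original block, so ``Fong--Reynolds and the extended first main theorem'' undersells what is actually needed; second, the appeal to Brou\'e's conjecture for $SL_2(2^n)$ is not really the mechanism by which the sporadic list is shown to be complete --- rather, one shows directly (again via CFSG and case analysis of quasisimple groups with the relevant defect groups) that any non-inertial block is Morita equivalent to one of the listed principal blocks. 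But as a high-level summary your sketch is faithful to the cited sources, and since the paper itself offers nothing more than the citation, there is no discrepancy to report.
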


In order to prove Corollary \ref{rank3corollary}, following Lemma \ref{lem:perfP} and Theorem \ref{thm:PicNorm} it remains to calculate the Picard groups for $SL_2(8)$ and $\Aut(SL_2(8))$. As mentioned earlier, the Picard group of the principal block of $J_1$ has been calculated by Eisele (and is trivial).

\subsection{The principal $2$-block of $SL_2(2^n)$}

\begin{lem}
\label{SL2(2^n)_Borel:lem}
Let $n\in\mathbb{N}$ and $G=(C_2)^n \rtimes C_{2^n-1}$, where $C_{2^n-1}$ acts transitively on the non-trivial elements of $(C_2)^n$. Then $\Pic(\cO((C_2)^n \rtimes C_{2^n-1})) = \cT(\cO((C_2)^n \rtimes C_{2^n-1})) \cong C_{2^n-1} \rtimes C_n$, where the elements of $C_{2^n-1}$ permute the linear characters of $G$. Further $\Picent(\cO G)=1$.
\end{lem}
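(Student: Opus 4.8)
The plan is to apply the machinery of Section~\ref{normal_defect} to $B:=\cO G$. First I would record the structure of $G$: here $D:=(C_2)^n=O_2(G)$ is a normal abelian Sylow $2$-subgroup, and since $C_{2^n-1}$ acts regularly on $D\setminus\{1\}$ this action is faithful and fixed-point-free, so $C_G(D)=D$, $Z(G)=1$ and $G/D\cong C_{2^n-1}$ is abelian (we take $n\geq 2$; for $n=1$ the group is just $C_2$). Counting conjugacy classes shows $G$ has exactly $2^n-1$ linear characters and one further irreducible character $\eta$, of degree $2^n-1$, with $\eta=\Ind_D^G(\mu)$ for any nontrivial $\mu\in\Irr(D)$; the $p$-modular reduction of $\eta$ is $\Ind_D^G(1)$, the regular Brauer character of $G/D$, which involves every irreducible Brauer character of $G$, so $\cO G$ is a single block, necessarily the principal one, with defect group $D$. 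As $D$ and $G/D$ are abelian and $Z(G)\cap D=1$, the second part of Corollary~\ref{decomp_number:cor} gives $\Pic(B)=\cT(B)$ immediately, so it remains only to compute $\cT(B)$.

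For this I would use the exact sequence $1\to\Out_D(A)\to\cT(B)\to\Out(D,\cF)$ from~(\ref{arr:exact}), $A$ a source algebra of $B$. Writing $E:=G/D\cong C_{2^n-1}$, regarded as a Singer cycle in $\Aut(D)=GL_n(2)$, Lemma~\ref{lem:OutOut} gives $\Out(D,\cF)\cong N_{\Aut(D)}(E)/E\cong\Out(G)$, and the normaliser of a Singer cycle in $GL_n(2)$ is the group $\Gamma L_1(2^n)=E\rtimes C_n$ of $\mathbb{F}_{2^n}$-semilinear maps, so $\Out(D,\cF)\cong C_n$. For the kernel: the bound of~\cite[14.9]{pu88} embeds $\Out_D(A)$ into $\Hom(E,k^\times)\cong C_{2^n-1}$, while Lemma~\ref{Out_D(A)subgroup:lem} applied with the normal subgroup $D$ (noting $N_G(D,b_D)=N_G(D)=G$, since $\cO D$ has a unique block) yields a subgroup of $\Out_D(A)$ isomorphic to $G/D\cong C_{2^n-1}$. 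Hence $\Out_D(A)\cong C_{2^n-1}$, realised by the automorphisms $g\mapsto\lambda(g)g$ with $\lambda\in\Hom(G/D,\cO^\times)$, i.e.\ by tensoring with the linear characters of $G$.

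To complete the computation I would show the map $\cT(B)\to\Out(D,\cF)$ is onto and split. Identifying $G$ with $\mathbb{F}_{2^n}\rtimes\mathbb{F}_{2^n}^\times$, the Galois group $\mathrm{Gal}(\mathbb{F}_{2^n}/\mathbb{F}_2)\cong C_n$ acts on $G$, and the induced copy of $C_n$ in $\Aut(G)$ maps isomorphically onto $\Out(G)\cong\Out(D,\cF)$ by Lemma~\ref{lem:OutOut}; each such automorphism $\sigma$ gives a trivial-source bimodule ${}_\sigma\cO G$ (a permutation module over $G\times G$), so $\Out(D,\cF)$ is realised inside $\cT(B)$ and meets $\Out_D(A)$ trivially. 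Therefore $\cT(B)\cong\Out_D(A)\rtimes C_n\cong C_{2^n-1}\rtimes C_n$, with $C_n$ acting on $C_{2^n-1}\cong\Hom(\mathbb{F}_{2^n}^\times,\cO^\times)$ by the Frobenius twist (the action inside $\Gamma L_1(2^n)$), and with the $C_{2^n-1}$ permuting the $2^n-1$ linear characters of $G$ simply transitively. Finally $\Picent(B)=1$: as at the end of the proof of Theorem~\ref{thm:PicNorm} it suffices that every nontrivial element of $\cT(B)$ moves some irreducible character, which holds because tensoring with a nontrivial linear character moves the trivial character of $G$, while a nontrivial power of $\sigma$ acts nontrivially on $G/D\cong\mathbb{F}_{2^n}^\times$ (as $n\geq2$) and so permutes the linear characters of $G$ nontrivially.

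The genuinely routine parts are the conjugacy-class/character count and the fact that the normaliser of a Singer cycle in $GL_n(2)$ is $\Gamma L_1(2^n)$; the step requiring the most care is the last one, establishing that $1\to\Out_D(A)\to\cT(B)\to\Out(D,\cF)$ is split with surjective right-hand map, which I would handle exactly as above, by exhibiting $\Out(D,\cF)\cong\Out(G)$ inside $\cT(B)$ through the field automorphisms of $G$.
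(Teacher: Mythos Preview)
Your proof is correct and more detailed than the paper's. The paper simply invokes \cite[4.3]{bkl18}, which treats all groups $P\rtimes E$ with $P$ an abelian $p$-group, $E$ abelian and $[P,E]=P$ (the latter holds here since a Singer cycle has no nontrivial fixed points on $(C_2)^n$), and then observes that every nontrivial element of $\Pic(\cO G)$ moves some irreducible character. You instead unpack this special case using the paper's own machinery: Corollary~\ref{decomp_number:cor} for $\Pic=\cT$, Lemma~\ref{lem:OutOut} and the classical fact $N_{GL_n(2)}(\text{Singer})=\Gamma L_1(2^n)$ for $\Out(D,\cF)\cong C_n$, the Puig bound together with Lemma~\ref{Out_D(A)subgroup:lem} for $\Out_D(A)\cong C_{2^n-1}$, and the field automorphisms of $\mathbb{F}_{2^n}\rtimes\mathbb{F}_{2^n}^\times$ to split the sequence. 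This buys self-containment at the cost of length; the paper's one-line citation is shorter but relies on the reader having~\cite{bkl18} to hand.

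One minor remark on your $\Picent$ argument: as written you treat nontrivial elements of $C_{2^n-1}$ and pure powers of $\sigma$ separately, but not explicitly a product $(\lambda,\sigma)$ with both factors nontrivial. This is easily fixed: if such a product fixed every linear character then evaluating at the trivial character forces $\lambda=1$, reducing to the case you already handled. The paper is no more careful on this point, so this is not a genuine gap.
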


\begin{proof}
This follows from~\cite[4.3]{bkl18}. That $\Picent(\cO G)=1$ follows since every nontrivial element of $\Pic(\cO G)$ induces a nontrivial permutation of $\Irr(G)$.
\end{proof}

The calculation of the Picard group of the principal $2$-block of $SL_2(2^n)$ is a generalisation of the arguments in~\cite{bkl18} for $A_5 \cong SL_2(4)$.

\begin{prop}\label{prop:sl2}
Let $B=B_0(\mathcal{O}SL_2(2^n))$, where $n>1$. Then $\Pic(B) = \cT(B) \cong C_n$ and $\Picent(B)=1$.
\end{prop}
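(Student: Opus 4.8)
The plan is to follow the template used for $A_5 \cong SL_2(4)$ in~\cite{bkl18}, adapting it to general $n$. Let $G = SL_2(2^n)$, let $D$ be a Sylow $2$-subgroup (the elementary abelian group of order $2^n$ sitting inside a Borel $H = D \rtimes C_{2^n-1}$), and let $\cF$ be the fusion system of $B$. Since $G$ has a strongly $2$-embedded subgroup $N_G(D) = H$, the fusion system $\cF$ on $D$ is the one arising from $H$, and by Lemma~\ref{lem:OutOut} we have $\Out(D,\cF) \cong N_{\Aut(D)}(C_{2^n-1})/C_{2^n-1}$. Identifying $D$ with the additive group of $\mathbb{F}_{2^n}$ and $C_{2^n-1}$ with the multiplicative group acting by scalar multiplication, the normalizer is $\Gamma L_1(2^n) = \mathbb{F}_{2^n}^\times \rtimes \mathrm{Gal}(\mathbb{F}_{2^n}/\mathbb{F}_2)$, so $\Out(D,\cF) \cong C_n$, the Galois group. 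This immediately bounds $\cL(B)$ via the exact sequence~(\ref{arr:exact}): since $D$ is abelian with $\mathfrak{foc}(D) = D$ (the action of $C_{2^n-1}$ is transitive on nontrivial elements, so the focal subgroup is everything), $\Hom(D/\mathfrak{foc}(D),\cO^\times) = 1$, and hence $\cL(B)$ fits in $1 \to \Out_D(A) \to \cL(B) \to C_n$.

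Next I would pin down $\Out_D(A)$. Since $B = B_0(\cO G)$ is its own source algebra in the relevant sense (or by directly analysing summands of $\cO G i \otimes_{\cO D} i\cO G$ as in the discussion before Lemma~\ref{factorise_source_algebra:lemma}), and since the Borel block $B_0(\cO H)$ has $\Out_D(\cO H) = 1$ by Lemma~\ref{SL2(2^n)_Borel:lem} together with the exact sequence there — more precisely, the inertial quotient $C_{2^n-1}$ has no nontrivial homomorphisms to $k^\times$ that extend, since $\Out_D(A)$ embeds in $\Hom(E, k^\times)$ by~\cite[14.9]{pu88} and one checks these don't lift — one gets $\Out_D(A) = 1$. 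Actually the cleaner route: by~\cite[14.9]{pu88} $\Out_D(A)$ embeds in $\Hom(E,k^\times)$ where $E \cong C_{2^n-1}$, and since $|E|$ is odd and $k$ has characteristic $2$, $\Hom(E,k^\times)$ could be nontrivial — so here I actually need the stronger structural input that for the principal block the only graded automorphisms of the source algebra fixing $D$ are inner; this follows because such an automorphism would have to commute with the action of $G$ on $\cO G i$ up to the grading, and for $SL_2(2^n)$ the outer automorphism group contributes only the field automorphisms. So $\Out_D(A) = 1$ and therefore $\cL(B) \hookrightarrow C_n$.

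Then I would show the bound is attained and that every element of $\Pic(B)$ in fact lies in $\cT(B)$. The field (Frobenius) automorphism $\varphi$ of $\mathbb{F}_{2^n}$ induces an automorphism of $G = SL_2(2^n)$ of order $n$, giving an element ${}_\varphi B \in \cT(B)$ (it has trivial source, being realised by a group automorphism stabilizing $D$), and these generate a copy of $C_n$ mapping isomorphically onto $\Out(D,\cF)$. So $\cT(B) \supseteq C_n$ and $\cL(B) \cong C_n$. For the reverse inclusion $\Pic(B) = \cL(B)$: I would argue that any $M \in \Pic(B)$ can, after composing with a power of ${}_\varphi B$, be assumed to fix the fusion system pointwise, hence induce a permutation of $\Irr(B)$ fixing the (unique up to the obvious action) structure; then use Proposition~\ref{prop:indmor}/Proposition~\ref{Weiss_consequence:prop}-style reasoning, or more likely a direct argument: $B$ has a stable basis / the perfect self-isometry group of $B$ is controlled, and by a character-theoretic computation using the known character table of $SL_2(2^n)$ (principal series, discrete series, Steinberg) together with Lemma~\ref{lem:roots_in_o}, any Morita auto-equivalence must permute $\Irr(B)$ in a way realised by a field automorphism. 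The upshot is $\Pic(B) = \cT(B) = \cL(B) \cong C_n$, and $\Picent(B) = 1$ because every nonidentity power of ${}_\varphi B$ moves some irreducible character (e.g. it acts nontrivially on the principal series characters parametrized by $\Irr(C_{2^n-1})$ modulo inversion, since the Galois action on $\mathbb{F}_{2^n}^\times$ is nontrivial).

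The main obstacle I expect is the last step — ruling out exotic elements of $\Pic(B)$ not coming from field automorphisms, i.e. proving $\Pic(B) = \cL(B)$. Unlike the normal-defect cases handled by Weiss' criterion in Section~\ref{normal_defect}, here $D$ is not normal in $G$, so Proposition~\ref{Weiss_consequence:prop} does not apply directly; one must instead exploit the strongly $2$-embedded subgroup to reduce to the Borel subgroup (where $D$ \emph{is} normal and Lemma~\ref{SL2(2^n)_Borel:lem} applies), presumably via a Green-correspondence or Brauer-correspondence argument showing that a trivial-source candidate for $M$ restricts to a known bimodule over $\cO H$, and then combining with the character-theoretic constraints. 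Getting this reduction clean — and in particular handling the characters outside the principal series — is where the real work lies; the rest is bookkeeping with $\Aut$ and $\Out$ of small groups.
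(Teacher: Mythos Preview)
Your overall architecture matches the paper's: compute $\Out(D,\cF) \cong C_n$, show $\Out_D(A)=1$, realise $C_n$ via field automorphisms, and reduce an arbitrary $M \in \Pic(B)$ to the Borel block $b=\cO H$ to force $M \in \cT(B)$. But there are two genuine gaps, and they are exactly the points where the paper invokes specific tools you have not identified.

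First, your claim that $\Out_D(\cO H)=1$ is false: by Lemma~\ref{SL2(2^n)_Borel:lem} we have $\Pic(b)\cong C_{2^n-1}\rtimes C_n$, and the $C_{2^n-1}$ factor \emph{is} $\Out_D(\cO H)$, consisting of the equivalences given by tensoring with linear characters of $H$. So the embedding $\Out_D(A)\hookrightarrow \Hom(E,k^\times)\cong C_{2^n-1}$ gives no information on its own, and your hand-wave about outer automorphisms does not close this. The paper's argument for $\Out_D(A)=1$ goes through the reduction below and uses that any nontrivial element of $\Out_D(\cO H)$ moves the trivial module.

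Second, the reduction to $b$ is not carried out by character theory or perfect isometries. The three ingredients you are missing are: (i) since $D$ is a trivial intersection subgroup, induction and restriction between $H$ and $G$ give a \emph{splendid} stable equivalence of Morita type $F:\mod(b)\to\mod(B)$; (ii) by Koshita's description of the quiver of $kG$~\cite{ko94}, the trivial $kB$-module is the unique simple $V$ with $\Ext^1_{kG}(V,W)\neq 0$ for three distinct simples $W$, so any $M\in\Pic(B)$ fixes it; (iii) by Carlson--Rouquier~\cite[Corollary~3.3]{cr00} (which needs the inertial quotient cyclic, as it is here), a stable auto-equivalence of $b$ fixing the trivial module fixes every simple, hence is a Morita equivalence. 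Thus $M_H:=F^*\circ M\circ F$ lands in the subgroup of $\Pic(b)$ fixing the trivial module, which is the $C_n$ coming from field automorphisms; since $F$ and $M_H$ have trivial source, so does $M$. This simultaneously gives $\Pic(B)=\cT(B)$ and, by the contradiction noted above, $\Out_D(A)=1$. Your character-theoretic alternative has no clear route to either conclusion, because Weiss' criterion is unavailable when $D$ is not normal in $G$.
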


\begin{proof}
Let $G=SL_2(2^n)$, where $n > 1$, and let $B$ be the principal block of $\mathcal{O}G$. Write $q=2^n$. Let $D \in \Syl_2(G)$, so $D$ is elementary abelian of order $q$ and $H:=N_G(D)=D \rtimes E$ where $E \cong C_{q-1}$. Let $b=\cO H$. By Lemma \ref{SL2(2^n)_Borel:lem} $\Pic(b) = \cT(b)=C_{q-1} \rtimes C_n$, where the elements of $C_{q-1}$ permute the linear characters of $H$ and $C_n$ acts as group automorphisms obtained from $\Aut(G)$ (note that $N_{\Aut(G)}(D) \cong H \rtimes C_n$). Now $D$ is a trivial intersection subgroup of $G$, so induction and restriction gives a splendid stable equivalence of Morita type $F: \mod (b) \rightarrow \mod(B)$. Let $M \in \Pic(B)$. By~\cite{ko94} the trivial module for $B \otimes_\cO k$ is distinguished as the unique simple module $V$ with $\Ext_{kG}^1(V,W) \neq 0$ for three distinct simple $kG$-modules $W$. It follows that $M_H:=F^* \circ M \circ F:\mod(b) \rightarrow \mod(B)$ gives a stable auto-equivalence of Morita type preserving the trivial $b$-module. By~\cite[Corollary 3.3]{cr00} $M_H$ must then preserve every simple $b$-module, and so it is a Morita equivalence. The subgroup of $\Pic(b)$ consisting of bimodules fixing the trivial module is the subgroup of bimodules induced by group automorphisms of $G$ is described above, and is isomorphic to $C_n$. Since $M_H \in \cT(b)$ and we have a splendid stable equivalence between $b$ and $B$, it follows that $M \in \cT(B)$. We now apply the description of $\cT(B)$ in (\ref{arr:exact}). Let $A$ be a source algebra for $B$. Suppose that $M \in \Out_D(A)$ is nontrivial. Then, as $F$ is induced by a trivial source module, $M_H \in \Out_D(\cO H)$ and is also nontrivial. Hence $M_H$ permutes the simple modules of $H$, with no fixed points. In particular $M_H$ does not fix the trivial module, a contradiction. Hence $\Out_D(A)=1$. Now $\Out(D,\mathcal{F}) \cong C_n$. Since $\Out(G) \cong C_n$, the result follows, noting that every such automorphism acts nontrivially on $\Irr(B)$ so $\Picent(B)=1$.
\end{proof}

\subsection{The principal $2$-block of $\Aut(SL_2(8))$}

The calculation of $\Pic(B_0(\cO\Aut(SL_2(8))))$ is complicated by the fact that we may no longer use~\cite[Corollary 3.3]{cr00}, which requires the inertial quotient to be cyclic. Instead we must show directly that any Morita equivalence of $B_0(\cO\Aut(SL_2(8)))$ restricted to the principal block $b$ of the normalizer of a Sylow $2$-subgroup gives rise to a auto-equivalence of $b$ permuting the simple modules.

\begin{prop}
\label{Aut(SL_2(8)):prop}
Let $B=B_0(\cO \Aut(SL_2(8)))$ or $B_0(\cO {}^2G_2(3^{2m+1}))$ for $m \geq 1$. Then $\Pic(B) = \cT(B) \cong C_3$ and $\Picent(B)=1$.
\end{prop}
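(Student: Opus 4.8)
The plan is to follow the same strategy as in Proposition~\ref{prop:sl2}, passing from $B$ to the principal block $b$ of $N_G(D)$ via the splendid stable equivalence of Morita type coming from the Brauer correspondence (valid because $D$ is a trivial intersection subgroup of $G=\Aut(SL_2(8))$, and likewise for ${}^2G_2(3^{2m+1})$). Here $D$ is elementary abelian of order $8$ with inertial quotient $E\cong C_7\rtimes C_3$, so $b\cong\cO(D\rtimes(C_7\rtimes C_3))$ up to the relevant considerations, and by Lemma~\ref{lem:OutOut} together with the computations preceding Theorem~\ref{thm:PicNorm} (or a direct count) we have $\Pic(b)=\cT(b)$, with $\Out_D(\cO b)\cong C_3$ coming from tensoring with linear characters and $\Out(D,\cF)$ a $2$-group. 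The key point is to show that any $M\in\Pic(B)$, transported to a stable self-equivalence $M_H:=F^*\circ M\circ F$ of $b$, is in fact a Morita equivalence, i.e.\ that it permutes the simple $kb$-modules.

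First I would use the fact that the trivial $kB$-module is characterised among simple $kB$-modules by $\Ext^1$-properties (as in~\cite{ko94}: it is the unique simple $V$ with $\Ext^1_{kG}(V,W)\neq0$ for the appropriate number of distinct simple $W$), so that $M$ fixes the trivial module and hence $M_H$ fixes the trivial $kb$-module. The obstacle, flagged in the text before the statement, is that we cannot invoke~\cite[Corollary 3.3]{cr00} since the inertial quotient $C_7\rtimes C_3$ is non-cyclic, so "fixes the trivial module" does not immediately give "fixes all simple modules". I would resolve this by direct analysis of $b$: there are three non-trivial simple $kb$-modules, coming from the three non-trivial linear characters of $C_7\rtimes C_3$ (or equivalently the non-trivial characters of $C_7\rtimes C_3/[\,\cdot\,,\,\cdot\,]\cong C_3$), and one should check that these are distinguished from the trivial module and permuted among themselves by any stable self-equivalence of Morita type — for instance via dimension/ $\Ext$-quiver considerations for $\cO(D\rtimes E)$, whose structure is completely explicit. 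Once $M_H$ is shown to permute all simple $kb$-modules it is a Morita equivalence, hence $M_H\in\cT(b)$, and since $F$ is induced by a trivial source (permutation) bimodule it follows that $M\in\cT(B)$.

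Having reduced to $\cT(B)$, I would finish with the exact sequence~(\ref{arr:exact}). For the source algebra $A$ of $B$ I claim $\Out_D(A)=1$: if $0\neq M\in\Out_D(A)$ then (again using that $F$ is a trivial source induction) $M_H\in\Out_D(\cO b)\cong C_3$ is non-trivial, so $M_H$ acts on the simple $kb$-modules by a non-trivial tensor action, in particular without fixing the trivial module, contradicting the previous paragraph. Hence $\cT(B)$ embeds in $\Out(D,\cF)$. Finally I would identify $\Out(D,\cF)$: for $G=\Aut(SL_2(8))$ we have $\Out(G)\cong C_3$ (the graph-field automorphisms of $SL_2(8)$), and $\Out(D,\cF)\cong N_{\Aut(D)}(E)/E$ where $E\cong C_7\rtimes C_3\le GL_3(2)=\Aut(D)$; since $N_{GL_3(2)}(C_7\rtimes C_3)=C_7\rtimes C_3$ and the field automorphism contributes a $C_3$, one gets $\Out(D,\cF)\cong C_3$, realised inside $\Pic(B)$ by the group automorphisms of $G$. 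The same count applies to ${}^2G_2(3^{2m+1})$, whose Sylow $2$-normalizer and outer automorphism group have the same shape. Therefore $\Pic(B)=\cT(B)\cong C_3$, and since each of these automorphisms acts non-trivially on $\Irr(B)$ we conclude $\Picent(B)=1$. The main obstacle is the middle step — replacing the use of~\cite[Corollary 3.3]{cr00} by a hands-on argument that stable self-equivalences of $b=\cO(D\rtimes(C_7\rtimes C_3))$ fixing the trivial module are Morita equivalences.
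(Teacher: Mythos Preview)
Your proposal contains several genuine errors that derail the argument.

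\textbf{The trivial module is not singled out.} The block $B=B_0(\cO\Aut(SL_2(8)))$ has three one-dimensional simple modules $I,1,1^*$ (coming from the linear characters of $G/N\cong C_3$ with $N=SL_2(8)$), and these are permuted transitively by tensoring with linear characters of $G/N$. In particular they all have the same $\Ext^1$-pattern, so the trivial module cannot be characterised as in~\cite{ko94} (that reference is for $SL_2(2^n)$, not for $\Aut(SL_2(8))$). Thus you cannot conclude that an arbitrary $M\in\Pic(B)$ fixes the trivial module, and your downstream argument for $\Out_D(A)=1$ collapses. The paper instead shows, from the decomposition matrix, only that $M$ permutes the three one-dimensional simples and fixes the simples $6$ and $12$; it then uses explicit information about $\Res^G_H(6)$, $\Res^G_H(12)$ and the Loewy structure of $P_{12}$ (from Landrock--Michler) to determine the Green correspondents and deduce that $M_H$ permutes all simple $kb$-modules.

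\textbf{You have $\Out_D(A)$ and $\Out(D,\cF)$ the wrong way round.} Since $C_7\rtimes C_3$ is a maximal (non-normal) subgroup of $GL_3(2)=\Aut(D)$, one has $N_{GL_3(2)}(C_7\rtimes C_3)=C_7\rtimes C_3$ and hence $\Out(D,\cF)=1$; there is no extra ``field automorphism'' contribution, because that $C_3$ is already the $C_3$ inside $E=C_7\rtimes C_3$. (Relatedly, $\Out(\Aut(SL_2(8)))=1$, not $C_3$.) On the other hand, Lemma~\ref{Out_D(A)subgroup:lem} applied with $N=SL_2(8)\lhd G$ gives $C_3\leq\Out_D(A)$, and since $\Out_D(A)$ embeds in $\Hom(C_7\rtimes C_3,k^\times)\cong C_3$ we get $\Out_D(A)\cong C_3$. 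So the $C_3$ in $\cT(B)$ sits in $\Out_D(A)$, not in $\Out(D,\cF)$.

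\textbf{The count of simple $kb$-modules is off.} The group $C_7\rtimes C_3$ has five irreducible characters (three linear, two of degree $3$), so $b=\cO(D\rtimes(C_7\rtimes C_3))$ has five simple modules $1_1,1_2,1_3,3_1,3_2$; there are four non-trivial ones, not three.

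\textbf{For ${}^2G_2(3^{2m+1})$} the paper does not re-run the argument but transports the result via Okuyama's splendid Morita equivalence with $B_0(\cO\Aut(SL_2(8)))$, which identifies both $\Pic$ and $\cT$.
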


\begin{proof}
Let $G=\Aut(SL_2(8))$ and $N \leq G$ with $N \cong SL_2(8)$. Let $D \in \Syl_2(G)$ and write $H=N_G(D)$. We have $H \cong D \rtimes (C_7 \rtimes C_3)=(D \rtimes C_7) \rtimes C_3$. Write $B$ for the principal block of $\cO G$ and $b$ for the principal block of $\cO H$.

Let $M \in \Pic(B)$. Since $D$ is a trivial intersection subgroup of $G$, induction and restriction gives a splendid stable equivalence of Morita type $F:\mod(b) \rightarrow \mod(B)$. Hence $M_H:=F^* \circ M\circ F:\mod(b) \rightarrow \mod(b)$ induces a stable auto-equivalence of Morita type of $b$. We show that this stable equivalence sends simple modules to simple modules.

Using GAP, with output given in~\cite{GAP_decomp}, $B$ has irreducible characters of degrees $1,1,1,7,7,7,21,27$, irreducible Brauer characters of degrees $1,1,1,6,12$ (which we write $I$, $1$, $1^*$, $6$, $12$) and decomposition matrix
$$\left( \begin{array}{ccccc}
1 & 0 & 0 & 0 & 0 \\
0 & 1 & 0 & 0 & 0 \\
0 & 0 & 1 & 0 & 0 \\
1 & 0 & 0 & 1 & 0 \\
0 & 1 & 0 & 1 & 0 \\
0 & 0 & 1 & 1 & 0 \\
1 & 1 & 1 & 1 & 1 \\
1 & 1 & 1 & 2 & 1 \\
\end{array} \right).$$

From examination of the decomposition matrix, $M \otimes k$ permutes the modules of dimension one, fixing the other simple modules.

$b$ has simple modules of dimensions $1,1,1,3,3$, labelled $1_1,1_2,1_3,3_1,3_2$. We may choose our labelling so that $I$, $1$, $1^*$ has Green correspondent $1_1$, $1_2$, $1_3$ respectively. By~\cite[Theorem 5.3]{lm80} $\Res_H^G(6)= \begin{array}{c} 3_1 \\ 3_2 \\ \end{array}$ and
$$\Res_H^G(12)= \begin{array}{ccc}
 & 3_2 & \\
3_2 & & 3_1 \\
 & 3_1 & \\
 \end{array},$$ where the Loewy and socle series coincide. By Frobenius reciprocity $\Ind_H^G(3_1)$ has head $12$ and socle $6$, and $\Ind_H^G(3_2)$ has head $6$ and socle $12$.

Write $P_S$ for the projective cover of the simple module $S$. By~\cite[Theorem 4.1]{lm80} $P_{12}$ has Loewy and socle series
$$\begin{array}{ccc}
 & 12 & \\
 & 6 & \\
1 & I & 1^* \\
 & 6 & \\
1 & I & 1^* \\
 & 6 & \\
 & 12 & \\
 \end{array}.$$
Since $P_{12}$ is the projective cover of $\Ind_H^G(3_1)$ and injective hull of $\Ind_H^G(3_2)$, and since both induced modules have dimension $27$, we must have $\Ind_H^G(3_1) \cong P_{12} / \rad^4(P_{12})$ and $\Ind_H^G(3_2) \cong \rad^3(P_{12})$. We deduce that $M$ fixes the Green correspondents of $3_1$ and $3_2$, and permutes those of the linear modules, so $M_H$ permutes the simple modules. It follows by~\cite[Theorem 4.14.10]{li18} that $M_H$ induces a Morita auto-equivalence of $b$. By Theorem \ref{thm:PicNorm} $\Pic(b)=\cT(b)$, so $M_H$ has trivial source as a bimodule. Since $F$ is induced by a trivial source bimodule it follows that $M$ also has trivial source, i.e., $M \in \cT(B)$.

We now calculate $\cT(B)$. Let $A$ be a source algebra for $B$. Now $\Out_D(A)$ is isomorphic to a subgroup of $\Hom_k(C_7 \rtimes C_3,k^\times) \cong C_3$. It follows from Lemma \ref{Out_D(A)subgroup:lem} that $\Out_D(A) \cong C_3$. Since $C_7 \rtimes C_3$ is a non-normal maximal subgroup of $GL_3(2)$ it follows that $\Out(D,\mathcal{F})=N_{GL_3(2)}(C_7 \rtimes C_3)/(C_7 \rtimes C_3)=1$. Hence $\cT(B) \cong C_3$ as required. Note that each element of $\Pic(B)$ induces a nontrivial permutation of $\Irr(B)$, so $\Picent(B)=1$.

Now suppose that $B=B_0(\cO {}^2G_2(3^{2m+1}))$ for some $m \geq 1$. By~\cite[Example 3.3]{ok97} there is a splendid Morita equivalence between $B_0(\cO {}^2G_2(3^{2m+1}))$ and $B_0(\Aut(SL_2(8))$, i.e., one induced by a trivial source bimodule, so that $\cT(B_0(\cO {}^2G_2(3^{2m+1}))) \cong \cT(B_0(\cO \Aut(SL_2(8))))$ and $\Pic(B_0(\cO {}^2G_2(3^{2m+1}))) \cong \Pic(B_0(\cO \Aut(SL_2(8))))$
\end{proof}


\begin{ack*} We thank the referee for their careful reading of the manuscript and for their helpful comments and suggestions.
\end{ack*}

\end{document}